\documentclass[11pt,a4paper,twoside]{article}
\usepackage{hyperref}
\usepackage{latexsym,amsfonts,amsmath, amsthm, amssymb}
\usepackage{fullpage}
\usepackage{xcolor,bm, bbm}
\usepackage[utf8x]{inputenc}
\usepackage{array}
\usepackage{mathrsfs}
\usepackage{faktor}
\usepackage{fourier}
% % % % % % % % % % % % % % % % % % % % % % % % % % % % % %
% % % % % % % % % % % % % % % % % % % % % % % % % % % % % %

%\definecolor{new}{rgb}{0.13.0.33,0.43}

\newcommand{\R}{\mathbb R}
\newcommand{\N}{\mathbb N}

\newcommand{\E}{\mathbb E}
\newcommand{\Pro}{\mathbb P}

\newcommand{\Var}{\mathrm{Var}}
\newcommand{\Uni}{\mathrm{Unif}}

\newcommand{\vol}{\mathrm{vol}}

\def\dint{\textup{d}}
\newcommand{\SSS}{\ensuremath{{\mathbb S}}}

\newcommand{\B}{\ensuremath{{\mathbb B}}}

\newcommand{\eps}{\varepsilon}

 %JAN

\newcommand{\ii}{{\rm{i}}}

%%%%%%%%%%%%%%%%%%%%%%%%%

\DeclareMathOperator{\id}{Id}

\DeclareMathOperator{\rate}{\mathbb I}
\DeclareMathOperator{\Tr}{Tr}

\renewcommand{\Re}{\operatorname{Re}}  %Realteil
  %Imaginärteil

%%%%%%%%%%%%%%%%%%%%%%%%%

%\newtheorem{thm}{Theorem}[section]
\newtheorem{thm}{Theorem}[section]

\newtheorem{cor}[thm]{Corollary}
\newtheorem*{cor*}{Corollary}
\newtheorem{lemma}[thm]{Lemma}
\newtheorem{df}[thm]{Definition}
\newtheorem{proposition}[thm]{Proposition}

\newtheorem{thmalpha}{Theorem}

{
\theoremstyle{definition}

\newtheorem{rmk}[thm]{Remark}

}
% % % % % % % % % % % % % % %boldface

%% % % % % % % % % % % % % % % % % % % % % % % % %Nomenclatur
%\usepackage{nomencl}
%\makenomenclature
%% % % % % % % % % % % % % % % % % % % % % % % % %

\allowdisplaybreaks
%\setlength{\parindent}{0pt}
%%%%%%%%%%%%%%%%%%%%%%%%%%%%%%%%%%%%%%%%%%%%%

\begin{document}

\title
%[Large deviations for orthogonal matrices and random projections]
{\bf Large deviations for random matrices in the orthogonal group and  Stiefel manifold with applications to random projections of product distributions}
%Large deviations for orthogonal matrices and random projections of product distributions

\medskip

\author{Zakhar Kabluchko and Joscha Prochno}

%\thanks{~}

%\keywords{}
%\subjclass{}
%% NB There should be only one primary classification, and zero or
%more secondary classifications.

\date{}

\maketitle

\begin{abstract}
\small

We prove large deviation principles (LDPs) for random matrices in the orthogonal group and Stiefel manifold, determining both the speed and  good convex rate functions that are explicitly given in terms of certain log-determinants of trace-class operators and are finite on the set of Hilbert-Schmidt operators $M$ satisfying $\|MM^*\|<1$. As an application of those LDPs, we determine the precise large deviation behavior of $k$-dimensional random projections of high-dimensional product distributions using an appropriate interpretation in terms of point processes, also characterizing the space of all possible deviations. The case of uniform distributions on $\ell_p$-balls, $1\leq p \leq \infty$, is then considered and reduced to appropriate product measures. Those applications generalize considerably the recent work [Johnston, Kabluchko, Prochno: \emph{Projections of the uniform distribution on the cube -- a large deviation perspective}, Studia Mathematica 264 (2022), 103-119].
%Our proofs resemble a lively interplay of functional analytic, geometric and probabilistic ideas.
%Fix $k\in \N$ and consider a random orthonormal $k$-frame of vectors in $\mathbb R^n$ distributed according to the Haar measure on the Stiefel manifold $\mathbb V_{k,n}$ of all such frames, $n\geq k$. Let $A_n$ be a random $k\times n$ matrix  whose rows are the vectors of the frame.  We show that $(A_n)_{n\geq k}$ satisfies a large deviation principle on the space $[-1,1]^{k\times \infty}$ at speed $n$ with an explicit good rate function. The proof is based on a reduction to Gaussian random matrices and a projective limit approach. As an application, we show that random $k$-dimensional projections of the sequence of uniform distributions on cubes $[-1,1]^n$, $n\geq k$, satisfy a large deviation principle at speed $n$ with an explicit good rate function. The latter generalizes the main result in [Johnston, Kabluchko, Prochno: \emph{Projections of the uniform distribution on the cube -- a large deviation perspective}, Studia Mathematica (2021+)] to multi-dimensional random projections. In fact, we also prove a corresponding result for the uniform distribution on $\ell_p$-balls, when $1\leq p<\infty$, and for random projections of product measures on $\mathbb R^n$. Last but not least, we obtain a large deviation principle for random Haar orthogonal matrices on the space $[-1,1]^{\infty \times \infty}$, determining an explicit good rate function which is finite on the unit ball in the  Hilbert-Schmidt norm.
\medspace
\vskip 1mm
\noindent{\bf Keywords}. {Large deviation principle, matrix variate distributions, orthogonal group, product measure, projective limit, random matrix, random projection, Stiefel manifold}\\
{\bf MSC 2010}. Primary 52A23, 60F10, 60B20; Secondary 52A22, 46B06.
\end{abstract}

\tableofcontents

% % % % % % % % % % % % % % % % % % % % % % % % % % % % % % % % %
\section{Introduction and main results}
% % % % % % % % % % % % % % % % % % % % % % % % % % % % % % % % %

% % % % % % % % % % % % % % % % % % % % % % %
\subsection{Introduction}
% % % % % % % % % % % % % % % % % % % % % % %

The systematic study of large random matrices goes back to Wishart and his work on the statistical analysis of large samples \cite{W1928}. Ever since, random matrices have entered numerous areas of mathematics and applied sciences beyond probability theory and statistics, among others asymptotic geometric analysis, combinatorics, number theory, operator theory, nuclear physics, quantum field theory or theoretical neuroscience, just to mention a few. In the last  decades, significant effort has been made to understand their spectral statistics (particularly from a local point of view) and determining various universality properties; we refer the reader to \cite{ABDF2015} and \cite{AGZ2010} for more information.

In this work we are not interested in universality properties or the typical behavior of random matrices, but rather in their atypical behavior that is described by the theory of large deviations. There is quite some work in this direction, for instance dealing with large deviations for empirical measures of random matrices and we refer the reader to \cite{AGZ2010,guionnet_surv} and the references cited therein. The focus in this paper is actually a different, two-fold one. In the first part of the paper, we study the large deviation behavior of random matrices in the orthogonal group and Stiefel manifold and then apply our results in the second part to understand the atypical behavior of $k$-dimensional projections of high-dimensional product distributions and uniform distributions on $\ell_p$-balls. Those applications are motivated by problems concerning the asymptotic theory of convex bodies as studied in geometric functional analysis and high-dimensional probability theory. Indeed, the asymptotic theory of normed spaces, and the study of high-dimensional convex bodies in particular, has attracted considerable attention in the last decades due to its intimate connection to other mathematical and applied disciplines, and depicts a lively interplay of combinatorics, discrete and convex geometry, functional analysis and probability theory. Numerous powerful ideas, tools, and results have a probabilistic flavor and the study of random objects and random geometric quantities is a central part of the theory with important applications, for instance, in approximation theory, information-based complexity, or compressed sensing \cite{ALLPT2011, CGLP2012, FR2013, HKNPV2021, HPS2021, HPU2019, KS2020, MU2020}. Random matrices and random matrix techniques play a pivotal role in many of those applications and laws of large numbers or central limit theorems belong to the classical body of research
%, in particular in view of the study of concentration of measure phenomena,
and have been obtained in various situations, for instance,
\cite{AR2020, APT2019,ABP2003,ABBN2004,KPT2019_I,KPT2019_cube,K2007,MM2007,PPZ14,R2005,SS1991,Schmu2001} to name a few. The study of large deviations, more precisely of large deviation principles (LDPs) -- intensely investigated in probability theory and statistical mechanics since the $1960$s -- has only recently attracted attention in the asymptotic theory of convex bodies. Contrary to the universality in a central limit theorem, which restricts the information that can be retrieved, for instance, from lower-dimensional projections (recall that Klartag's central limit theorem \cite{K2007} says that most lower-dimensional marginals are close to being Gaussian), a large deviation principle is typically distribution dependent and therefore still carries information about the underlying distribution. In geometric terms, this allows one to distinguish between high-dimensional convex bodies via their lower dimensional projections; in the setting of $\ell_p$-balls, this was shown by Gantert, Kim, and Ramanan \cite{GKR2017} and Alonso-Guti\'errez, Prochno, and Th\"ale \cite{APT2018}. Moreover, an interesting connection between the study of large (and moderate) deviations for log-concave distributions and the famous Kannan--Lov\'asz--Simonovits conjecture was established in \cite{APT2020}. Other than that a variety of large deviation results have been obtained in the last five years, among others, \cite{GKR2016,KPT2019_sanov, KPT2019_II,KPT2019_cube,K2021, KLR2019, KR2018, KR2021, LR2020}. Beyond that, in \cite{KP2021} and the subsequent works \cite{AGP2020,JP2020}, it has been demonstrated how ideas and methods from large deviation theory, such as the maximum entropy principle, its relation to Gibbs measures, and Gibbs conditioning, allow one to lift classical results for $\ell_p$-balls to more general symmetric Banach spaces (similar ideas have recently been used by Barthe and Wolff \cite{BW2021} studying Orlicz spaces). In particular, this puts the frequently used Schechtman-Zinn probabilistic representation of the uniform distribution on an $\ell_p$-ball \cite{SchechtmanZinn} into a new perspective. Quite recently, Dadoun, Fradelizi, Gu\'edon, and Zitt used large deviation techniques to obtain a strong version of the variance conjecture for Schatten $p$-balls for $p>3$ \cite{DFGZ2021}.

Before being more specific and precise about what we prove in this paper, let us describe the work that motivated our study of large deviations for random matrices in the orthogonal group and Stiefel manifold. In \cite{JKP2021}, the authors studied the large deviation behavior of random projections of uniform distributions on cubes $[-1,1]^n$ as $n\to\infty$. To be more precise, let $\Theta^{(n)}$ be uniformly distributed on the Euclidean unit sphere $\SSS^{n-1}$. Consider the random probability measure $\mu_{\Theta^{(n)}}$ on $\R$ obtained by projecting the uniform distribution on $[-1,1]^n$ to the line spanned by $\Theta^{(n)}$. More precisely, we put
\[
\mu_{\Theta^{(n)}}(A) := \frac 1 {2^n} \int_{[-1,1]^n}  \mathbb 1_{\{\langle u, \Theta^{(n)} \rangle \in A\}}
\,\dint u
\]
for Borel subsets $A$ of $\mathbb{R}$. Then, as a simple application of Lindeberg's CLT, $\mu_{\Theta^{(n)}}$ converges weakly to a Gaussian distribution of variance $1/3$ as $n\to\infty$. In fact, Klartag's celebrated central limit theorem~\cite{K2007} states that the Gaussian behavior is universal and holds if the cube is replaced by a general $n$-dimensional isotropic convex body, as $n\to\infty$. On the other hand, the large deviation behavior  is not universal. Indeed, the main result in~\cite[Theorem A]{JKP2021} establishes an LDP for the sequence $\mu_{\Theta^{(n)}}$, $n\in\N$, at speed $n$ and with an explicit good rate function $\rate$ defined on the space $\mathcal M_1(\R)$ of probability measures on $\R$ by
\[
\rate\big(\nu(\alpha)\big) := - \frac{1}{2} \log \big( 1 - ||\alpha||_2^2\big),
\]
where $\nu(\alpha)$ is the law of the random variable
\begin{align*}
\sqrt{1 - ||\alpha||_2^2 } \frac{Z}{\sqrt 3} + \sum_{ k = 1}^\infty \alpha_k U_k,
\end{align*}
with $Z$ being a standard Gaussian independent of  $U_1,U_2,\ldots$ which are i.i.d.\ $\Uni[-1,1]$, and $\alpha=(\alpha_{i})_{i\in\N}$ is a non-increasing sequence of non-negative reals with $||\alpha||_2<1$. Whenever $\nu$ is not of the form $\nu(\alpha)$, we have $\rate(\nu)= +\infty$. Replacing the uniform distribution on the cube $[-1,1]^n$ by the uniform distribution on its vertices leads to a different rate function, as was shown in~\cite[Theorem~B]{JKP2021}, thus demonstrating the lack of universality in the large deviations behavior.

In this paper, we put this into a much wider perspective, significantly generalizing the results obtained in~\cite{JKP2021} by proving, on the one hand, a multivariate version of the main result in \cite{JKP2021} for product distributions and, on the other, by proving a  corresponding result for the whole class of uniform distributions on $\ell_p$-balls. As already pointed out, this requires understanding the precise large deviation behavior of random orthogonal matrices and we describe our main results below.

Let us point out that large deviation principles for (random) projections of random points in $\ell_p$-balls accompanied by  LDPs for random Stiefel matrices have been investigated by Kim and Ramanan in~\cite{KR2021}. Their setting differs from ours. To be more specific, Kim and Ramanan~\cite[Theorems~2.4, 2.6, 2.7]{KR2021} essentially consider a random vector $X^{(n)}$ distributed uniformly on an $\ell_p^n$-ball and prove an LDP on $\R^k$ for the projection of the \textit{random point} $X^{(n)}$ onto a uniform, random $k$-dimensional subspace, as $n\to\infty$. We consider random projections of the \textit{probability distribution} of $X^{(n)}$ and prove an LDP on the space of probability measures on $\R^k$ rather than on the space $\R^k$ itself. Although both questions sound similar, the techniques used in the proofs are completely different (as are the rate functions) and it seems that neither LDP implies the other by contraction or any other elementary transformation argument. In any case, both questions require the study of certain types of LDPs for random Stiefel matrices. Kim and Ramanan~\cite[Theorem~2.8]{KR2021} prove an LDP for the empirical measure of the columns of a  random Stiefel $k\times n$-matrix on the space of probability measures on $\R^k$ (as $n\to\infty$), while we prove an LDP for the random Stiefel matrix itself on the space of $k\times \infty$-matrices. In some sense, Kim and Ramanan~\cite[Theorem~2.8]{KR2021} study atypical empirical measures of the columns of a matrix (in the spirit of Sanov's theorem), while we study atypical matrices themselves.

% % % % % % % % % % % % % % % % % % % % % %
\subsection{Main results}
% % % % % % % % % % % % % % % % % % % % % %

We shall denote by $\mathbb V_{k,n}$ the Stiefel manifold of orthonormal $k$-frames in $\R^n$, for $n\in \N$ and $k\in \{1,\ldots,n\}$.  The elements of $\mathbb V_{k,n}$ are orthonormal $k$-tuples $(u_1,\ldots,u_k)$ of vectors from $\R^n$. We agree to identify any such tuple with a matrix $V\in \R^{k\times n}$ whose rows are the vectors $u_1,\ldots,u_k$. Then, the orthonormality condition can be expressed as $VV^* = \id_{k\times k}$. The Stiefel manifold $\mathbb V_{k,n}$ is endowed  with its natural Haar (or uniform) probability measure $\mu_{k,n}$; see Section~\ref{subsec:stiefel_manifold} for more details. The notion of a large deviation principle is introduced in Definition \ref{def:ldp}. In what follows, the following set of matrices is of special interest,
\[
\mathcal R_2^{k\times \infty} := \Big\{A=(A_{ij})_{i,j=1}^{k,\infty}\in\R^{k\times\infty}\,:\, (A_{ij})_{j\in\N}\in\ell_2, \,i=1,\dots,k \Big\},
\]
i.e., $\mathcal R_2^{k\times \infty}$ is the set of all $k\times \infty$ matrices whose rows belong to the space $\ell_2$ of square summable sequences.

%\subsubsection*{LDP's for orthonormal frames}

\medskip
\noindent
\textbf{LDP for Stiefel matrices.}
Our first result is a large deviation principle for a random matrix $V_{k,n}$ distributed according to $\mu_{k,n}$ which is valid in the regime when $k\in \N$ is fixed and $n\to\infty$. We shall abuse notation and identify $V_{k,n}$ with an infinite $k\times \infty$-matrix obtained from $V_{k,n}$ by adding infinitely many zero columns. Therefore, we can consider $V_{k,n}$ as a random element with values in the space $[-1,1]^{k\times \infty}$ of all $k\times \infty$-matrices whose entries have absolute values $\leq 1$. Endowed with the topology of entrywise convergence, the space  $[-1,1]^{k\times \infty}$ becomes compact according to Tikhonov's theorem.

\begin{thmalpha}[LDP for Stiefel matrices]\label{thm:ldp for stiefel manifold}
Fix $k\in\N$. For $n\geq k$ let $V_{k,n}$ be chosen at random from the Stiefel manifold $\mathbb V_{k,n}$ with respect to the uniform distribution $\mu_{k,n}$. Then the sequence $V_{k,n}$, $n\geq k$, satisfies an LDP on $[-1,1]^{k\times \infty}$ at speed $n$ with good convex rate function $\rate:[-1,1]^{k\times\infty} \to [0,+\infty]$ given by
\[
\rate(A) :=
\begin{cases}
-\frac{1}{2} \log \det\big(\id_{k\times k} - AA^*\big) &:\, A\in \mathcal R_2^{k\times\infty} \text{ and } \|AA^*\| <1,\\
+\infty &:\, \text{otherwise}.
\end{cases}
\]
%where $\mathcal R_2^{k\times \infty}$ is the set of all $k\times \infty$ matrices whose rows belong to the space $\ell_2$ of square summable sequences.
\end{thmalpha}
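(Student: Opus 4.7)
The plan is to reduce to finite-dimensional column marginals and then apply a projective-limit argument. For each fixed $m\in\N$, let $\pi_m\colon[-1,1]^{k\times\infty}\to[-1,1]^{k\times m}$ denote the projection onto the first $m$ columns. My first step is to identify the distribution of $\pi_m(V_{k,n})$: realising $V_{k,n}$ as the first $k$ rows of a Haar-random orthogonal $n\times n$ matrix and integrating out the remaining $n-m$ columns yields the classical matrix-variate Beta (type I) distribution, with density
\[
f_{k,m,n}(B)\;=\;\frac{\Gamma_k(n/2)}{\pi^{km/2}\,\Gamma_k((n-m)/2)}\,\det(\id_k-BB^*)^{(n-m-k-1)/2}
\]
on $\{B\in\R^{k\times m}:BB^*<\id_k\}$ with respect to Lebesgue measure. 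Since Stirling's formula gives $\Gamma_k(n/2)/\Gamma_k((n-m)/2)\sim(n/2)^{km/2}$, the normalizing constant contributes only $O(\log n)=o(n)$ to the log-probability. A Laplace / Varadhan-type argument then delivers a finite-dimensional LDP for $\pi_m(V_{k,n})$ on the compact space $[-1,1]^{k\times m}$ at speed $n$, with good convex rate function
\[
\rate_m(B)\;=\;\begin{cases}-\tfrac12\log\det(\id_k-BB^*)&:\,BB^*<\id_k,\\+\infty&:\,\text{otherwise,}\end{cases}
\]
which is lower semicontinuous because $\det(\id_k-BB^*)\to 0$ as $B$ approaches the boundary.

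The space $[-1,1]^{k\times\infty}$ with its entrywise (product) topology is the projective limit of the system $([-1,1]^{k\times m},\pi_m)_{m\geq 1}$, so the Dawson--G\"artner projective-limit theorem lifts the finite-dimensional LDPs to an LDP for $V_{k,n}$ on $[-1,1]^{k\times\infty}$ at speed $n$ with good rate function $\rate(A)=\sup_{m\geq 1}\rate_m(\pi_m A)$. To identify this supremum with the formula in the statement, I write $B_m:=\pi_m A$ and observe that $B_{m+1}B_{m+1}^*=B_mB_m^*+a_{m+1}a_{m+1}^*$, where $a_{m+1}$ denotes the $(m+1)$-st column of $A$; hence $B_mB_m^*$ is monotone nondecreasing in the PSD order and $\rate_m(B_m)$ is monotone nondecreasing in $m$. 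If $A\in\mathcal R_2^{k\times\infty}$ with $\|AA^*\|<1$, then $B_mB_m^*\to AA^*$ in operator norm and continuity of $\det(\id_k-\cdot\,)$ at $AA^*$ yields $\rate_m(B_m)\to-\tfrac12\log\det(\id_k-AA^*)$. Otherwise, either some diagonal entry of $B_mB_m^*$ diverges (when $A\notin\mathcal R_2^{k\times\infty}$), forcing an eigenvalue above $1$, or the top eigenvalue of $B_mB_m^*$ tends to $1$ (when $\|AA^*\|=1$); in both cases the supremum is $+\infty$, matching the claim.

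Convexity of $\rate$ reduces to convexity of $B\mapsto-\log\det(\id_k-BB^*)$ on $\{BB^*<\id_k\}$, which follows at once from the block-determinant identity
\[
\det(\id_k-BB^*)\;=\;\det\begin{pmatrix}\id_k & B\\ B^* & \id_m\end{pmatrix}
\]
by writing the right-hand side as the affine map $B\mapsto\bigl(\begin{smallmatrix}\id_k & B\\ B^* & \id_m\end{smallmatrix}\bigr)$ composed with the convex function $M\mapsto-\log\det M$ on positive-definite $M$. The main obstacle is the finite-dimensional LDP step: although the density $f_{k,m,n}$ is fully explicit, one must carefully treat the boundary $\{BB^*=\id_k\}$ where the density vanishes only polynomially in $n$, and verify that closed sets intersecting the boundary contribute $+\infty$ to the upper bound in accordance with the convention $\rate_m=+\infty$ there. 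Once this step is cleanly executed, the projective-limit and identification arguments are essentially formal.
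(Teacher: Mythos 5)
Your proposal follows the same overall architecture as the paper: identify the first $m$ columns of $V_{k,n}$ as having an explicit density (your ``matrix-variate Beta type I'' is the paper's inverted matrix-variate $t$-distribution; the densities agree after setting the degrees-of-freedom parameter to $n-m-k+1$), derive a finite-dimensional LDP from that density by a Laplace-type estimate on a base of open balls, lift via Dawson--G\"artner, and identify the supremum using the PSD-monotonicity $B_{m+1}B_{m+1}^*=B_mB_m^*+a_{m+1}a_{m+1}^*$, exactly as in the paper's Lemma 3.2. The one genuinely different ingredient is the convexity argument. The paper expands $-\tfrac12\log\det(\id_k-AA^*)=\tfrac12\sum_{i\geq 1}\tfrac1i\|A\|_{\mathscr S_{2i}}^{2i}$ via $\log(1-x)=-\sum x^i/i$ and then uses the triangle inequality for Schatten norms plus convexity of $x\mapsto x^{2i}$; you instead invoke the Schur-complement identity $\det(\id_k-BB^*)=\det\bigl(\begin{smallmatrix}\id_k & B\\ B^*&\id_m\end{smallmatrix}\bigr)$, so that $B\mapsto -\log\det(\id_k-BB^*)$ is the composition of an affine map into the positive-definite cone with the convex function $-\log\det$, and then pass to $[-1,1]^{k\times\infty}$ by taking a supremum of convex functions. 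Your route is shorter, more elementary (no Schatten-norm machinery), and arguably the cleaner way to see convexity; the paper's route is equally valid and has the feature of exhibiting the rate function directly as a sum of Schatten-norm powers. One small imprecision: near the boundary $\{BB^*=\id_k\}$ the density decays exponentially fast in $n$ (the normalizing constant grows only polynomially, so that is what you likely meant); the point still stands that one needs the upper-semicontinuity of $-\rate_m$ with values in $[-\infty,0]$ to handle that boundary, which the paper does via equation \eqref{eq: upper semicont of -J}.
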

Here, $\|AA^*\|$ denotes the operator norm of the square matrix $AA^*\in \R^{k\times k}$, which is the Gram matrix of the $k$ rows of $A$ and is well defined provided $A\in \mathcal R_2^{k\times \infty}$.  Observe that $\|AA^*\|<1$ if and only if the matrix $\id_{k\times k}-AA^*$ is positive definite.  In this case, $\id_{k\times k}-AA^*$ has positive determinant, implying that the rate function above is indeed well-defined. An LDP closely related to the special case $k=1$ of Theorem~\ref{thm:ldp for stiefel manifold} can be found in~\cite[Theorem~3.7]{barthe_etal}.

\medskip
\noindent
\textbf{LDP for orthogonal matrices.}
In the special case $k=n$, we can view $O^{(n)}:=V_{n,n}$ as a random orthogonal matrix distributed according to the Haar measure on the orthogonal group $\mathcal O(n)$. As $n\to\infty$, the entries of $O^{(n)}$ converge to independent standard normal random variables after multiplication by $\sqrt n$; see~\cite{jiang1,jiang2} for much stronger results. The maximal entry is known to be of order $2\sqrt{(\log n)/n}$; see~\cite{jiang_maximal}.  In the following theorem, we characterize the atypical behavior of the entries of large orthogonal matrices. We denote by $\mathcal S_2$ the class of Hilbert--Schmidt operators on the (real) Hilbert space $\ell_2$. Any  operator $T\in \mathcal S_2$ can be identified with a $\infty\times \infty$-matrix $(t_{i,j})_{i,j\in \N}$ such that the Hilbert-Schmidt norm (Frobenius norm) is finite, i.e., $\sum_{i,j\in \N} t_{i,j}^2 <\infty$.
%For more information we refer the reader to Section \ref{sec:prelim}. ZK: I removed this because section 2 does not contain information on Hilbert-Schmidt operators.

\begin{thmalpha}[LDP for orthogonal matrices]\label{thm:ldp orthogonal group}
For $n\in\N$, let $O^{(n)}$ be an $n\times n$ matrix chosen uniformly at random  from the orthogonal group $\mathcal O(n)$ with respect to the Haar probability measure. We identify $O^{(n)}$ with an element from $[-1,1]^{\N\times \N}$ by filling up $O^{(n)}$ with zeros. Then the sequence $O^{(n)}$, $n\in\N$, satisfies an LDP on the compact space $[-1,1]^{\N\times \N}$ endowed with the product topology with good convex rate function $\rate:[-1,1]^{\N\times \N}\to[0,\infty]$ given by
\[
\rate(T) :=
\begin{cases}
- \frac{1}{2}\log \det \big(\id_{\infty\times \infty} - TT^*\big) & : T\in\mathcal S_2 \text{  and  } \,\|TT^*\| <1\\
+\infty &: \text{ otherwise}.
\end{cases}
\]
\end{thmalpha}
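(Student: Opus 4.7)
The plan is to derive Theorem B from Theorem A via the Dawson--G\"artner projective limit theorem. The bridge between the two settings is provided by the left-invariance of the Haar measure on $\mathcal O(n)$: for $n\geq k$, the first $k$ rows of a Haar-distributed $O^{(n)}$ form a random element of $\mathbb V_{k,n}$ distributed according to $\mu_{k,n}$. Let $\pi_k : [-1,1]^{\N\times\N}\to [-1,1]^{k\times\infty}$ denote the continuous projection onto the first $k$ rows. Then, after padding $O^{(n)}$ with zero rows and columns as in the statement,
\[
\pi_k\bigl(O^{(n)}\bigr) \stackrel{d}{=} V_{k,n} \qquad (n\geq k),
\]
and the compact product space $[-1,1]^{\N\times\N}$ is naturally the projective limit of the spaces $[-1,1]^{k\times\infty}$ under the maps $\pi_k$.

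Denoting by $\rate_k$ the rate function from Theorem A associated to level $k$, that theorem says that $\pi_k(O^{(n)}), n\geq k$, satisfies an LDP at speed $n$ with good convex rate function $\rate_k$. The Dawson--G\"artner theorem then delivers, for $O^{(n)}$ viewed in $[-1,1]^{\N\times \N}$, an LDP at speed $n$ with good convex rate function
\[
\rate(T) \;=\; \sup_{k\in\N}\, \rate_k\bigl(T^{(k)}\bigr),
\]
where $T^{(k)}$ denotes the matrix formed by the first $k$ rows of $T$; goodness and convexity of $\rate$ are inherited from the respective properties of each $\rate_k$.

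It remains to identify this supremum with the Fredholm-type expression claimed in Theorem B. Let $Q_k$ be the orthogonal projection of $\ell_2$ onto $\linspan\{e_1,\dots,e_k\}$; then $T^{(k)}(T^{(k)})^{\ast}$ is precisely the top-left $k\times k$ principal block of $TT^{\ast}$, i.e.\ $Q_k TT^{\ast} Q_k$. Cauchy's interlacing inequalities, applied to this principal submatrix, yield
\[
\det\bigl(\id_{k\times k}-T^{(k)}(T^{(k)})^{\ast}\bigr) \;\geq\; \det\bigl(\id_{(k+1)\times(k+1)}-T^{(k+1)}(T^{(k+1)})^{\ast}\bigr),
\]
so $k\mapsto \rate_k(T^{(k)})$ is nondecreasing and the supremum is in fact a monotone limit. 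When $T\in\mathcal S_2$ and $\|TT^{\ast}\|<1$, the operator $TT^{\ast}$ is trace-class, $Q_k TT^{\ast} Q_k\to TT^{\ast}$ in trace norm, and continuity of the Fredholm determinant on the trace-class ideal gives
\[
\lim_{k\to\infty}\det\bigl(\id - Q_k TT^{\ast} Q_k\bigr) \;=\; \det\bigl(\id_{\infty\times\infty}-TT^{\ast}\bigr),
\]
which is the required value. In the complementary regime one argues that $\rate(T)=+\infty$: if some row of $T$ is not in $\ell_2$, then $T^{(k)}\notin \mathcal R_2^{k\times\infty}$ for all sufficiently large $k$; if all rows of $T$ lie in $\ell_2$ but $T\notin \mathcal S_2$, then $\Tr\bigl(T^{(k)}(T^{(k)})^{\ast}\bigr)\to\infty$ and the elementary bound $-\log\det(\id-M)\geq \Tr(M)$ for positive $M$ with $\|M\|<1$ forces $\rate_k(T^{(k)})\to\infty$; finally, if $T\in\mathcal S_2$ but $\|TT^{\ast}\|\geq 1$, the spectral theorem for the compact self-adjoint operator $TT^{\ast}$ produces a unit vector $v$ with $\langle TT^{\ast}v,v\rangle\geq 1$, whose approximation by vectors in $\linspan\{e_1,\dots,e_k\}$ either forces $\|T^{(k)}(T^{(k)})^{\ast}\|\geq 1$ for large enough $k$ or drives the Fredholm determinant to zero in the limit.

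The main technical obstacle is this last identification step, and in particular the careful treatment of the boundary and degenerate cases; the remainder of the argument is essentially a mechanical application of Dawson--G\"artner to the LDP already furnished by Theorem A.
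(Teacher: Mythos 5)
Your proposal is correct, and it reaches the theorem by a slightly streamlined route compared to the paper. Both arguments rest on the same pillars: the LDP for the $k\times\ell$ corner (Lemma~\ref{lem:ldp_stiefel_kl}), monotonicity of the log-determinant in $\ell$ (Lemma~\ref{lem:monotone_determinant_in_ell}, via Courant--Fischer) and in $k$ (Lemma~\ref{lem:log-determinant decreasing in k}, via Cauchy interlacing), and the identification of the monotone limit with the Fredholm determinant using trace-norm convergence $Q_kTT^*Q_k\to TT^*$. The difference is organizational: you treat $[-1,1]^{\N\times\N}$ as the projective limit over $k$ of the spaces $[-1,1]^{k\times\infty}$ and simply invoke Theorem~\ref{thm:ldp for stiefel manifold} as the input to a second application of Dawson--G\"artner (the first being hidden inside Theorem~\ref{thm:ldp for stiefel manifold} itself). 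The paper instead re-derives the level-$k$ LDP inside the proof and then runs the base-of-topology argument (Proposition~\ref{prop:basis topology}) over the cylinders $B_{k,\ell}(M,r)$ jointly, which amounts to a Dawson--G\"artner argument over the two indices $(k,\ell)$ done by hand. Your version is more modular and makes the dependence on Theorem~\ref{thm:ldp for stiefel manifold} explicit; the paper's version is more self-contained. Two small points worth tightening if you write this out: (i) when you use $Q_kTT^*Q_k\to TT^*$ in trace norm you should justify it (either via a singular-value-decomposition and dominated-convergence argument, or, as the paper does, by computing $\Tr(T^*T - T^{(k)*}T^{(k)}) = \sum_{i>k}\sum_j t_{ij}^2\to 0$ using positivity of the difference); and (ii) in the case $T\in\mathcal S_2$, $\|TT^*\|\geq 1$, it is cleaner to split into $\|TT^*\|>1$ (where the strong/trace-norm convergence forces $\|T^{(k)}(T^{(k)})^*\|>1$ for large $k$) and $\|TT^*\|=1$ (where $\lambda_1^{(k)}\to 1$ makes the determinant vanish), rather than the "either/or" phrasing you use.
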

In the previous theorem, the determinant is to be understood in the sense explained, for instance, in \cite[p.61]{GGK2000}. This means that if $S$ is a trace-class operator (i.e., an operator in the Schatten $1$-class $\mathcal S_1$) and $S_n$, $n\in\N$, are finite rank operators such that $\|S-S_n\|_{\mathcal S_1}\to 0$ as $n\to\infty$, then
\[
\det\big(\id_{\infty\times \infty} - S\big) := \lim_{n\to\infty}\det\big(\id_{\infty\times \infty}-S_n\big).
\]
Alternatively, $T$ is Hilbert--Schmidt if and only if  $TT^*$ is a trace-class operator (that is, $TT^*\in\mathcal S_1$). In this case,  by Lidskii's trace theorem~\cite[p.~63]{GGK2000}, we have
$$
\det\big(\id_{\infty\times \infty} - TT^*\big)
=
\prod_{i=1}^\infty (1-\lambda_i(TT^*))
=
\prod_{i=1}^\infty (1-\lambda_i(T^*T))
=
\det\big(\id_{\infty\times \infty} - T^*T\big),
$$
where $\lambda_i(TT^*)$ are the eigenvalues of $TT^*$ taken with multiplicities (which coincide with the eigenvalues of $T^*T$ by  Corollary 2.2 on p.~51 of~\cite{GGK2000}).
If, additionally, $\|TT^*\| <1$, then the product on the right-hand side converges to a number in $(0,1]$ since $\sum_{i=1}^\infty |\lambda_i(TT^*)|<\infty$ by the trace-class property of $TT^*$.
%The rate function in Theorem \ref{thm:ldp orthogonal group} is finite for $T\in\mathcal S_2$, because  $TT^*\in\mathcal S_1$) and of course from $$. The details are explained in Section \ref{sec: Proof of Theorem B}.
Let us mention that there are many known LDP's for random matrices~\cite{guionnet_surv}, but these usually deal with the empirical eigenvalue distribution and are of different type than Theorem~\ref{thm:ldp orthogonal group}.

\medskip
\noindent
\textbf{LDP for random projections of uniform distribution on $\ell_p^n$-balls.}
The next result deals with the case of random $k$-dimensional projections of the uniform distribution on an $\ell_p^n$-ball for $1\leq p <\infty$. Recall that the unit ball in $\ell_p^n$ is given by
\[
\B_p^n := \Bigg\{ x=(x_i)_{i=1}^n \,:\, \|x\|_p = \Big(\sum_{i=1}^n|x_i|^p\Big)^{\frac{1}{p}} \leq 1 \Bigg\}.
\]
Let $X^{(n)}$ be a random vector uniformly distributed on $n^{1/p}\B_p^n$. Given some element $V\in \mathbb V_{k,n}$ of the Stiefel manifold, which we view as a linear operator $V:\R^n \to\R^k$, we consider the projection of the uniform distribution on $n^{1/p}\B_p^n$ by $V$, which is a probability measure on $\R^k$ defined by
$$
\mu_V(A)
=
\Pro[V X^{(n)} \in A]
=
\Pro\Big[\big(\langle R_1(V),X^{(n)} \rangle,\dots,\langle R_k(V),X^{(n)} \rangle\big) \in A \Big],
\qquad
A\subset \R^k \text{ Borel},
$$
where $R_1(V),\dots,R_k(V)$ are the rows of $V$.
Let now $V_{k,n} : \R^n \to\R^k$ be a random element in the Stiefel manifold $\mathbb V_{k,n}$ chosen with respect to $\mu_{k,n}$.
%Then, if $x\in\R^n$, $V_{k,n}x = \big(\langle R_1(V_{k,n}),x \rangle,\dots,\langle R_k(V_{k,n}),x \rangle\big)\in\R^k$ is a random projection of $x$ onto $\R^k$, .  the probability measure
%\[
%\mu_{V_{k,n}}(A) := \Pro\Big[\big(\langle R_1(V_{k,n}),X^{(n)} \rangle,\dots,\langle R_k(V_{k,n}),X^{(n)} \rangle\big) \in A \Big], %\qquad A\subset \R^k \text{ Borel}.
%\]
%{\color{red} Z: Why these scalar products? Can we write $V_{k,n} X^{(n)}$?}
The sequence $\mu_{V_{k,n}}$, $n\geq k$, is a sequence of random probability measures consisting of the random $k$-dimensional projections of the uniform distributions on the balls $n^{1/p}\B_p^n$. We view each $\mu_{V_{k,n}}$ as a random element with values in the space of probability measures on $\R^k$, denoted by $\mathcal M_1(\R^k)$ and endowed with the topology of weak convergence.

\begin{thmalpha}[LDP for random projections of uniform distribution on $\ell_p^n$-balls]\label{thm:ldp multidimensional projection unif distr pball}
%\textcolor{red}{ldp multidimensional random projections of uniform distr. on $\ell_p$ balls}
Fix some $1\leq p < \infty$ with $p\neq 2$ and some $k\in \N$. Then, the sequence of random probability measures $\mu_{V_{k,n}}$, $n\geq k$, satisfies an LDP on $\mathcal M_1(\R^k)$ with speed $n$ and a good rate function $\rate: \mathcal M_1(\R^k) \to [0,+\infty]$ defined as follows:
$$
\rate(\nu) = -\frac{1}{2} \log \det\big(\id_{k\times k} - AA^*\big),
$$
if $\nu\in \mathcal M_1(\R^k)$  admits a representation of the form
\begin{equation}\label{eq:rep_law_sum_p_gauss}
\nu = \text{Law}\left(\sum_{j=1}^\infty C_j(A) Z_j + \sigma_p (\id_{k\times k}-AA^*)^{1/2} N_{k}\right)
\end{equation}
for some matrix $A\in \mathcal R_2^{k\times\infty}$ such that $\|AA^*\| < 1$, where $C_1(A), C_2(A),\ldots$ are the columns of $A$,  the random variables $Z_1,Z_2,\ldots$ are i.i.d.\ with the  generalized $p$-Gaussian density
$$
f_p(x):= {1\over 2p^{1/p}\Gamma(1+{1\over p})}\,e^{-|x|^p/p},
\qquad
%p\neq \infty,
%\qquad
%f_\infty(x):= \frac 12 \mathbb 1_{[-1,1]}(x),
%\qquad
x\in \R,
$$
and variance
$$
\sigma_p^2 := \E[Z_1^2] = p^{2/p}\frac{\Gamma(3/p)}{\Gamma(1/p)},
%\qquad
%p\neq \infty,
%\qquad
%\sigma_\infty^2 = \frac 13,
$$
and, independently, $N_k$ is a $k$-dimensional standard Gaussian random vector. If $\nu$ does not admit such a representation, then $\rate(\nu) = +\infty$.
\end{thmalpha}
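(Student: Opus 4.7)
The plan is to reduce Theorem~\ref{thm:ldp multidimensional projection unif distr pball} to the LDP for random projections of the iid $p$-Gaussian product distribution (established earlier in the paper via Theorem~\ref{thm:ldp for stiefel manifold} together with a point-process/CLT analysis), using the Schechtman--Zinn probabilistic representation of the uniform distribution on the $\ell_p^n$-ball. Schechtman--Zinn furnishes a coupling
\[
X^{(n)}\stackrel{d}{=}\frac{n^{1/p}}{T_n}\,Y,\qquad T_n:=\bigl(\|Y\|_p^p+pE\bigr)^{1/p},
\]
where $Y=(Y_1,\ldots,Y_n)$ has iid components of density $f_p$ and $E\sim\mathrm{Exp}(1)$ is independent. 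A direct computation gives $\E|Y_1|^p=1$, so $T_n/n^{1/p}\to 1$ almost surely, and by Cram\'er's theorem $\Pro[|T_n/n^{1/p}-1|>\eps]$ decays exponentially in $n$ for every $\eps>0$.

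Let $\nu_{V_{k,n}}:=\mathrm{Law}(V_{k,n}Y\mid V_{k,n})$ denote the product-distribution analogue of $\mu_{V_{k,n}}$. Using the independence of $V_{k,n}$ from $(Y,E)$, the identity $\E[\|V_{k,n}Y\|_2^2\mid V_{k,n}]=\sigma_p^2 k$ (which follows from $V_{k,n}V_{k,n}^*=\id$), and the above coupling, a standard truncation gives the deterministic bound
\[
d_{BL}\bigl(\mu_{V_{k,n}},\nu_{V_{k,n}}\bigr)\leq \eps\,\sigma_p\sqrt{k}+2\,\Pro\bigl[|T_n/n^{1/p}-1|>\eps\bigr]
\]
for every $\eps>0$, uniformly in the Stiefel matrix $V_{k,n}$. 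Taking $\eps=\eps_n\to 0$ slowly (e.g., $\eps_n=n^{-1/4}$) forces the right-hand side to tend to zero, so $d_{BL}(\mu_{V_{k,n}},\nu_{V_{k,n}})\to 0$ deterministically and uniformly in $V_{k,n}$. The random measures $\mu_{V_{k,n}}$ and $\nu_{V_{k,n}}$ are therefore exponentially equivalent at any speed, and it suffices to establish the LDP for $\nu_{V_{k,n}}$.

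By the product-distribution LDP (available earlier in the paper), $\nu_{V_{k,n}}$ satisfies an LDP on $\mathcal M_1(\R^k)$ at speed $n$ with rate function given by~\eqref{eq:rep_law_sum_p_gauss}. The identification is unambiguous: since $p\neq 2$, the cumulant $\kappa_4(Z_j)$ is nonzero, and matching the cumulants in the characteristic function of $\mathrm{Law}(\sum_j C_j(A)Z_j+\sigma_p(\id-AA^*)^{1/2}N_k)$ together with the symmetry $Z_j\stackrel{d}{=}-Z_j$ forces any two matrices $A,A'$ with the same law to differ only by a column permutation and sign-flip, hence $AA^*=A'(A')^*$. Combined with the exponential equivalence, this yields the claimed LDP for $\mu_{V_{k,n}}$.

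\emph{Main obstacle.} The genuinely nontrivial step, subsumed in the invoked product-distribution LDP, is transferring the Stiefel LDP of Theorem~\ref{thm:ldp for stiefel manifold} through the map $A\mapsto\mathrm{Law}(\sum_j C_j(A)Z_j+\sigma_p(\id-AA^*)^{1/2}N_k)$, which is \emph{not} continuous under the product topology on $[-1,1]^{k\times\infty}$ (columnar mass can escape to infinity in the column index). The standard remedy is to truncate at $M$ columns, apply the contraction principle to the resulting continuous truncated map, use a multivariate Lindeberg CLT on sublevel sets of the Stiefel rate function -- where one has a uniform bound $\|C_j\|\leq\sqrt{1-e^{-2L}}$ on columns -- to absorb the Gaussian tail arising from small columns, and let $M\to\infty$ via goodness of the Stiefel rate function. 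For Theorem~\ref{thm:ldp multidimensional projection unif distr pball} itself, the only real work is the Schechtman--Zinn exponential equivalence above.
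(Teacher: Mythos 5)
Your proposal is correct and follows the same high-level strategy as the paper: use the Schechtman--Zinn representation to couple the $\ell_p^n$-ball projection $\mu_{V_{k,n}}$ to the product-measure projection $\widetilde\mu_{V_{k,n}}$ (which the paper denotes $\widetilde\mu$ and you denote $\nu$), show that the two random measures become close in a metric generating the weak topology uniformly over the Stiefel manifold, and then transfer the LDP established for the product case (Theorem~\ref{thm:ldp multidimensional projection_product_measures}, proved in Section~\ref{sec:proof_theorem_D}). The technical details differ from the paper in a way worth noting: you take the Schechtman--Zinn form $X^{(n)}\stackrel{d}{=} n^{1/p}Y/T_n$ with $T_n=(\|Y\|_p^p+pE)^{1/p}$, use the deterministic bounded-Lipschitz bound $d_{BL}(\mu_{V},\nu_{V})\lesssim\eps\sigma_p\sqrt k+\Pro[|T_n/n^{1/p}-1|>\eps]$, and control the second term by a Chernoff/Cram\'er bound since $T_n^p/n$ is an i.i.d.\ average plus a vanishing remainder. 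The paper instead uses the form $U^{1/n}n^{1/p}Z/\|Z\|_p$, works in L\'evy--Prokhorov distance, and controls the error term via Markov, Cauchy--Schwarz, and a fairly involved uniform-integrability argument for $\E\big[n^{4/p}\|Z\|_p^{-4}\big]$ (Lemma~\ref{lem:levy-prokhorov lemma}); your tail-bound route is meaningfully shorter. You then invoke exponential equivalence to transfer the full LDP directly, whereas the paper proves a weak-LDP transfer lemma (Lemma~\ref{lem:weak ldp for tilde mu implies weak ldp for mu}) and separately shows all $\mu_V$ lie in a fixed weakly compact set to upgrade to a full LDP; both are valid, and yours is again a bit more streamlined. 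The ``main obstacle'' paragraph of your proposal sketches the proof of the product-measure LDP itself; the paper's actual proof there proceeds via the point-process space $\mathbb W_k$ and an explicit Stiefel density calculation rather than a truncation/contraction-principle scheme, but since you are invoking that theorem rather than reproving it, this does not affect the correctness of your reduction.
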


We shall show that the set $\mathcal K_{k, p}$ of probability measures on $\R^k$ admitting a representation of the form~\eqref{eq:rep_law_sum_p_gauss} with some matrix $A\in \mathcal R_2^{k\times\infty}$ such that $\|AA^*\| \leq  1$  (where we allow equality) is compact in the topology of weak convergence on $\mathcal M_1(\R^k)$. Moreover, such a representation of $\nu$, if it exists, is unique up to a signed permutation of the columns of the matrix $A$. Since the matrix $AA^*$ does not change under signed permutations of the columns of $A$, the rate function $\rate$ is well-defined. It is the uniqueness of the representation~\eqref{eq:rep_law_sum_p_gauss} which forces us to exclude the case $p=2$. Observe that this case is anyway not interesting because then $\mu_{V_{k,n}}$, which does not depend on the direction of the projection,  becomes deterministic. The function $\rate(\nu)$ vanishes if and only if $\nu= \mathcal N(0, \sigma_p^2 \id_{k\times k})$ is the isotropic Gaussian law on $\R^k$ with variance $\sigma_p^2$, which corresponds to the choice $A=0$ in~\eqref{eq:rep_law_sum_p_gauss}. This leads to the following consequence.
\begin{cor*}
In the setting of Theorem~\ref{thm:ldp multidimensional projection unif distr pball}, the following holds with probability $1$: the sequence $\mu_{V_{k,n}}$ converges to  $\mathcal N(0, \sigma_p^2 \id_{k\times k})$  in the weak topology of $\mathcal M_1(\R^k)$ as $n\to\infty$.
\end{cor*}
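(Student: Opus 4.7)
The plan is to derive the almost-sure weak convergence of $\mu_{V_{k,n}}$ to $\nu_\star:=\mathcal N(0,\sigma_p^2 \id_{k\times k})$ directly from Theorem~\ref{thm:ldp multidimensional projection unif distr pball} via the first Borel--Cantelli lemma. The starting point is that $\nu_\star$ is the \emph{unique} zero of the rate function $\rate$. Indeed, $\rate(\nu)=0$ forces $\nu$ to admit a representation as in~\eqref{eq:rep_law_sum_p_gauss} with $\det(\id_{k\times k}-AA^*)=1$, which in turn requires $AA^*=0$, hence $A=0$ (since the diagonal entries of $AA^*$ are the squared $\ell_2$-norms of the rows of $A$); then~\eqref{eq:rep_law_sum_p_gauss} collapses to $\nu = \mathrm{Law}(\sigma_p N_k)=\nu_\star$. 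The uniqueness of the representation, which is where the restriction $p\neq 2$ enters, is established elsewhere in the paper and may be assumed here.

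Next I would upgrade this into a strictly positive lower bound on $\rate$ away from $\nu_\star$. Since $\rate$ is a good rate function, its sublevel sets are compact; hence if $F\subset \mathcal M_1(\R^k)$ is closed with $\nu_\star\notin F$, any minimizing sequence in $F$ with $\rate$-values bounded by, say, $1$ sits in a compact sublevel set, has a cluster point $\nu'\in F$, and by lower semicontinuity satisfies $\rate(\nu')\leq \liminf \rate(\nu_n)$. Were $\inf_F \rate=0$, this would force $\nu'=\nu_\star\in F$, a contradiction. Thus $\inf_{\nu\in F}\rate(\nu)>0$. Fixing a metric $d$ compatible with the weak topology on the Polish space $\mathcal M_1(\R^k)$ (for instance the L\'evy--Prokhorov metric), and applying the above to $F_\eps:=\{\nu\in\mathcal M_1(\R^k):d(\nu,\nu_\star)\geq \eps\}$, produces a constant $c_\eps>0$ with $\inf_{F_\eps}\rate\geq 2c_\eps$.

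Feeding this into the LDP upper bound of Theorem~\ref{thm:ldp multidimensional projection unif distr pball} yields, for every $\eps>0$ and all sufficiently large $n$,
$$
\Pro\big[d(\mu_{V_{k,n}},\nu_\star)\geq \eps\big] \;=\; \Pro\big[\mu_{V_{k,n}}\in F_\eps\big] \;\leq\; e^{-c_\eps n},
$$
so these probabilities are summable in $n$. Realizing the random matrices $V_{k,n}$ on a common probability space (e.g.\ the product of the Haar spaces $\mathbb V_{k,n}$), the first Borel--Cantelli lemma gives $\Pro[d(\mu_{V_{k,n}},\nu_\star)\geq \eps \text{ i.o.}]=0$ for every $\eps>0$; intersecting over $\eps=1/m$, $m\in\N$, concludes that $d(\mu_{V_{k,n}},\nu_\star)\to 0$ almost surely, which is the claimed weak convergence. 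The only genuinely non-routine ingredient is the identification of $\nu_\star$ as the unique minimizer of $\rate$; everything else is the textbook "good rate function plus Borel--Cantelli" recipe, and I do not anticipate any real obstacle beyond the uniqueness of representation~\eqref{eq:rep_law_sum_p_gauss} which is already proved in the paper.
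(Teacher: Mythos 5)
Your proof is correct and follows essentially the same route as the paper: identify $\mathcal N(0,\sigma_p^2\id_{k\times k})$ as the unique zero of $\rate$, use lower semicontinuity and compactness to get a strictly positive infimum of $\rate$ on the complement of any weak neighborhood, feed that into the LDP upper bound to obtain exponential decay, and finish with Borel--Cantelli. The one micro-difference is the mechanism for the positive infimum: the paper minimizes $\rate$ over the compact set $\mathcal K_{k,p}\setminus O$ (exploiting that the set of finite-rate measures $\mathcal K_{k,p}$ is compact, which was established separately), whereas you extract the same conclusion from goodness of $\rate$ (compact sublevel sets) plus a minimizing-sequence argument. Both are standard and equally valid; your version is marginally more self-contained in that it only invokes properties already packaged into the word "good," while the paper leans on the explicit description of $\mathcal K_{k,p}$. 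Your aside about realizing the $V_{k,n}$ on a common probability space is a reasonable point of hygiene that the paper leaves implicit.
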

\begin{proof}
Let $O$ be any weak neighborhood of $\mathcal N(0, \sigma_p^2 \id_{k\times k})$ in $\mathcal M_1(\R^k)$. The lower semi-continuous  function $\rate$ does not vanish on the compact set $\mathcal K_{k,p}\backslash O$, and hence its minimum $m$ there satisfies $m>0$. The LDP stated in Theorem~\ref{thm:ldp multidimensional projection unif distr pball} implies that $\Pro [\mu_{V_{k,n}}\notin O] = O(e^{-mn/2})$. An application of the lemma of Borel--Cantelli completes the proof.
\end{proof}

Note that a similar claim holds for general isotropic convex bodies by Klartag's central limit theorem for convex bodies \cite[Theorem 1.3]{K2007} using a standard Borel-Cantelli argument.
Returning to Theorem~\ref{thm:ldp multidimensional projection unif distr pball}, we observe that the compact set $\mathcal K_{k,p}$ %(after excluding all elements corresponding to $A$'s with $\|AA^*\|=1$)
encodes all possible deviations of the projection $\mu_{V_{k,n}}$ which have probabilities decaying exponentially in $n$ (while the probabilities of deviations not belonging to $\mathcal K_{k,p}$ decay superexponentially).
 In Section~\ref{sec:proof_theorem_D} we shall construct a natural homeomorphism  from $\mathcal K_{k,p}$ to a certain space of locally finite point configurations on $[-1,1]^k \backslash\{0\}$ endowed with the vague topology.
We conjecture that the space $\mathcal K_{k,p}$ is homeomorphic to the Hilbert cube $[0,1]^{\N}$ endowed with the product topology.

\medskip
\noindent
\textbf{LDP for random projections of product measures.}
Theorem~\ref{thm:ldp multidimensional projection unif distr pball} will be deduced from an LDP for random $k$-dimensional projections of product measures. To state it, let $Y^{(n)}=(Y_1,\ldots,Y_n)$ be a random vector whose components are i.i.d.\ random variables $Y_1,\ldots, Y_n$. Given some deterministic Stiefel matrix $V\in \mathbb V_{k,n}$, we define the projection of the distribution of $Y^{(n)}$ by $V$ as follows:
$$
\widetilde \mu_V(A)
=
\Pro[V Y^{(n)} \in A]
=
\Pro\Big[\big(\langle R_1(V),Y^{(n)} \rangle,\dots,\langle R_k(V),Y^{(n)} \rangle\big) \in A \Big],
\qquad
A\subset \R^k \text{ Borel}.
$$
As before, we are interested in random projections which are obtained by letting $V=V_{k,n}$ be random with the uniform distribution $\mu_{k,n}$ on $\mathbb V_{k,n}$.
\begin{thmalpha}[LDP for random projections of product measures]\label{thm:ldp multidimensional projection_product_measures}
Consider a random vector $Y^{(n)}=(Y_1,\ldots,Y_n)$, where $Y_1,Y_2,\ldots$ are non-Gaussian i.i.d.\ random variables with symmetric distribution (meaning that $Y_1$ has the same law as $-Y_1$) and $\E[|Y_1|^p] <+\infty$ for all $p\in \N$. Let $\sigma^2 := \E[Y_1^2]>0$ be the variance of $Y_1$.
%Assume additionally that the moment generating function $\E \eee^{tY_1}$ is finite for all $t$ in a sufficiently small interval $(-\eps_0,\eps_0)$.
Then, the sequence of random probability measures $\widetilde \mu_{V_{k,n}}$, $n\geq k$, satisfies an LDP on $\mathcal M_1(\R^k)$ with speed $n$ and a good rate function $\rate: \mathcal M_1(\R^k) \to [0,+\infty]$ defined as follows:
$$
\rate(\nu) = -\frac{1}{2} \log \det\big(\id_{k\times k} - AA^*\big),
$$
if $\nu\in \mathcal M_1(\R^k)$  admits a representation of the form
\begin{equation}\label{eq:rep_law_sum_p_gauss_product_meas}
\nu = \text{Law}\left(\sum_{j=1}^\infty C_j(A) Y_j + \sigma (\id_{k\times k}-AA^*)^{1/2} N_{k}\right)
\end{equation}
for some $A\in \mathcal R_2^{k\times \infty}$ such that $\|AA^*\| < 1$. Here, $N_k$ is a $k$-dimensional standard Gaussian random vector independent of $Y_1,Y_2,\ldots$. If $\nu$ does not admit such a representation, then $\rate(\nu) = +\infty$.
\end{thmalpha}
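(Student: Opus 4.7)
The strategy is to derive Theorem~\ref{thm:ldp multidimensional projection_product_measures} from the LDP for Stiefel matrices established in Theorem~\ref{thm:ldp for stiefel manifold} via a contraction argument that uses a finite-rank truncation whose residual is controlled by a conditional central limit theorem with quantitative bounds. Viewing the columns $C_1(V_{k,n}),\dots,C_n(V_{k,n})$ as a point configuration in $\R^k$, the LDP for $V_{k,n}$ predicts that atypical behavior at the LDP scale corresponds to configurations where finitely many columns have macroscopic norms (with aggregate $\|AA^*\|<1$) while all remaining columns are microscopic; the macroscopic columns contribute $\sum C_j(A) Y_j$ to the projection, and the microscopic ones Gaussianize into the residual $\sigma(\id_{k\times k}-AA^*)^{1/2} N_k$.

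Concretely, for each $N\geq k$ I would define the continuous map $\Psi_N:[-1,1]^{k\times\infty}\to\mathcal M_1(\R^k)$ by
\[
\Psi_N(A):=\mathrm{Law}\!\left(\sum_{j=1}^N C_j(A)\,Y_j + \sigma\,Q_N(A)^{1/2}\,N_k\right),
\]
where $N_k\sim\mathcal N(0,\id_{k\times k})$ is independent of the $Y_j$ and $Q_N(A)$ denotes the positive-semidefinite part (via the spectral calculus) of $\id_{k\times k}-\sum_{j=1}^N C_j(A)C_j(A)^T$. Since $\Psi_N$ depends continuously on only the first $N$ columns, and the spectral positive-part and matrix square-root are continuous on symmetric matrices, $\Psi_N$ is continuous from the product topology on $[-1,1]^{k\times\infty}$ to the weak topology on $\mathcal M_1(\R^k)$. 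For a Stiefel matrix $V_{k,n}$ padded with zeros, $V_{k,n}V_{k,n}^*=\id_{k\times k}$, so $Q_N(V_{k,n})$ is precisely the (PSD) tail covariance $\sum_{j=N+1}^n C_j(V_{k,n})C_j(V_{k,n})^T$. Applying the contraction principle to Theorem~\ref{thm:ldp for stiefel manifold} through $\Psi_N$ yields an LDP for $\Psi_N(V_{k,n})$ at speed $n$ with good rate function
\[
\rate_N^\Psi(\nu)=\inf\!\left\{-\tfrac12\log\det(\id_{k\times k}-AA^*):\ A\in\mathcal R_2^{k\times\infty},\ \|AA^*\|<1,\ \Psi_N(A)=\nu\right\},
\]
and a short monotonicity argument (padding tail columns with zeros only increases $\det(\id-AA^*)$) shows that the infimum is attained among matrices supported on the first $N$ columns.

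To pass from $\Psi_N(V_{k,n})$ to $\widetilde\mu_{V_{k,n}}$ I would invoke the extended contraction (or approximation) principle, for which it suffices to show that $\Psi_N(V_{k,n})$ is an exponentially good approximation of $\widetilde\mu_{V_{k,n}}$ in the weak topology as $N\to\infty$. Conditionally on $V_{k,n}$, the discrepancy between the two random laws reduces to the distance between the law of the tail sum $\sum_{j>N}C_j(V_{k,n})Y_j$ and its Gaussian surrogate with matching covariance $\sigma^2 Q_N(V_{k,n})$; a multivariate Berry--Esseen/Lindeberg estimate, available because $Y_1$ is symmetric with all moments finite, bounds this distance essentially by $\max_{j>N}\|C_j(V_{k,n})\|$, and the sharp LDP tail estimates on column norms coming from Theorem~\ref{thm:ldp for stiefel manifold} itself then control this maximum. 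To finally identify the resulting rate function with the expression stated in Theorem~\ref{thm:ldp multidimensional projection_product_measures}, the key ingredient is the uniqueness of the representation~(\ref{eq:rep_law_sum_p_gauss_product_meas}) up to a signed permutation of the columns of $A$; this is a L\'evy--Cram\'er-type statement for the characteristic function $\prod_{j\geq1}\psi_{Y_1}(\langle\xi,C_j(A)\rangle)\cdot\exp(-\tfrac{1}{2}\sigma^2\langle\xi,(\id-AA^*)\xi\rangle)$, which crucially uses that $Y_1$ is non-Gaussian, symmetric, and has all moments finite, and the conclusion is that $AA^*$ (hence also the rate $-\tfrac{1}{2}\log\det(\id-AA^*)$) is well defined. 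The main obstacle is the exponential approximation step: a naive union bound combined with the single-column tail estimate yields only a fixed-rate exponential decay in $n$, and obtaining the super-exponential rate required by the extended contraction principle demands a more delicate argument that exploits the combined LDP cost of simultaneously having several tail columns of non-negligible size together with the full all-moments assumption on the $Y_j$ (very plausibly facilitated by the point-process formulation of the column configurations alluded to in the paper's abstract).
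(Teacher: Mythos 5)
Your intuition about the structure is correct — the rate is driven by finitely many macroscopic columns, the residual tail Gaussianizes, and the Marcinkiewicz-type uniqueness (non-Gaussianity, symmetry, and finiteness of all moments) is exactly what makes the rate function well-defined; this matches the paper's Lemmas~\ref{lem:linear_combinations} and~\ref{lem:linear_combinations_multidim}. However, the route you take — contracting the Stiefel LDP of Theorem~\ref{thm:ldp for stiefel manifold} through continuous truncations $\Psi_N$ and closing the gap via the extended contraction principle — has a structural obstruction, and the step you flagged as needing a ``more delicate argument'' does not merely need more work: it fails.

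Exponentially good approximation requires that for every $\delta>0$,
$$
\lim_{N\to\infty}\ \limsup_{n\to\infty}\ \frac 1n \log \Pro\Big[\rho_{\textrm{LP}}\big(\Psi_N(V_{k,n}),\widetilde\mu_{V_{k,n}}\big)>\delta\Big]=-\infty,
$$
and this is false. For a fixed realization $V$, the distance $\rho_{\textrm{LP}}(\Psi_N(V),\widetilde\mu_V)$ is governed by how far the tail sum $\sum_{j>N}C_j(V)Y_j$ is from its Gaussian surrogate; since $Y_1$ is non-Gaussian, this distance stays bounded away from $0$ whenever some column $C_j(V)$ with $j>N$ has macroscopic norm while the first $N$ columns and the remaining tail columns are small. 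But the columns of $V_{k,n}$ are exchangeable under Haar measure, so for every fixed $N$ the event $\{\|C_{N+1}(V_{k,n})\|_2>\eps\}$ has probability of exponential order $e^{-c(\eps)n}$ with $c(\eps)=-\tfrac12\log(1-\eps^2)$ \emph{independent of $N$}. Consequently the double limit above equals a finite negative number (roughly $\tfrac12\log(1-\eps(\delta)^2)$), uniformly in $N$, not $-\infty$. The root cause is that the map $V\mapsto\widetilde\mu_V$ from $[-1,1]^{k\times\infty}$ (product topology) to $\mathcal M_1(\R^k)$ forgets the positions of the columns and is not continuous, while the LDP of Theorem~\ref{thm:ldp for stiefel manifold} is blind to deviations at large column indices: a matrix with a single unit-order column at position $m$ converges to the zero matrix in the product topology as $m\to\infty$, yet the associated projection measures do not converge to the Gaussian.

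The paper therefore does not contract the matrix-valued LDP at all. Instead it proves a new LDP (Proposition~\ref{prop: LDP_for_nu_n}) directly for the \emph{unordered} symmetric column configuration $\eta_n=\sum_{j}\big(\delta_{C_j(V_{k,n})}+\delta_{-C_j(V_{k,n})}\big)$ on the compact space $\mathbb W_k$ of locally finite point measures on $[-1,1]^k\backslash\{0\}$. Working with $\eta_n$ rather than $V_{k,n}$ quotients out the column permutations from the start; the multinomial union bound over \emph{which} columns are macroscopic contributes a factor that is only polynomial in $n$ and hence negligible at speed $n$, and the inverted matrix-variate $t$-density again supplies the exponential estimates. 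The map $\Psi$ from $\mathbb W_k$ into $\mathcal M_1(\R^k)$ is then shown to be a continuous injection — hence a homeomorphism onto its compact image — and the LDP is transported exactly, with no approximation step needed. If you want to retain your truncation picture, the correct object to truncate is $\eta_n$, not $V_{k,n}$: that is essentially what the paper's proof of Proposition~\ref{prop: LDP_for_nu_n} does.
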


The previous theorem contains as special cases large deviation principles for a sequence of random $k$-dimensional projections of the  uniform distributions on the cubes $[-1,1]^n$ and the discrete cubes $\{-1,+1\}^n$, as $n\to\infty$.
The former example (which is the case $p=\infty$ missing in Theorem~\ref{thm:ldp multidimensional projection unif distr pball}) generalizes the main result in \cite{JKP2021} to multi-dimensional random projections. We shall show that the representation in~\eqref{eq:rep_law_sum_p_gauss_product_meas}, if it exists, is unique up to a signed permutation of the columns of $A$, which implies that the function $\rate$ is well defined. The assumption of non-Gaussianity is crucial for the uniqueness, as is the finiteness of \textit{all} moments of $Y_1$ (which cannot be replaced by the finiteness of \textit{some fixed} moment; see the discussion in Remark~\ref{rem:finite_moments_assumption}. The symmetry assumption on $Y_1$ will be discussed in Remark~\ref{rem:symmetry_assumption}. We shall prove that the set $\mathcal K_{k, Y_1}$ of probability measures $\nu$ admitting representation~\eqref{eq:rep_law_sum_p_gauss_product_meas} with $\|A A^*\|\leq 1$ is compact in the weak topology of $\mathcal M_1(\R^k)$.
%It is interesting to note that the uniqueness of the representation~\eqref{eq:rep_law_sum_p_gauss_product_meas} fails under the sole assumption of the finite second moment (and excluding the Gaussian distribution); see the comments after ????? for more details.

% % % % % % % % % % % % % % % % % % % % % %
\subsection*{Organization of the paper}
% % % % % % % % % % % % % % % % % % % % % %

The remainder of this manuscript is organized as follows. In Section \ref{sec:prelim} we introduce notation and some of the fundamental results and concepts used throughout the paper. The proof of the LDP on the Stiefel manifold (Theorem \ref{thm:ldp for stiefel manifold}) is presented in Section \ref{sec:proof of stiefel ldp}. In Section \ref{sec: Proof of Theorem B}, we prove the Theorem \ref{thm:ldp orthogonal group} on the LDP for the orthogonal group. The application to $k$-dimensional random projections of uniform distributions on $\ell_p$-balls is presented in Section \ref{sec:k-dimensional projections lp}, where we reduce this LDP to the one for projections of high-dimensional product distributions. We deal with the latter in the last part of the paper, Section \ref{sec:proof_theorem_D}.

% % % % % % % % % % % % % % % % % % % %
\section{Notation and preliminaries}\label{sec:prelim}
% % % % % % % % % % % % % % % % % % % %

Let us briefly recall (and complement) the basic notation used throughout this paper.
If $n\in\N$, then $\mathbb{S}^{n-1}:=\{x\in\R^n\,:\,\|x\|_2=1 \}$ is the Euclidean unit sphere, and the cube in $\R^n$ is denoted by $\B_\infty^n:=[-1,1]^n$. The standard inner product on $\R^n$ is denoted by $\langle \cdot, \cdot\rangle$. For a Borel measurable set $A\subset \R^n$, we denote by $\vol_n(A)$ its $n$-dimensional Lebesgue measure. For a set $A\subset\R^n$, we denote by $A^\circ$ and $\overline{A}$ its interior and closure, respectively. The group of orthogonal $n\times n$ matrices will be denoted by $\mathcal{O}(n)$ and we shall write $A^*$ for the adjoint of a matrix $A$.
Also, by $\id_{k\times k}$ denote the $k\times k$ identity matrix.
%and by $\id_{\R^k}:\R^k\to\R^k$ the identity map.
For a linear mapping $T:\R^d\to\R^n$ (for some $n\in\N$) we let $T^*$ be the adjoint operator satisfying $\langle Tx,y\rangle=\langle x,T^*y\rangle$ for all $x\in\R^d$ and $y\in\R^n$.
%We denote by $\mathcal M(m\times n,\R)$ the space of real $m\times n$ matrices.

% % % % % % % % % % % % % % % % % % % % % % % % % % %
\subsection{Elements from large deviation theory}
% % % % % % % % % % % % % % % % % % % % % % % % % % %
Let us continue with some notions and results from large deviation theory. For a thorough introduction to this topic, we refer the reader to \cite{DZ2010}.

\begin{df}\label{def:ldp}
Let $(\xi_n)_{n\in\N}$ be a sequence of random elements taking values in some metric space $M$. Further, let $(s_n)_{n\in\N}$ be a sequence of positive reals
%positive sequence
with $s_n\uparrow\infty$ and $\mathcal{I}:M\to[0,+\infty]$ be a lower semi-continuous function.
We say that $(\xi_n)_{n\in\N}$ satisfies a (full) large deviations principle (LDP) with speed $s_n$ and a rate function $\mathcal{I}$ if
\begin{equation}\label{eq:LDPdefinition}
\begin{split}
-\inf_{x\in A^\circ}\mathcal{I}(x)
\leq\liminf_{n\to\infty}{1\over s_n}\log\Pro\left[\xi_n \in A \right]
\leq\limsup_{n\to\infty}{1\over s_n}\log\Pro\left[\xi_n \in A \right]\leq -\inf_{x\in\overline{A}}\mathcal{I}(x)
\end{split}
\end{equation}
for all Borel sets $A\subset M$. The rate function $\mathcal I$ is called good if its lower level sets
%level sets
$\{x\in M\,:\, \mathcal{I}(x) \leq \alpha \}$ are compact for all finite $\alpha\geq 0$.
We say that $(\xi_n)_{n\in\N}$ satisfies a weak LDP with speed $s_n$ and rate function $\mathcal{I}$ if the rightmost upper bound in \eqref{eq:LDPdefinition} is valid only for compact sets $A\subset M$.
\end{df}

%\textcolor{red}{What separates a weak from a full LDP is the so-called exponential tightness of the sequence of random variables (see, e.g., \cite[Lemma 1.2.18]{DZ2010}).

%\begin{proposition}\label{prop:equivalence weak and full LDP}
%Let $(\xi_n)_{n\in\N}$ be a sequence of random elements taking values in a metric space $M$. Suppose that it satisfies a weak LDP with speed $s_n$ %and rate function $\mathcal{I}$. Then $(\xi_n)_{n\in\N}$ satisfies a full LDP if and only if the sequence is exponentially tight, that is, if and %only if for every $C\in(0,\infty)$ there exists a compact set $K_C\subset M$ such that
%$$
%\limsup_{n\to\infty}{1\over s_n}\log \Pro\left[\xi_n\notin K_C\right]<-C\,.
%$$
%In this case, the rate function $\mathcal I$ is good.
%\end{proposition}
%}

In our setting, all LDPs occur effectively on compact spaces, so that the notions of weak  and full LDP fall together.
The following result (see \cite[Theorems 4.1.11 and 4.1.18]{DZ2010}) shows that to prove a weak LDP it is sufficient (and necessary) to consider a  base of the underlying topology on a metric space.
\begin{proposition}\label{prop:basis topology}
Let $\mathcal T$ be a base of the topology in a metric space $M$. Let $(\xi_n)_{n\in\N}$ be a sequence of $M$-valued random elements and assume $s_n\uparrow\infty$.  If for every $w\in M$,
\[
\mathcal I(w)
:=
- \inf_{A\in\mathcal T:\, w\in A} \limsup_{n\to\infty} \frac 1 {s_n} \log \Pro \left[\xi_n \in A\right]
=
- \inf_{A\in\mathcal T:\, w\in A} \liminf_{n\to\infty} \frac 1 {s_n} \log \Pro \left[\xi_n \in A\right],
\]
then $(\xi_n)_{n\in\N}$ satisfies a weak LDP with speed $s_n$ and rate function $\mathcal{I}$. Conversely, if $(\xi_n)_{n\in\N}$ satisfies a weak LDP with speed $s_n$ and rate function $\mathcal{I}$, then the above identities hold.
\end{proposition}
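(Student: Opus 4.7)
The plan is to recognize this proposition as a repackaging of the standard base-wise characterization of weak LDPs~\cite[Theorems~4.1.11, 4.1.18]{DZ2010} and to verify the forward and converse implications by treating the upper and lower LDP bounds separately.

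For the forward implication, I would first establish the lower bound. Given an open $G \subseteq M$ and $w \in G$, the base property yields $A \in \mathcal T$ with $w \in A \subseteq G$, so monotonicity gives $\liminf_{n\to\infty}\frac{1}{s_n}\log \Pro[\xi_n \in G] \geq \liminf_{n\to\infty}\frac{1}{s_n}\log \Pro[\xi_n \in A]$. Taking the supremum of the right-hand side over $A \in \mathcal T$ with $w \in A$ and invoking the hypothesized identity (rewritten as $\sup = -\inf$) produces $-\mathcal I(w)$; optimizing over $w \in G$ then delivers the full lower LDP bound. For the upper bound on a compact $K \subseteq M$, I would fix $\alpha < \inf_{w \in K}\mathcal I(w)$, and for each $w \in K$ use the $\limsup$ form of the hypothesis to attach a base neighborhood $A_w \in \mathcal T$ with $\limsup_{n\to\infty}\frac{1}{s_n}\log \Pro[\xi_n \in A_w] < -\alpha$. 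Compactness supplies a finite subcover $A_{w_1},\ldots,A_{w_N}$, after which the union bound together with the principle of the largest term yields $\limsup_{n\to\infty}\frac{1}{s_n}\log \Pro[\xi_n \in K] \leq -\alpha$; letting $\alpha \uparrow \inf_K \mathcal I$ completes this half. Lower semicontinuity of $\mathcal I$ is automatic: for $w_m \to w$ and any $A \in \mathcal T$ containing $w$, the sequence $w_m$ eventually enters $A$, so the pointwise bound $\mathcal I(w_m) \geq -\limsup_{n\to\infty}\frac{1}{s_n}\log \Pro[\xi_n \in A]$ survives in the limit, and taking the supremum over $A \ni w$ gives $\liminf_m \mathcal I(w_m) \geq \mathcal I(w)$.

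The converse is almost tautological: assuming the weak LDP, for every $A \in \mathcal T$ with $w \in A$ one reads off $-\inf_{x \in A}\mathcal I(x) \leq \liminf_{n\to\infty}\frac{1}{s_n}\log \Pro[\xi_n \in A] \leq \limsup_{n\to\infty}\frac{1}{s_n}\log\Pro[\xi_n \in A] \leq -\inf_{x \in \overline A}\mathcal I(x)$, and passing to the infimum over a neighborhood base at $w$ contained in $\mathcal T$, combined with the characterization $\mathcal I(w) = \sup_{U \ni w,\, U \text{ open}}\inf_{x \in U}\mathcal I(x)$ of lower semicontinuous functions in a metric space, recovers both identities in the statement. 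The main technical obstacle is the upper LDP bound in the forward direction: the restriction to base elements before invoking compactness is precisely what makes the hypothesis --- formulated only on $\mathcal T$ --- sufficient. The remaining ingredients are monotonicity, the principle of the largest term, and the lower semicontinuity argument above, all of which are routine.
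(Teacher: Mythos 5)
The paper does not prove this proposition; it is quoted as a citation to \cite[Theorems 4.1.11, 4.1.18]{DZ2010}, and your proposal is a faithful reconstruction of the standard argument behind those theorems. Your forward direction is correct. A small bookkeeping remark: for the lower bound one does not take a supremum over $A$; rather, one uses that \emph{every} base element $A$ with $w\in A\subseteq G$ already satisfies $\liminf_n \frac1{s_n}\log\Pro[\xi_n\in A]\ge \inf_{A'\ni w}\liminf_n\frac1{s_n}\log\Pro[\xi_n\in A'] = -\mathcal I(w)$, which then passes to $G$ by monotonicity; the phrase ``rewritten as $\sup=-\inf$'' muddles this but the conclusion stands.

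There is a genuine (though minor) gap in your converse, which the paper's own one-line statement also glosses over. You invoke $\limsup_n\frac1{s_n}\log\Pro[\xi_n\in A]\le -\inf_{x\in \overline A}\mathcal I(x)$. Under a \emph{weak} LDP, the upper bound is guaranteed only for \emph{compact} sets, and $\overline A$ need not be compact in a general metric space; this step therefore uses the full LDP, which is indeed the hypothesis of \cite[Theorem~4.1.18]{DZ2010}. The authors sidestep this by noting immediately after the proposition that they only apply it on compact $M$, where closed and compact coincide and the two notions of LDP agree, so nothing is lost in context, but your proof as written does not establish the converse from a weak LDP alone. A second small point in the converse: to turn the chain of inequalities into the identity $\mathcal I(w)=-\inf_A\limsup_A$, you need $\sup_{A\in\mathcal T,\,A\ni w}\inf_{\overline A}\mathcal I = \mathcal I(w)$, not merely the lsc characterization $\sup_{U\ni w}\inf_U\mathcal I=\mathcal I(w)$. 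This requires the (metric-space) regularity observation that every open $U\ni w$ contains a base element $A\ni w$ with $\overline A\subseteq U$ (e.g.\ take $A\subseteq B_{\eps/2}(w)$ so $\overline A\subseteq B_\eps(w)$), so that $\inf_{\overline A}\mathcal I\ge \inf_U\mathcal I$; your write-up skips this bridging step.
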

In the present paper we will apply Proposition~\ref{prop:basis topology} to compact spaces $M$ only, so that each closed subset is compact and there is no difference between a weak LDP and a full LDP.

% % % % % % % % % % % % % % % % % % % % % % % % % % % % % % % % % % % % % % %
\subsection{Projective limits and the Dawson--G\"artner theorem}
% % % % % % % % % % % % % % % % % % % % % % % % % % % % % % % % % % % % % % %

Recall that a projective system $(\mathcal Y_j,p_{ij})_{i\leq j}$ consists of Hausdorff topological spaces $\mathcal Y_j$, $j\in\N$, and continuous mappings $p_{ij}:\mathcal Y_j \to\mathcal Y_i$ such that
\[
\forall i\leq j \leq k:\qquad p_{ik} = p_{ij}\circ p_{jk},
\]
where $p_{jj}$, $j\in\N$, is the identity mapping of $\mathcal Y_j$. Then the projective limit $\mathcal X$ of this system is given by
\[
\mathscr X:= \lim_{\longleftarrow} \mathcal Y_j :=\Bigg\{  y=(y_j)_{j\in\N} \in \mathcal Y:= \prod_{j\in\N}\mathcal Y_j\,:\, y_i = p_{ij}(y_j)\,\,\forall i<j \Bigg\},
\]
i.e., it is the subset of the topological product space $\mathcal Y = \prod_{j\in\N}\mathcal Y_j$, consisting of all elements $x=(y_j)_{j\in\N}$ for which $y_i = p_{ij}y_j$ whenever $i\leq j$ equipped with the topology induced by $\mathcal Y$. For $j\in\N$, we shall denote by $p_j:\mathcal X\to\mathcal Y_j$ the canonical projections of $\mathcal X$, which are the restrictions of the coordinate maps from $\mathcal Y$ to $\mathcal Y_j$. Note that each $p_j$, $j\in\N$, is a continuous mapping.

The following theorem is due to Dawson and G\"artner \cite{DG1987}, which we use as formulated in \cite[Theorem 4.6.1]{DZ2010}, but restrict ourselves to the case of sequences of probability measures rather than families.

\begin{proposition}[Dawson--G\"artner theorem]\label{prop:dawson-gaertner}
Let $\mathscr X$ be the projective limit of the system $\mathcal Y_\ell$, $\ell\in\N$, of Hausdorff topological spaces. Assume that $(\mu_n)_{n\in\N}$ is a sequence of probability measures on $\mathscr X$ such that for any $\ell\in\N$, the pushforward-sequence $(\mu_{n}\circ p_\ell^{-1})_{n\in\N}$ satisfies an LDP on $\mathcal Y_\ell$ at speed $n$ with good rate function $\rate_\ell$. Then $(\mu_n)_{n\in\N}$ satisfies an LDP at speed $n$ with the good rate function $\rate:\mathcal X \to [0,+\infty]$
\[
\rate(x) :=\sup_{\ell\in\N} \rate_\ell\big(p_\ell(x)\big).
\]
\end{proposition}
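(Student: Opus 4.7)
The plan is to verify directly the two bounds in the definition of an LDP together with goodness of the rate function, exploiting the cylindrical base of the projective topology. First I would record a key compatibility property. For $\ell \leq j$ we have $p_\ell = p_{\ell j} \circ p_j$, so the contraction principle applied to the LDP for $\mu_n \circ p_j^{-1}$ under the continuous map $p_{\ell j}$ produces an LDP on $\mathcal Y_\ell$ with good rate function $y_\ell \mapsto \inf\{\mathcal I_j(y_j) : p_{\ell j}(y_j) = y_\ell\}$. By the hypothesis $\mu_n \circ p_\ell^{-1}$ also satisfies an LDP with good rate $\mathcal I_\ell$, and since good rate functions on Hausdorff spaces are unique, the two coincide. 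This yields the monotonicity $\mathcal I_\ell(p_\ell x) \leq \mathcal I_j(p_j x)$ for $\ell \leq j$, so $\mathcal I(x)=\sup_\ell \mathcal I_\ell(p_\ell x)$ is a monotone (i.e.\ directed) supremum, which is what makes the minimax step in the upper bound tractable.

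Next I would verify that $\mathcal I$ is a good rate function. It is lower semi-continuous as a supremum of lower semi-continuous maps $x \mapsto \mathcal I_\ell(p_\ell x)$. For compactness of level sets, observe
\[
\{x \in \mathscr X : \mathcal I(x) \leq \alpha\} = \bigcap_{\ell \in \N} p_\ell^{-1}\bigl(\{y \in \mathcal Y_\ell : \mathcal I_\ell(y) \leq \alpha\}\bigr),
\]
which embeds as a closed subset of the product $\prod_\ell \{\mathcal I_\ell \leq \alpha\}$ of compact sets; this product is compact by Tychonoff's theorem, so the level set is compact in $\mathscr X$. The LDP lower bound is the easy half: the sets $p_\ell^{-1}(U)$ with $U\subset\mathcal Y_\ell$ open form a base of the projective topology, so by Proposition~\ref{prop:basis topology} it suffices to show, for any $x$ and any basic neighborhood $p_\ell^{-1}(U)$ of $x$, that $\liminf_n \tfrac{1}{n} \log \mu_n(p_\ell^{-1}(U)) \geq -\mathcal I(x)$. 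Since $p_\ell(x) \in U$, the individual LDP on $\mathcal Y_\ell$ gives $\liminf_n \tfrac{1}{n}\log(\mu_n \circ p_\ell^{-1})(U) \geq -\mathcal I_\ell(p_\ell x) \geq -\mathcal I(x)$, as required.

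The main obstacle is the upper bound, because the trivial estimate $\sup_\ell \inf_{x\in K} \mathcal I_\ell(p_\ell x) \leq \inf_{x\in K}\mathcal I(x)$ goes the wrong way. I would handle this with a finite-subcover plus monotonicity argument. Fix a compact $K \subset \mathscr X$ and any $\alpha < \inf_{x \in K}\mathcal I(x)$. For each $x \in K$ choose $\ell(x)$ with $\mathcal I_{\ell(x)}(p_{\ell(x)}x) > \alpha$; by lower semi-continuity, the cylinder $p_{\ell(x)}^{-1}(\{\mathcal I_{\ell(x)} > \alpha\})$ is an open neighborhood of $x$. Extract a finite subcover with indices $\ell_1,\dots,\ell_m$ and put $L := \max_i \ell_i$. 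The monotonicity $\mathcal I_{\ell_i}(p_{\ell_i}x) \leq \mathcal I_L(p_L x)$ forces $\mathcal I_L(p_L x) > \alpha$ for every $x \in K$, so $p_L(K)$ is a compact subset of $\{\mathcal I_L > \alpha\}$, and the LDP upper bound for $\mu_n \circ p_L^{-1}$ gives
\[
\limsup_n \tfrac{1}{n}\log\mu_n(K) \leq \limsup_n \tfrac{1}{n}\log(\mu_n \circ p_L^{-1})(p_L K) \leq -\inf_{y\in p_L K}\mathcal I_L(y)\leq -\alpha.
\]
Letting $\alpha\nearrow \inf_K\mathcal I$ closes the compact upper bound, hence yields a weak LDP.

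To upgrade from a weak LDP to a full LDP (closed sets, not just compact ones), I would establish exponential tightness of $(\mu_n)$ on $\mathscr X$. Goodness of each $\mathcal I_\ell$ together with the individual LDP implies exponential tightness of $(\mu_n \circ p_\ell^{-1})$ on $\mathcal Y_\ell$; for each $\alpha>0$ and each $\ell$ pick a compact $K_\ell^{(\alpha)}\subset \mathcal Y_\ell$ with $\limsup_n \tfrac{1}{n}\log(\mu_n \circ p_\ell^{-1})(\mathcal Y_\ell \setminus K_\ell^{(\alpha)}) < -(\alpha+\ell)$, and set $K^{(\alpha)} := \mathscr X \cap \prod_\ell K_\ell^{(\alpha)}$, which is compact in $\mathscr X$ by Tychonoff. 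A union bound and the geometric series yield $\limsup_n \tfrac{1}{n}\log\mu_n(\mathscr X\setminus K^{(\alpha)}) \leq -\alpha$, giving exponential tightness. In the setting of the present paper this last step is not even needed, since each $\mathcal Y_\ell$ (and hence $\mathscr X$) is compact.
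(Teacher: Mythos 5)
First, note that the paper does not prove this proposition at all: it is quoted from Dembo--Zeitouni (Theorem 4.6.1), so your argument has to be measured against that standard proof. Your weak-LDP core is correct and takes a genuinely different route in one place: you get the upper bound on \emph{compact} sets from the monotonicity $\mathcal I_\ell(p_\ell x)\le \mathcal I_j(p_j x)$ (contraction principle plus uniqueness of rate functions) combined with a finite-subcover argument, whereas the cited proof obtains the upper bound for arbitrary \emph{closed} sets $F$ directly, using the identity $F=\bigcap_{\ell} p_\ell^{-1}\big(\overline{p_\ell(F)}\big)$ valid for closed sets in the projective limit topology together with a nested-nonempty-compacta argument (resting on goodness of the $\mathcal I_\ell$ and the same monotonicity) which shows $\sup_\ell \inf_{\overline{p_\ell(F)}}\mathcal I_\ell \ge \inf_F \mathcal I$. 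Your treatment of the lower bound, lower semicontinuity and goodness matches the standard one (for the level sets you implicitly use that $\mathscr X$ is closed in $\prod_\ell \mathcal Y_\ell$, which holds because the $\mathcal Y_\ell$ are Hausdorff). One small caveat: uniqueness of rate functions requires regularity of $\mathcal Y_\ell$, not mere Hausdorffness; this is harmless here because the paper only applies the proposition to compact metric spaces.

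The genuine gap is in the upgrade from the weak to the full LDP. The claim that an LDP with a good rate function implies exponential tightness is false on general Hausdorff, and even on general metric, spaces (it is a Polish/locally compact phenomenon), so for the proposition as stated — arbitrary Hausdorff $\mathcal Y_\ell$ and an upper bound for all closed sets — your argument does not close. (Your tightness construction has the secondary problem that the bound $(\mu_n\circ p_\ell^{-1})(\mathcal Y_\ell\setminus K_\ell^{(\alpha)})\le e^{-n(\alpha+\ell)}$ only holds for $n$ beyond an $\ell$-dependent threshold, so the union bound over infinitely many $\ell$ needs extra care.) The standard proof avoids exponential tightness entirely by handling closed sets directly as described above. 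As you correctly observe, the paper only invokes the proposition with compact $\mathcal Y_\ell$, hence compact $\mathscr X$, where weak and full LDPs coincide; so your proof covers every use made of the result in this paper, but it is not a proof of the general statement.
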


% % % % % % % % % % % % % % % %
\subsection{Uniform distribution on Stiefel manifolds} \label{subsec:stiefel_manifold}
% % % % % % % % % % % % % % % %
For $n,k\in\N$ with $k\leq n$, the Stiefel manifold $\mathbb V_{k,n}$ (over $\R$) is defined as the set of all orthonormal $k$-frames in $\R^n$, i.e., the set of all ordered $k$-tuples of orthonormal vectors in Euclidean space $\R^n$. Alternatively, the Stiefel manifold can be thought of as the set of $k\times n$ matrices and a $k$-frame $u_1,\ldots,u_k$ is represented as a matrix with the $k$ rows $u_1,\ldots,u_k\in\R^n$. Formally, this means that
\[
\mathbb V_{k,n} = \big\{ V \in\R^{k\times n}\,:\, VV^* = \id_{k\times k}  \big\}.
\]

We denote by $\mu_{k,n}$ the Haar probability measure on the Stiefel manifold $\mathbb V_{k,n}$, i.e., the unique probability measure on $\mathbb V_{k,n}$ which is invariant under the two-sided action of the product of the orthogonal groups $\mathcal{O}(n)\times \mathcal O(k)$. We refer to this measure as the uniform distribution on the Stiefel manifold. So, if $V_{k,n}$ is a random matrix uniformly distributed on $\mathbb V_{k,n}$, then $U'V_{k,n} U''$ has the same distribution as $V_{k,n}$ for every $U'\in\mathcal O(n)$ and $U''\in\mathcal O(k)$. In order to generate a random matrix $V_{k,n}$ distributed according to $\mu_{k,n}$ one can proceed as follows: generate the first row $u_1$ of $V_{k,n}$ according to the uniform distribution on the unit sphere $\mathbb S^{n-1}$, then generate the  second row $u_2$ according to the uniform distribution on $\mathbb S^{n-1} \cap u_1^\bot$, then generate the third row according to the uniform distribution on $\mathbb S^{n-1} \cap u_1^\bot \cap u_2^\bot$, and so on.

Various characterizations of the uniform distribution on $\mathbb V_{k,n}$ can be found in~\cite[Section~8.2]{GN2000}. In particular, the next result shows how to generate it using Gaussian random matrices; see~\cite[Theorem~8.2.5]{GN2000}, \cite[Lemma~5]{Khatri1970a} or~\cite[Lemma 2.1]{KPT2019_cube}.

\begin{lemma}\label{lem:uniform distribution Stiefel manifold}
Let $k,n\in\N$ and assume that $n \geq k$. Consider a Gaussian random matrix $G_{k,n}=(g_{ij})_{i,j=1}^{k,n}:\R^n\to\R^k$ with independent standard normal entries. Then the random matrix
$$
V_{k,n} = (G_{k,n}G_{k,n}^*)^{-1/2}G_{k,n}:\R^n\to\R^k
$$
is uniformly distributed on the Stiefel manifold $\mathbb V_{k,n}$.
%Moreover, the radial part $(GG^*)^{1/2}$ and $G^*(GG^*)^{-1/2}$ are independent \textcolor{red}{(J: Not sure the latter is really true! But we don't need it anyway!)}.
\end{lemma}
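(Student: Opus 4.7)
The plan is to invoke the standard characterization of the Haar measure $\mu_{k,n}$ as the \emph{unique} probability measure on $\mathbb V_{k,n}$ invariant under the left-right action of $\mathcal O(k)\times \mathcal O(n)$, and then verify this invariance for the distribution of $V_{k,n}$ directly from the orthogonal invariance of the Gaussian law on $\R^{k\times n}$.

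First I would check that the construction makes sense. Since $n\ge k$ and the entries of $G_{k,n}$ are i.i.d.\ standard Gaussians, the matrix $G_{k,n}$ has full row rank almost surely, so $G_{k,n}G_{k,n}^*$ is a.s.\ a positive definite $k\times k$ matrix and possesses a (unique) positive definite square root, whose inverse gives $(G_{k,n}G_{k,n}^*)^{-1/2}$. A short direct computation then yields
\[
V_{k,n}V_{k,n}^*
= (G_{k,n}G_{k,n}^*)^{-1/2}\, G_{k,n}G_{k,n}^*\, (G_{k,n}G_{k,n}^*)^{-1/2}
= \id_{k\times k},
\]
so $V_{k,n}\in \mathbb V_{k,n}$ almost surely and its law is a well-defined probability measure on the Stiefel manifold.

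Next I would verify invariance under right multiplication by $U'\in\mathcal O(n)$. The columns of $G_{k,n}$ are i.i.d.\ standard Gaussian vectors in $\R^k$, so right multiplication by $U'$ (which acts as an orthogonal change of basis among the columns) leaves the distribution of $G_{k,n}$ unchanged; equivalently, $G_{k,n}U'$ has the same law as $G_{k,n}$. Since $(G_{k,n}U')(G_{k,n}U')^* = G_{k,n}G_{k,n}^*$, applying the map $G\mapsto (GG^*)^{-1/2} G$ to $G_{k,n}U'$ produces $V_{k,n}U'$, and hence $V_{k,n}U' \stackrel{d}{=} V_{k,n}$. For left multiplication by $U''\in\mathcal O(k)$, the rows of $G_{k,n}$ are i.i.d.\ standard Gaussians in $\R^n$ and the law of $G_{k,n}$ is invariant under left multiplication by any fixed orthogonal $k\times k$ matrix, so $U''G_{k,n}\stackrel{d}{=} G_{k,n}$. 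Using the identity $(U''MU''^*)^{-1/2}=U''M^{-1/2}U''^*$ valid for any positive definite $M$ and $U''\in\mathcal O(k)$, one computes
\[
\bigl((U''G_{k,n})(U''G_{k,n})^*\bigr)^{-1/2} U''G_{k,n}
= U''(G_{k,n}G_{k,n}^*)^{-1/2}U''^* U'' G_{k,n}
= U''V_{k,n},
\]
so the construction is equivariant with respect to the $\mathcal O(k)$-action, and therefore $U''V_{k,n}\stackrel{d}{=} V_{k,n}$.

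Combining the two invariances gives that the distribution of $V_{k,n}$ on $\mathbb V_{k,n}$ is invariant under the two-sided action of $\mathcal O(n)\times \mathcal O(k)$, and by the uniqueness of Haar measure recalled at the start of Section~\ref{subsec:stiefel_manifold}, this distribution must equal $\mu_{k,n}$. The only mildly delicate point is the identity $(U''MU''^*)^{-1/2}=U''M^{-1/2}U''^*$, which follows immediately by diagonalizing $M$ and using the functional calculus for the (unique) positive square root; everything else is routine manipulation of Gaussian invariance.
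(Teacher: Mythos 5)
Your proof is correct. Note that the paper does not prove this lemma but cites it (from Gupta--Nagar, Khatri, and Kanter--Prochno--Th\"ale); the invariance argument you give is the standard one and matches how those sources establish the result. The two invariance computations are right: $(G_{k,n}U')(G_{k,n}U')^* = G_{k,n}G_{k,n}^*$ gives equivariance on the right, and the identity $(U''MU''^*)^{-1/2}=U''M^{-1/2}U''^*$ (valid for positive definite $M$ and orthogonal $U''$ by functional calculus) gives equivariance on the left; combined with the a.s.\ positive definiteness of $G_{k,n}G_{k,n}^*$ for $n\ge k$ and the uniqueness of the two-sided invariant probability measure on $\mathbb V_{k,n}$ stated in Section~\ref{subsec:stiefel_manifold}, the conclusion follows.
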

%\begin{rmk}
%The polar decomposition states that for any linear operator $T:\R^n\to\R^k$, $n\geq k$ there exists a linear isometry $J :\R^k\to\R^n$ (i.e., an %isometric embedding, or, equivalently, an isometric isomorphism onto its image) such that
%\[
%T^* \,=\, J (TT^*)^{1/2}.
%\]
%We will be working in the setting where $T=G_{k,n}$ is a random standard Gaussian matrix.
%In our setting, taking $T=G$, $J$ is the isometric embedding associated with the operator $G$.
%\end{rmk}

% % % % % % % % % % % % % % % % % % % % % % % % % % % % % % % % % % % % % %
\subsection{Inverted matrix variate \texorpdfstring{$t$}{t}-distribution \& Wishart distribution}
% % % % % % % % % % % % % % % % % % % % % % % % % % % % % % % % % % % % % %
Next we recall some facts on matrix-variate distributions from~\cite{GN2000}.
A random $k\times m$-matrix is said to have an inverted matrix variate $t$-distribution~\cite[Definition~4.4.1]{GN2000} with $n$ degrees of freedom if it has density
\begin{equation}\label{eq:density inverted matrix variate t-distr}
A \mapsto  \frac{\Gamma_k(\frac{n+m+k-1}{2})}{\pi^{\frac{mk}{2}}\Gamma_k(\frac{n+k-1}{2})} \det\Big(\id_{k\times k} - AA^*\Big)^{\frac{n-2}{2}},
\qquad
A\in \R^{k\times m},
\qquad \|AA^*\| <1.
\end{equation}
%Note that condition $\|AA^*\| < 1$ holds if and only if $\id_{k\times k}- AA^*$ is positive definite.
Here, $\Gamma_k$ denotes the multivariate gamma function, which is defined as
%\begin{equation}\label{eq:density inverted matrix variate t-distr}
%\R^{p\times m}\ni A \mapsto  \frac{\Gamma_p(\frac{n+m+p-1}{2})}{\pi^{\frac{mp}{2}}\Gamma_p(\frac{n+p-1}{2})} %\det(\Sigma_1)^{-\frac{m}{2}}\det(\Sigma_2)^{-\frac{p}{2}} \det\Big(\id_{p\times p} - %\Sigma_1^{-1}(A-M)\Sigma_2^{-1}(A-M)^*\Big)^{\frac{n-2}{2}},
%\end{equation}
%where $\id_{p\times p} - \Sigma_1^{-1}(A-M)\Sigma_2^{-1}(A-M)^*$ is positive definite. Here, $\Gamma_p$ denotes the multivariate gamma function, which is defined as
\begin{equation}\label{eq:gamma_function_generalized}
\Gamma_k(x) = \pi^{\frac{k(k-1)}{4}}\prod_{i=1}^{k}\Gamma\Big(x-\frac{i-1}{2}\Big),
\end{equation}
where $\Re(x)>\frac{1}{2}(k-1)$; see~\cite[Theorem~1.4.1]{GN2000}.
This distribution, denoted by $\mathrm{IT}_{k,m}(n,0,\id_{k\times k},\id_{m\times m})$, is a special case of a more general family derived by Khatri in~\cite{K1959}, see~\cite[Definition~4.4.1]{GN2000}, with parameters $M = 0\in\R^{k\times m}$, $\Sigma = \id_{k\times k}$, and $\Omega = \id_{m\times m}$.

%In the following, we shall need only the special case when $M=0$ and $\Sigma_1,\Sigma_2$ are identity matrices.  We refer the reader to \cite[Section 4]{GN2000} for more information.

The inverted matrix variate $t$-distribution is related to the Wishart distribution. Recall~\cite[Definition~3.2.1]{GN2000} that a $k\times k$ random symmetric positive definite matrix $S$ is said to have a Wishart distribution with parameters $k$,$n\in\N$ with $n\geq k$, and $\Sigma\in\R^{k\times k}$ a symmetric positive definite matrix, we write $S\sim W_k(n,\Sigma)$, if it has density
\[
A \mapsto \frac{\det(A)^{\frac{n-k-1}{2}}}{2^{\frac{nk}{2}}\Gamma_k(\frac{n}{2})\det(\Sigma)^{\frac{n}{2}}} e^{-\frac{1}{2}\Tr(\Sigma^{-1}A)}.
\]
on the set of symmetric positive definite $k\times k$  matrices. It is known~\cite[Theorem~3.2.2]{GN2000} that if $n\geq k$ and $H_{k,n}$ is a $k\times n$ Gaussian random matrix such that each column $(H_{k,n}(i,j))_{i=1}^k$, $j\in\{1,\dots,n\}$ has $k$-variate Gaussian distribution $\mathcal N_k(0,\Sigma)$ with $k\times k$ covariance matrix $\Sigma$, then
\begin{equation}\label{eq:wishart and gauss}
S=H_{k,n}H_{k,n}^* \sim W_k(n,\Sigma).
\end{equation}

The following result is due to Dickey \cite{D1967} and relates the Wishart distribution to the inverted matrix variate $t$-distribution; see~\cite[Theorem 4.4.1]{GN2000}.

\begin{proposition}\label{prop:relation inverse t and wishart}
Assume that $S\sim W_k(N+k-1,\id_{k\times k})$ and $G_{k,m}$ is a standard Gaussian random $k\times m$-matrix independent from $S$, where $k,m,N\in \N$.  Then,
\[
T:= (S+G_{k,m}G_{k,m}^*)^{-\frac{1}{2}}G_{k,m} \sim \mathrm{IT}_{k,m}(N,0,\id_{k\times k},\id_{m\times m}).
\]
\end{proposition}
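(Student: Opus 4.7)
The plan is to compute the density of $T$ directly via a change of variables in the joint density of $(S, G)$, where I abbreviate $G := G_{k,m}$. Since $S$ and $G$ are independent, the joint density $f_{S,G}(s,g) = f_S(s)f_G(g)$ depends on its arguments only through $\det(s)$, $\Tr(s)$, and $\Tr(gg^*)$; here $f_S$ is the $W_k(N+k-1,\id_{k\times k})$ density and $f_G$ is the standard Gaussian density on $\R^{k\times m}$.

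I would introduce the bijection $(S,G)\leftrightarrow(W,T)$ given by $W := S + GG^*$ and $T := W^{-1/2}G$, with inverse $G = W^{1/2}T$ and $S = W^{1/2}(\id_{k\times k} - TT^*)W^{1/2}$. The requirement $S\succ 0$ forces $T$ to range over $\{t \in \R^{k\times m} : \|tt^*\|<1\}$, while $W$ ranges over the positive-definite cone. Two substitution identities are immediate from cyclicity of the trace and multiplicativity of the determinant:
\[
\Tr(S) + \Tr(GG^*) = \Tr(W),
\qquad
\det(S) = \det(W)\,\det(\id_{k\times k} - TT^*).
\]

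The heart of the argument is the Jacobian, and I would factor the map into two simple substeps. First, $(S,G)\mapsto(W,G)$ at fixed $G$ is the translation $S\mapsto S + GG^*$ on the space of symmetric $k\times k$ matrices, so its Jacobian is $1$. Second, $(W,G)\mapsto(W,T)$ at fixed $W$ is the linear map $G\mapsto W^{-1/2}G$ on $\R^{k\times m}$; as left multiplication by the $k\times k$ matrix $W^{-1/2}$, its determinant is $\det(W^{-1/2})^m = \det(W)^{-m/2}$. Combining, $\dint S\,\dint G = \det(W)^{m/2}\dint W\,\dint T$, and substituting the two identities above into $f_S(s)f_G(g)$ yields
\[
f_{W,T}(w,t) \;\propto\; \det(w)^{(N+m-2)/2}\,\det(\id_{k\times k} - tt^*)^{(N-2)/2}\,e^{-\Tr(w)/2}.
\]

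Finally, I would recognize the $w$-factor as proportional to the density of $W_k(N+k-1+m,\id_{k\times k})$ and integrate it out over the positive-definite cone, producing the explicit normalization $2^{(N+k-1+m)k/2}\,\Gamma_k\!\left(\tfrac{N+k-1+m}{2}\right)$. Collecting this with the original Wishart and Gaussian normalizing constants (and using \eqref{eq:gamma_function_generalized}) gives
\[
f_T(t) = \frac{\Gamma_k\!\left(\tfrac{N+m+k-1}{2}\right)}{\pi^{mk/2}\,\Gamma_k\!\left(\tfrac{N+k-1}{2}\right)}\,\det(\id_{k\times k} - tt^*)^{(N-2)/2},
\]
which is precisely the density~\eqref{eq:density inverted matrix variate t-distr} with $n = N$. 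The main technical point is the Jacobian of the second substep; the two-step factorization avoids having to differentiate $W^{-1/2}$ with respect to the entries of $W$, which would otherwise be the awkward calculation.
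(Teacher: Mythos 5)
Your proof is correct. The paper itself does not prove this proposition; it simply cites Dickey (1967) and Gupta--Nagar (2000), so there is no in-paper argument to compare against. Your change-of-variables derivation is the standard one for such matrix-variate density computations: the two-step factorization $(S,G)\mapsto(W,G)\mapsto(W,T)$ exploits block-triangularity of the Jacobian to sidestep differentiating $W^{-1/2}$ with respect to $W$, and the resulting factor $\det(W)^{m/2}$ is right. The substitution identities for $\det(S)$ and $\Tr(S)+\Tr(GG^*)$, the identification of the constraint $\|TT^*\|<1$, the integration over the Wishart cone, and the cancellation of the powers of $2$ and $\pi$ all check out, producing exactly the density~\eqref{eq:density inverted matrix variate t-distr} with $n=N$. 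One bonus worth noting that your computation delivers for free: the joint density factors, so $T$ and $W=S+G_{k,m}G_{k,m}^*$ are independent; this is the fact (attributed in the paper to Khatri) that is used later in the proof of Proposition~\ref{prop: LDP_for_nu_n}.
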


% % % % % % % % % % % % % % % % % % % % % % % %
\subsection{Probability on \texorpdfstring{$\ell_p^n$}{lpn}-balls}
% % % % % % % % % % % % % % % % % % % % % % % %

The proof of Theorem~\ref{thm:ldp multidimensional projection unif distr pball} regarding the LDP for projections of uniform distributions on $\ell_p^n$-balls $\B_p^n := \{x\in\R^n \,:\, \|x\|_p\leq 1 \}$ relies on the following probabilistic representation for the uniform distribution on $\B_p^n$, which is due to Schechtman and Zinn \cite{SchechtmanZinn}.

\begin{proposition}\label{prop:schechtman zinn}
Let $n\in\N$ and $p\in[1,\infty)$. Suppose that $Z_1,\ldots,Z_n$ are independent $p$-generalized Gaussian random variables whose distribution has density
$$
f_p(x):= {1\over 2p^{1/p}\Gamma(1+{1\over p})}\,e^{-|x|^p/p}
$$
with respect to the Lebesgue measure on $\R$. Then, if $U$ is a random variable uniformly distributed on $[0,1]$ and independent of $Z:=(Z_1,\ldots,Z_n)\in\R^n$, the random vector $U^{1/n} Z/\|Z\|_p$ is uniformly distributed on $\B_p^n$.
\end{proposition}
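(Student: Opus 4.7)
The plan is to exploit a polar decomposition of Lebesgue measure adapted to the $\ell_p$-ball, combined with the fact that the joint density of $(Z_1,\dots,Z_n)$ depends on $z\in\R^n$ only through $\|z\|_p^p$. First I would set up the \emph{cone measure} $\sigma_p$ on $\partial \B_p^n$, defined as the probability measure
$$
\sigma_p(A) := \frac{\vol_n\big(\{rw : r\in [0,1],\,w\in A\}\big)}{\vol_n(\B_p^n)},\qquad A\subset \partial \B_p^n,
$$
and verify the integration formula
$$
\int_{\R^n} f(z)\dif z = n\,\vol_n(\B_p^n) \int_0^\infty r^{n-1} \int_{\partial \B_p^n} f(rw)\dif\sigma_p(w)\dif r,
$$
which by a monotone class argument reduces to checking it on cylinder sets $\{z:\|z\|_p\leq R,\,z/\|z\|_p\in A\}$, on which both sides equal $R^n\vol_n(\B_p^n)\sigma_p(A)$.

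Second, I would apply this formula to the joint density $\prod_{i=1}^n f_p(z_i) = c_n\, e^{-\|z\|_p^p/p}$ of $Z$: because the integrand is a function of $\|z\|_p$ alone, the change of variables separates, showing that under the polar coordinates $r=\|Z\|_p,\, W=Z/\|Z\|_p$ the pair $(r,W)$ is independent, with $W\sim\sigma_p$ and $r$ having some density $g(r)$ on $(0,\infty)$ (whose exact form is irrelevant). Third, since $U$ is independent of $Z$, I would write
$$
Y := U^{1/n}\,\frac{Z}{\|Z\|_p} = U^{1/n}\cdot W,
$$
observe that $\|Y\|_p = U^{1/n}$ has density $nr^{n-1}$ on $[0,1]$ (immediate from $\Pro[U^{1/n}\le r]=r^n$), and that $Y/\|Y\|_p = W\sim \sigma_p$ is independent of $\|Y\|_p$. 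Finally, applying the polar integration formula again to $f\equiv \vol_n(\B_p^n)^{-1}\mathbbm 1_{\B_p^n}$ shows that the uniform distribution on $\B_p^n$ decomposes in exactly the same way: radius of density $nr^{n-1}$ on $[0,1]$ independent of an angular part distributed as $\sigma_p$. Hence $Y$ and a uniform point on $\B_p^n$ have the same law.

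The only genuinely delicate point is the polar integration formula with the cone measure; once it is in place, the rest is matching of marginals. The argument applies uniformly in $p\in[1,\infty)$ since positivity of $f_p$ and the homogeneity $\|rw\|_p=r\|w\|_p$ are the only properties used.
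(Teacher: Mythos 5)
The paper does not prove this proposition; it is stated as a known result attributed to Schechtman and Zinn, so there is no internal argument to compare your proposal against. Your proof is correct and is essentially the classical route: the cone-measure polar decomposition of Lebesgue measure, the independence of $\|Z\|_p$ and $Z/\|Z\|_p$ coming from the fact that the joint density $c_n\, e^{-\|z\|_p^p/p}$ of $Z$ depends on $z$ only through $\|z\|_p$, and the matching of both the radial density $n r^{n-1}$ on $[0,1]$ and the angular law $\sigma_p$ with the corresponding marginals of the uniform distribution on $\B_p^n$. The monotone-class extension of the polar integration formula from cylinder sets to all non-negative Borel $f$ is the one step you flag as delicate, and your reduction to cylinder sets (a $\pi$-system generating the Borel $\sigma$-field on $\R^n\setminus\{0\}$) together with the scaling identity $\vol_n(\{rw: r\in[0,R],\,w\in A\}) = R^n\,\vol_n(\B_p^n)\,\sigma_p(A)$ is the correct way to handle it.
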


%
%\[
%\Gamma_p(x) := \int_{A>0} \text{etr}(-A)\det(A)^{x-\frac{p+1}{2}} \,\dint A,
%\]
%where $\Re(x)>\frac{1}{2}(p-1)$, and the integral is over the space of $p\times p$ symmetric positive definite matrices.

% % % % % % % % % % % % % % % % % % % % % % % %
\subsection{L\'evy-Prokhorov distance}
% % % % % % % % % % % % % % % % % % % % % % % %

Let $(M,d)$ be some metric space and denote by $\mathscr B(M)$ its Borel $\sigma$-field. For $\varepsilon\in(0,\infty)$ and $A\subset M$, let
\[
A_{\varepsilon} := \big\{ x\in M\,:\, \exists a\in A: \, d(a,x)<\varepsilon \big\}
\]
be the $\varepsilon$-neighborhood of the set $A$. If $\mathcal M_1(M)$ denotes the collection of probability measures on $M$, then the L\'evy-Prokhorov distance $\rho_{\textrm{LP}}:\mathcal M_1(M) \times \mathcal M_1(M)\to [0,+\infty)$ is defined by
\[
\rho_{\textrm{LP}}(\mu,\nu) := \inf\Big\{ \varepsilon\in(0,\infty)\,:\, \forall A\in\mathcal B(M):\,\, \mu(A) \leq \nu(A_\varepsilon)+\varepsilon \text{ and }  \nu(A) \leq \mu(A_\varepsilon)+\varepsilon \Big\}.
\]
Note that $(\mathcal M_1(M),\rho_{\textrm{LP}})$ is a Polish space if and only if $(M,d)$ is a Polish space. Moreover, in that case convergence in L\'evy-Prokhorov distance is equivalent to weak convergence. We refer the reader to \cite{B1999} for more information.

% % % % % % % % % % % % % % % % % % % % % % % % % % % % % % % % % % % % % % %
\section{Proof of Theorem \ref{thm:ldp for stiefel manifold} -- the LDP on the Stiefel manifold}\label{sec:proof of stiefel ldp}
% % % % % % % % % % % % % % % % % % % % % % % % % % % % % % % % % % % % % % %

We shall now present the proof of Theorem \ref{thm:ldp for stiefel manifold}, which we split into several parts. The general idea is to reduce the problem to a setting of Gaussian random matrices that allows one to partition $V_{k,n}$ into two blocks, where one can then obtain an LDP for the first block in the space of $k\times \ell$ matrices. This LDP is then lifted to an LDP on the space $[-1,1]^{k\times \infty}$ via a projective limit argument.

% % % % % % % % % % % % % % % % % % % % % % % % % % % %
\subsection{Step 1 -- Reduction to Gaussian random matrices}
% % % % % % % % % % % % % % % % % % % % % % % % % % % %

For $n,k\in\N$ with $n\geq k$, let us denote by $G_{k,n} \in\R^{k\times n}$ a random Gaussian $k\times n$-matrix with i.i.d.\ standard normal entries. Then $G_{k,n}G_{k,n}^*$ is a symmetric $k\times k$ random matrix (having a Wishart distribution, see \eqref{eq:wishart and gauss}) and by Lemma \ref{lem:uniform distribution Stiefel manifold}, the rows of the $k\times n$ matrix
\[
V_{k,n} := (G_{k,n}G_{k,n}^*)^{-\frac{1}{2}}G_{k,n}
\]
form an orthonormal  $k$-frame uniformly distributed on the Stiefel manifold $\mathbb V_{k,n}$. %Let us write the rows of $V_{k,n}$ as
%\[
%V_{k,n} =
%\left(
%\begin{array}{ccc}
% U^{(n)}_{1}\\
% \vdots\\
% U^{(n)}_{k}
%\end{array}
%\right)
%\]
%with $U^{(n)}_i:=(U^{(n)}_{i,1},\dots,U^{(n)}_{i,n})\in\R^n$ denoting the $i$th vector of the $k$-frame,
%$i\in\{1,\dots,k\}$.
Let us fix some $\ell\in\N$ and assume that $n > \ell$; later we consider $n\to\infty$. We shall now decompose each of the random Gaussian matrices $G_{k,n}$ and $G_{k,n}^*$ into two blocks as follows:
\[
G_{k,n} =
\left(
B \,|\, C_n
\right)
\qquad\text{and}\qquad
G_{k,n}^* =
\left(\begin{array}{c}
 % \begin{matrix}
  B^*
 %\end{matrix}
\cr
\hline
 % \begin{matrix}
  C_n^*
 % \end{matrix}
\end{array}\right),
\]
where $B \in\R^{k \times \ell}, C_n\in \R^{k\times(n-\ell)}$ and  $B^*\in\R^{\ell\times k},C_n^*\in\R^{(n-\ell)\times k}$. Therefore, we can write %we can express $G_{k,n}G_{k,n}^*$ as follows,
\[
G_{k,n}G_{k,n}^* = BB^* + C_nC_n^* \in\R^{k\times k}.
\]
In particular, the $k\times n$ matrix $V_{k,n}$ takes the form
\begin{equation}\label{eq:V_k_n_decomposition}
V_{k,n} = \left(
\begin{array}{cc}
(BB^*+C_nC_n^*)^{-\frac{1}{2}} B \quad|\quad (BB^*+C_nC_n^*)^{-\frac{1}{2}} C_n
\end{array}
\right)
\end{equation}
with blocks $(BB^*+C_nC_n^*)^{-1/2} B \in \R^{k\times \ell}$ and $(BB^*+C_nC_n^*)^{-1/2} C_n\in \R^{k\times(n-\ell)}$.

% % % % % % % % % % % % % % % % % % % % % % % % % % % % % % % % % % % % % % %
\subsection{Step 2 -- Inverted matrix variate \texorpdfstring{$t$}{t}-distribution and LDP for \texorpdfstring{$k\times \ell$}{k*l}-submatrices}\label{subsec:inverted_t_and_LDP_for_submatrix}
% % % % % % % % % % % % % % % % % % % % % % % % % % % % % % % % % % % % % % %
Recall that $\ell \in \N$ is fixed. Let us denote the first $k\times \ell$-block of $V_{k,n}$ in~\eqref{eq:V_k_n_decomposition} by
$$
A_{\ell; n} := (BB^*+C_nC_n^*)^{-1/2} B\in \R^{k\times \ell},
\qquad
n > \ell.
$$
It follows directly from Proposition \ref{prop:relation inverse t and wishart} and \eqref{eq:wishart and gauss} (with the choice $N= n-\ell - k + 1$,  $m=\ell$, $S=C_nC_n^*$, $G_{k,m}=B$) that $A_{\ell;n}$ has inverted matrix variate $t$-distribution with $n-\ell-k+1$ degrees of freedom, i.e.,
\[
A_{\ell;n} = (BB^*+C_nC_n^*)^{-1/2} B \sim \mathrm{IT}_{k,\ell}(n-\ell-k+1,0,\id_{k\times k},\id_{\ell\times \ell}),
\]
provided $n\geq \ell+k$.
This means that by \eqref{eq:density inverted matrix variate t-distr} it has density $f_n:[-1,1]^{k\times \ell}\to[0,\infty)$ given by
\begin{equation}\label{eq:V_k_n_submatrix_density}
f_n(A) = \frac{\Gamma_k(\frac{n}{2})}{\pi^{\frac{k\ell}{2}}\Gamma_k(\frac{n-\ell}{2})}\det\Big(\id_{k\times k} - AA^*\Big)^{\frac{n-\ell-k-1}{2}} \mathbb 1_{\{\|AA^*\| <1\}},
\qquad
A\in [-1,1]^{k\times \ell}.
\end{equation}
This fact could have been deduced from the work of Khatri~\cite[Lemma~2]{Khatri1970a} which also describes the joint distribution of both blocks in~\eqref{eq:V_k_n_decomposition}. The same formula can be found in~\cite[Proposition 2.1]{diaconis_etal}, \cite[Proposition 7.3]{eaton} and \cite[Lemma 2.5]{jiang2}. We will now prove the following large deviation result for the block of $V_{k,n}$.

\begin{lemma}\label{lem:ldp_stiefel_kl}
As $n\to\infty$, the sequence
$(A_{\ell;n})_{n\geq\ell+k}$ satisfies an LDP on $[-1,1]^{k\times \ell}$ at speed $n$ with good rate function $\rate_{\ell}:[-1,1]^{k\times \ell} \to [0,+\infty]$ given by
\[
\rate_{\ell}(A):=
\begin{cases}
-\frac{1}{2} \log \det\big(\id_{k\times k} - AA^*\big) & : \|AA^*\|<1 \\
+\infty &: \text{otherwise}.
\end{cases}
\]
\end{lemma}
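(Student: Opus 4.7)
\textbf{Proof plan for Lemma \ref{lem:ldp_stiefel_kl}.}

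The plan is to apply Proposition~\ref{prop:basis topology} on the compact space $[-1,1]^{k\times\ell}$, using the base of open Euclidean balls $B(A_0,\eps)$ as $\eps\downarrow 0$. The explicit density in~\eqref{eq:V_k_n_submatrix_density} makes this a local Laplace-type estimate, so the argument splits naturally into (i) analysing the normalising constant, (ii) treating base points in the open region $\{\|A_0A_0^*\|<1\}$, and (iii) treating base points in the closed region $\{\|A_0A_0^*\|\geq 1\}$.

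\textbf{Step 1: the normalising constant is subexponential in $n$.} Using the product formula~\eqref{eq:gamma_function_generalized}, the prefactor in~\eqref{eq:V_k_n_submatrix_density} reduces to
\[
c_n := \frac{\Gamma_k(n/2)}{\pi^{k\ell/2}\Gamma_k((n-\ell)/2)}=\frac{1}{\pi^{k\ell/2}}\prod_{i=1}^{k}\frac{\Gamma\bigl(\tfrac{n-i+1}{2}\bigr)}{\Gamma\bigl(\tfrac{n-\ell-i+1}{2}\bigr)},
\]
and by Stirling, each ratio is $(n/2)^{\ell/2}(1+o(1))$, so $\tfrac{1}{n}\log c_n\to 0$. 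Thus $c_n$ contributes nothing on the exponential scale.

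\textbf{Step 2: interior base points.} Fix $A_0\in[-1,1]^{k\times\ell}$ with $\|A_0A_0^*\|<1$. The map $A\mapsto\log\det(\id_{k\times k}-AA^*)$ is continuous on a neighbourhood of $A_0$, so for any $\eta>0$ one can choose $\eps_0>0$ such that for all $A\in B(A_0,\eps_0)$,
\[
\bigl|\log\det(\id_{k\times k}-AA^*)-\log\det(\id_{k\times k}-A_0A_0^*)\bigr|<\eta,
\]
and in particular $\|AA^*\|<1$ on this ball. Sandwiching the integral $\int_{B(A_0,\eps)}f_n(A)\,\mathrm dA$ between $\vol(B(A_0,\eps))\cdot c_n\cdot \inf f_n$ and $\vol(B(A_0,\eps))\cdot c_n\cdot\sup f_n$ and using Step~1, one gets
\[
\lim_{\eps\to 0}\limsup_{n\to\infty}\tfrac{1}{n}\log\Pro[A_{\ell;n}\in B(A_0,\eps)]
=\lim_{\eps\to 0}\liminf_{n\to\infty}\tfrac{1}{n}\log\Pro[A_{\ell;n}\in B(A_0,\eps)]
=\tfrac{1}{2}\log\det(\id_{k\times k}-A_0A_0^*),
\]
matching the claimed value of $-\rate_\ell(A_0)$.

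\textbf{Step 3: boundary/exterior base points.} Assume $\|A_0A_0^*\|\geq 1$, so at least one eigenvalue $\lambda_{\max}(A_0A_0^*)\geq 1$. By continuity of eigenvalues (e.g.\ via Weyl's perturbation inequality), for any $\delta>0$ there exists $\eps>0$ such that every $A\in B(A_0,\eps)$ satisfies $\lambda_{\max}(AA^*)\geq 1-\delta$, and consequently
\[
\det(\id_{k\times k}-AA^*)\,\mathbbm 1_{\{\|AA^*\|<1\}}\leq \delta.
\]
Combined with $\tfrac{1}{n}\log c_n\to 0$ and $\vol(B(A_0,\eps))\leq C_{k,\ell}$, this gives
\[
\limsup_{n\to\infty}\tfrac{1}{n}\log\Pro[A_{\ell;n}\in B(A_0,\eps)]\leq \tfrac{1}{2}\log\delta,
\]
and sending $\delta\to 0$ shows that the local rate at $A_0$ is $+\infty$, matching $\rate_\ell(A_0)$.

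\textbf{Conclusion and obstacles.} Steps 2 and 3 verify the hypothesis of Proposition~\ref{prop:basis topology} at every point of the compact space $[-1,1]^{k\times\ell}$, yielding the weak LDP with rate $\rate_\ell$. Because the space is compact the weak LDP upgrades to a full LDP with a good rate function, and convexity of $\rate_\ell$ follows from concavity of $A\mapsto\log\det(\id_{k\times k}-AA^*)$ on the set $\{\|AA^*\|<1\}$ (a classical fact on the cone of positive definite matrices). The main technical point is really just Step~3: one must ensure that the integral $\int f_n$ over a small ball touching the boundary of $\{\|AA^*\|=1\}$ is dominated by the pointwise density bound rather than by a Jacobian blow-up; this is handled simply by bounding the volume of the ball by a constant and using that $\det(\id-AA^*)\to 0$ uniformly there. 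Everything else is routine Stirling plus continuity.
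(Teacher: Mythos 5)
Your proposal is correct and follows essentially the same route as the paper: apply Proposition~\ref{prop:basis topology} to the explicit inverted matrix variate $t$-density~\eqref{eq:V_k_n_submatrix_density}, show the normalizing constant is subexponential in $n$, and sandwich the integral over a small ball by the sup/inf of the density. The paper organizes the argument as a global upper bound (handling boundary points via the continuous extension of $-\rate_\ell$ to the compactified range $[-\infty,0]$ on $\bar{\mathbb B}$) and a separate lower bound, while you split by interior versus boundary base points and handle the boundary via an eigenvalue-continuity bound $\det(\id_{k\times k}-AA^*)\leq 1-\lambda_{\max}(AA^*)\leq\delta$; these are equivalent formulations of the same estimate (and note your sandwich in Step 2 inadvertently writes an extra factor $c_n$ that is already contained in $f_n$, but this is a typo, not a gap).
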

\begin{proof}
The strategy is to employ Proposition~\ref{prop:basis topology} to obtain a weak LDP, which implies the full LDP since the space $[-1,1]^{k\times \ell}$ is compact. Thus, we need to show that for every matrix $A\in [-1,1]^{k\times \ell}$,
\begin{align}
&\lim_{r\to 0 }\limsup_{n\to\infty} \frac{1}{n} \log \Pro[A_{\ell;n} \in B_r(A)] \leq -\rate_{\ell}(A),\label{eq:proof_kl_upper_bound}\\
&\lim_{r\to 0 }\liminf_{n\to\infty} \frac{1}{n} \log \Pro[A_{\ell;n} \in B_r(A)] \geq -\rate_{\ell}(A),\label{eq:proof_kl_lower_bound}
\end{align}
where $B_r(A)$ denotes the open (Euclidean) ball in $\R^{k\times \ell}$ of radius $r>0$ around $A$.

\medskip
\noindent
\textit{Proof of the upper bound~\eqref{eq:proof_kl_upper_bound}.}
Let $\mathbb B:= \{M\in [-1,1]^{k\times \ell}\,:\,\| MM^* \|<1\}$ and write $\bar{\mathbb B}:= \{M\in [-1,1]^{k\times \ell}\,:\,\| MM^* \|\leq 1\}$ for the closure of $\mathbb B$. Note that $A_{\ell; n} \in \mathbb B$ a.s.
%Although it is not essential  for the following, one can check that $\bar{\mathbb B}\subset [-1,1]^{k\times \ell}$.
If $A\notin \bar{\mathbb B}$, then the probability in~\eqref{eq:proof_kl_upper_bound} vanishes for sufficiently small $r>0$, while at the same time $-\rate_{\ell}(A)= -\infty$, so that~\eqref{eq:proof_kl_upper_bound} holds.  Next, we  take some matrix $A\in \bar{\mathbb B}$.
%Then,
%\[
%\Pro[A_{\ell;n} \in B_r(A)] = \int_{B_r(A)} f_n(D) \,\dint D.
%\]
Then, using that $A_{\ell;n}$ has inverted matrix variate $t$-distribution,
\begin{align*}
\frac{1}{n} \log \Pro[A_{\ell;n} \in B_r(A)]
& =
\frac{1}{n} \log \int_{B_r(A)} f_n(D) \,\dint D \cr
& = \frac{1}{n} \log \int_{B_r(A)\cap \mathbb B} e^{n \frac{1}{n}\log f_n(D)} \,\dint D \cr
& \leq \frac{1}{n} \log \int_{B_r(A)\cap \mathbb B} e^{n \sup_{C\in B_r(A)\cap \mathbb B}\frac{1}{n}\log f_n(C)} \,\dint D \cr
& = \frac{1}{n}\log \vol_{k\times \ell}\big(B_r(A)\cap \mathbb B\big) + \sup_{C\in B_r(A)\cap \mathbb B}\frac{1}{n}\log f_n(C).
\end{align*}
%Since $A\in\B$, we have $\overline{B_r(A)} \subset \B$ if $r>0$ is sufficiently small. In particular, $\det(\id_{k\times k}-CC^*)>0$ on $\overline{B_r(A)}$.
As $n\to\infty$, the first summand on the right-hand side converges to $0$. Let us look at the second one. For any $C\in B_r(A)\cap \mathbb B$ we have
\begin{align*}
\frac{1}{n} \log f_n(C) & = -\frac{k\ell}{2n}\log(\pi) + \frac{1}{n}\log\Bigg(\frac{\Gamma_k(\frac{n}{2})}{\Gamma_k(\frac{n-\ell}{2})}\Bigg) + \frac{n-\ell-k-1}{2n} \log \det\big(\id_{k\times k} - CC^*\big).
\end{align*}
Observe that
\begin{equation}\label{eq:asymptotic multivariate gamma quotient}
\frac 1n \log \Bigg(\frac{\Gamma_k(\frac{n}{2})}{\Gamma_k(\frac{n-\ell}{2})} \Bigg)
=
\frac 1n \log \Bigg(\prod_{j=1}^k \frac{\Gamma(\frac{n}{2}+\frac{1-j}{2})}{\Gamma(\frac{n}{2}+\frac{1-j}{2} - \frac{\ell}{2})}\Bigg)
=
\frac 1n \log \left( (n/2)^{\frac{k\ell}{2}}(1+o(1))\right)
\to 0.
\end{equation}
It follows that
%on $\overline{B_r(A)}$ the uniform convergence
%$$
%\lim_{n\to\infty} \frac{1}{n} \log f_n(C) = -\rate_{\ell}(C).
%$$
%It follows that
\begin{equation}\label{eq:uniform convergence}
\lim_{n\to\infty} \sup_{C\in B_r(A)\cap \mathbb B} \frac{1}{n} \log f_n(C) =  \sup_{C\in B_r(A)\cap \mathbb B} \frac{1}{2}\log \det\big(\id_{k\times k} - CC^*\big) = \sup_{C\in B_r(A)\cap \bar{\mathbb B}} -\rate_{\ell}(C).
\end{equation}
%In particular, because of the lower-semicontinuity of $\rate$ (and thus upper-semicontinuity of $-\rate$), we have that
The function $C\mapsto \det (\id_{k\times k}-CC^*)$ is continuous on $[-1,1]^{k\times \ell}$. Hence, $C\mapsto -\rate_{\ell}(C)$ is a continuous function from $\bar{\mathbb B}$ to the compactified space $[-\infty,0]$. It follows that
\begin{equation}\label{eq: upper semicont of -J}
\lim_{r\to 0}\sup_{C\in B_r(A)\cap \bar{\mathbb B}} -\rate_{\ell}(C)
%= -\lim_{r\to 0}\inf_{C\in B_r(A)\cap \bar{\mathbb B}} \rate_{\ell}(C)
= -\rate_{\ell}(A).
%\geq \limsup_{C\to A}  -\rate_{\ell}(C) = .
\end{equation}
Note that the case when $A$ is on the boundary of $\bar{\mathbb B}$ is included.
Putting the pieces together yields~\eqref{eq:proof_kl_upper_bound}.
%Now, since
%we obtain from \eqref{eq:uniform convergence} that
%\[
%\limsup_{n\to\infty} \frac{1}{n} \log \Pro[A_{\ell;n} \in B_r(A)] \leq \limsup_{n\to\infty} \sup_{C\in B_r(A)}\frac{1}{n} %\log f_n(C) =  \sup_{C\in B_r(A)} -\rate_{\ell}(C).
%\]
%Combining this with \eqref{eq: upper semicont of -J}, we get

\medskip
\noindent
\textit{Proof of the lower bound~\eqref{eq:proof_kl_lower_bound}.} If $A\notin \mathbb B$, then $-\rate_{\ell}(A)=-\infty$ and~\eqref{eq:proof_kl_lower_bound} is trivially satisfied. Hence, we can assume that $A\in  \mathbb B$. For sufficiently small $r>0$, the ball $B_r(A)$ is  contained in $\mathbb B$.  Estimating the density $f_n$ by its infimum over $B_r(A)$, we obtain
$$
\frac{1}{n} \log \Pro[A_{\ell;n} \in B_r(A)]
\geq
\frac{1}{n}\log \vol_{k\times \ell}\big(B_r(A)\big) + \inf_{C\in B_r(A)\cap \mathbb B}\frac{1}{n}\log f_n(C).
$$
Then one can argue as in the proof of the upper bound.
\end{proof}

\subsection{Step 3 -- The LDP for random Stiefel matrix} \label{subsec:dawson gaertner approach}
% % % % % % % % % % % % % % % % % % % % % % % % % % % % % % % % %

We shall now combine the projective limit approach and the Dawson--G\"artner theorem (see Proposition \ref{prop:dawson-gaertner}) with the LDP for the first block of $V_{k,n}$ obtained in Lemma~\ref{lem:ldp_stiefel_kl}. In order to do this, we consider the space $[-1,1]^{k\times
\infty}$ of $k\times \infty$-matrices with entries from $[-1,1]$  (endowed with topology of coordinatewise convergence and Borel $\sigma$-field) as a projective limit of the spaces  $\mathcal Y_\ell:= [-1,1]^{k\times \ell}$, $\ell\in \N$. For $\ell\leq m$, the mapping $p_{\ell m}:\mathcal Y_m \to \mathcal Y_\ell$ is defined as
$$
p_{\ell m }: [-1,1]^{k\times m} \to [-1,1]^{k\times \ell},
\qquad
(a(i,j))_{i,j = 1}^{k,m} \mapsto (a(i,j))_{i,j = 1}^{k,\ell}
$$
i.e., $p_{\ell m}$ just removes the columns $\ell+1$ up to $m$ of a $k\times m$-matrix. % $(a_{ij})_{i,j = 1}^{k,m}$.
%\[
%p_{\ell m }(A) = p_{\ell m } \Big(\sum_{i,j=1}^{k,m} A_{ij}E^{(i,j)} + \sum_{i=1}^{k}\sum_{j=m+1}^\infty 0\cdot E^{(i,j)}\Big)= %\sum_{i,j=1}^{k,\ell} A_{ij}E^{(i,j)} + \sum_{i=1}^{k}\sum_{j=\ell+1}^\infty 0\cdot E^{(i,j)} \in\mathcal Y_\ell,
%\]
%matrices as a subspace of $\R^{k\times \infty}$ (endowed with the topology of coordinatewise convergence); we just fill all columns with column index greater than or equal to $\ell+1$ with zeros. More precisely, define
%\[
%\mathcal Y_\ell := \Big\{ A=(A_{ij})_{i,j=1}^{k,\infty} \in\R^{k\times \infty}\,:\, A_{ij}=0\,\,\,\forall i\in\{1,\dots,k\} \, %\forall j\geq \ell+1  \Big\}.
%\]
%For $s\in\{1,\dots,k\}$ and $t\in\N$, let us write $E^{(s,t)}$ for the matrix in $\R^{k\times \infty}$ with $E^{(s,t)}_{st} = 1$ and $E^{(s,t)}_{ij}=0$ whenever $(s,t)\neq (i,j)$. Then each matrix $A\in\R^{k\times \infty}$ has a representation of the form
%\[
%A = \sum_{i,j=1}^{k,\infty} A_{ij}E^{(i,j)}.
%\]
%For $\ell<m$, the mapping $p_{\ell m}:\mathcal Y_m \to \mathcal Y_\ell$ is defined as
%\[
%p_{\ell m }(A) = p_{\ell m } \Big(\sum_{i,j=1}^{k,m} A_{ij}E^{(i,j)} + \sum_{i=1}^{k}\sum_{j=m+1}^\infty 0\cdot E^{(i,j)}\Big)= %\sum_{i,j=1}^{k,\ell} A_{ij}E^{(i,j)} + \sum_{i=1}^{k}\sum_{j=\ell+1}^\infty 0\cdot E^{(i,j)} \in\mathcal Y_\ell,
%\]
%i.e., $p_{\ell m}$ replaces the columns $\ell+1$ up to $m$ of $A$ by zero columns.
In that setting, $[-1,1]^{k\times \infty}$ is the projective limit of the projective system $(\mathcal Y_\ell,p_{i,\ell})_{i\leq \ell}$. The projection  $p_{\ell}: [-1,1]^{k\times \infty} \to [-1,1]^{k\times \ell}$ maps a  matrix $A\in [-1,1]^{k\times \infty}$ to the matrix $p_\ell(A) =: A_\ell\in [-1,1]^{k\times \ell}$ consisting of the first $\ell$ columns of $A$.

On the compact space $[-1,1]^{k\times \infty}$  we consider, for each $n\in\N$, a probability measure $\mu_n$ which is the distribution of the matrix $V_{k,n}$ which is extended to a $k\times \infty$-matrix by filling the columns $n+1,n+2,\ldots$ with zeroes. For every fixed $\ell\in\N$, Lemma~\ref{lem:ldp_stiefel_kl} shows that $(\mu_n\circ p_{\ell}^{-1})_{n\geq \ell}$ satisfies a large deviation principle on $[-1,1]^{k\times \ell}$ at speed $n$ and with the good rate function $\rate_{\ell}:[-1,1]^{k\times \ell} \to[0,+\infty]$ given by
$$
\rate_\ell (A_\ell)
=
\begin{cases}
-\frac{1}{2} \log \det\big(\id_{k\times k} - A_\ell A_\ell^*\big) & : \|A_\ell A_\ell^*\|<1 \\
+\infty &: \text{otherwise}.
\end{cases}
$$
By the Dawson-Gärtner theorem (Proposition~\ref{prop:dawson-gaertner}), it remains to check that for every matrix $A\in [-1,1]^{k\times \infty}$, we have
\begin{equation}\label{eq:rate_stiefel_supremum}
\sup_{\ell\in \N} \rate_\ell \big(p_{\ell} (A)\big)
 =
\rate(A)
=
  \begin{cases}
-\frac{1}{2} \log \det\big(\id_{k\times k} - A A^*\big) & : A\in \mathcal R_2^{k\times \infty} \text{ and } \|A A^*\|<1 \\
+\infty &: \text{otherwise}.
\end{cases}
\end{equation}
Recall that $A\in \mathcal R_2^{k\times \infty}$ means that all rows of $A$ are in $\ell_2$. If some row of $A$ is not square summable, then for sufficiently large $\ell$, the corresponding row of $A_\ell (= p_\ell(A))$ has Euclidean norm $>1$, implying that the condition $\|A_\ell A_\ell^*\|<1$ fails and hence $\rate_\ell(A_\ell) = +\infty$. From now on, let all rows of $A$ be square summable. Then, we have that $A_\ell A_\ell^* = p_\ell (A) p_\ell(A)^* \to AA^*$ entry-wise. If $\|AA^*\|>1$, then $\|A_\ell A_\ell^*\|>1$, where $\ell$ is sufficiently large, and we again have $\rate_\ell(A_\ell) = \infty$. If $\|AA^*\|=1$, then $\|A_\ell A^*_\ell\|\to 1$ and hence $\rate_\ell(A_\ell)\to\infty$ as $\ell\to\infty$. Finally, if $\|AA^*\|<1$, then
$$
\rate_\ell(A_\ell) = -\frac{1}{2} \log \det\big(\id_{k\times k} - A_\ell A_\ell^*\big) \stackrel{\ell\to\infty}{\longrightarrow}  -\frac{1}{2} \log \det\big(\id_{k\times k} - A A^*\big) =\rate(A).
$$
Given the last display, it is only left to prove that $\rate_\ell(A_\ell)$ is non-decreasing in $\ell$. This is done in the following lemma using the Courant--Fischer min-max principle.
\begin{lemma}\label{lem:monotone_determinant_in_ell}
Let $A\in\mathcal R_2^{k\times \infty}$ such that $\|AA^*\|\leq 1$. Then, the function
$\ell \mapsto \det(\id_{k\times k}- A_{\ell}A_{\ell}^*)$
is non-increasing in $\ell\in \N$.
\end{lemma}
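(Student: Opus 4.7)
The key observation is that $A_\ell A_\ell^* = \sum_{j=1}^\ell C_j(A) C_j(A)^*$, where $C_j(A) \in \R^k$ denotes the $j$-th column of $A$. Consequently,
\[
A_{\ell+1} A_{\ell+1}^* - A_\ell A_\ell^* = C_{\ell+1}(A) C_{\ell+1}(A)^*
\]
is a rank-one positive semi-definite matrix, so the map $\ell \mapsto A_\ell A_\ell^*$ is non-decreasing in the Loewner (positive semi-definite) order. Moreover, for every $\ell \in \N$, a further summation of PSD rank-one terms yields $A_\ell A_\ell^* \preceq A A^*$, where the right-hand side is a well-defined $k\times k$ matrix because $A \in \mathcal R_2^{k\times\infty}$.

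First, I would invoke the Courant--Fischer min-max principle: for symmetric matrices $M \preceq N$ in $\R^{k\times k}$ one has $\lambda_i(M) \leq \lambda_i(N)$ for each $i=1,\dots,k$, where the eigenvalues are listed in non-increasing order. Applying this to $A_\ell A_\ell^* \preceq A_{\ell+1} A_{\ell+1}^* \preceq A A^*$ gives
\[
0 \leq \lambda_i(A_\ell A_\ell^*) \leq \lambda_i(A_{\ell+1} A_{\ell+1}^*) \leq \lambda_i(A A^*) \leq \|A A^*\| \leq 1,
\qquad i=1,\dots,k.
\]
Thus each eigenvalue $\lambda_i(A_\ell A_\ell^*)$ lies in $[0,1]$ and is non-decreasing in $\ell$.

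Finally, I would express the determinant in eigenvalue form,
\[
\det\bigl(\id_{k\times k} - A_\ell A_\ell^*\bigr) = \prod_{i=1}^k \bigl(1 - \lambda_i(A_\ell A_\ell^*)\bigr),
\]
and observe that each factor lies in $[0,1]$ and is non-increasing in $\ell$. Therefore the product is non-increasing in $\ell$, which is the claim. There is no real obstacle here; the only thing to be careful about is the case $\|A A^*\| = 1$, where some factors may equal zero and one has to check that this is consistent with the non-negativity of the entire product (which is automatic from the above eigenvalue bounds).
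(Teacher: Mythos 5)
Your proof is correct and takes essentially the same approach as the paper: both identify $A_{\ell+1}A_{\ell+1}^* - A_\ell A_\ell^* = C_{\ell+1}C_{\ell+1}^*$ as a rank-one positive semi-definite increment and invoke Courant--Fischer (Weyl monotonicity of eigenvalues) to conclude the determinant is non-increasing. The only cosmetic difference is that you track the eigenvalues of $A_\ell A_\ell^*$ directly, whereas the paper phrases it as the determinant inequality $\det(M_1+M_2)\geq\det(M_1)$ applied to $M_1 = \id_{k\times k}-A_{\ell+1}A_{\ell+1}^*$ and $M_2 = A_{\ell+1}A_{\ell+1}^*-A_\ell A_\ell^*$.
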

\begin{proof}
 First, we observe that, for any $\ell\in\N$, a direct computation yields that
\[
A_{\ell+1}A_{\ell+1}^* = A_\ell A_{\ell}^* + C_{\ell+1}C_{\ell+1}^*,
%\big(A_{\ell+1}(i,\ell+1)\big)_{i=1}^k \otimes \big(A_{\ell+1}(i,\ell+1)\big)_{i=1}^k,
\]
where $C_{\ell+1}\in\R^k$ denotes the $(\ell+1)$st column of $A$.
%That is, the matrix $A_{\ell+1}A_{\ell+1}^*$ is just the matrix $A_\ell A_{\ell}^*$ plus the $k\times k$ matrix $C_{\ell+1}C_{\ell+1}^*$ formed by taking the tensor product of the $(\ell+1)$st column of $A_{\ell+1}$ with itself.
Hence, for any $z\in\R^{k}$,
\[
\Big\langle z, \big(A_{\ell+1}A_{\ell+1}^* - A_{\ell}A_{\ell}^*\big)z\Big\rangle
=
% \Big\langle z, \Big(\big(A_{\ell+1}(i,\ell+1)\big)_{i=1}^k \otimes \big(A_{\ell+1}(i,\ell+1)\big)_{i=1}^k\Big)z\Big\rangle.
 \Big\langle z, C_{\ell+1}C_{\ell+1}^* z\Big\rangle
=
\Big\langle C_{\ell+1}^* z, C_{\ell+1}^* z\Big\rangle\geq 0.
\]
%since $C_{\ell+1}C_{\ell+1}^*$ is positive semi-definite, which means that $\langle z,C_{\ell+1}C_{\ell+1}^* z \rangle \geq 0$, implying
%\[
%\Big\langle z, \big(A_{\ell+1}A_{\ell+1}^* - A_{\ell}A_{\ell}^*\big)z\Big\rangle \geq 0
%\]
It follows that $A_{\ell+1}A_{\ell+1}^* - A_{\ell}A_{\ell}^*$  is  positive semi-definite. Therefore, using that by the Courant-Fischer min-max principle (see, e.g., \cite{D2013}),
%the variational characterization of eigenvalues (see min-max theorem \cite{}),
we have for any two symmetric and positive semi-definite matrices $M_1$ and $M_2$ that $\det(M_1+M_2)\geq \det(M_1)$ (because the eigenvalues, arranged decreasingly,  satisfy $\lambda_i(M_1+M_2) \geq \lambda_i(M_1)$), we obtain
\[
\det\big(\id_{k\times k}- A_{\ell}A_{\ell}^*\big) = \det\big(\id_{k\times k}-A_{\ell+1}A_{\ell+1}^*+A_{\ell+1}A_{\ell+1}^*- A_{\ell}A_{\ell}^*\big) \geq
\det\big(\id_{k\times k}- A_{\ell+1}A_{\ell+1}^*\big),
\]
which completes the proof.
%Thus, the determinant decreases as $\ell$ increases, which implies that $\rate_\ell(A_\ell)$ is increasing in $\ell$.
\end{proof}

%\subsection{Step 5 -- Convexity of the rate function in Theorem \ref{thm:ldp for stiefel manifold}}

\subsection{Step 4 -- Convexity of the rate function}\label{subsec:convexity_rate_function}
To complete the proof of Theorem~\ref{thm:ldp for stiefel manifold} it remains to establish the convexity of the rate function $\rate:[-1,1]^{k\times\infty} \to [0,+\infty]$ given by~\eqref{eq:rate_stiefel_supremum}.
%\[
%\rate(A) :=
%\begin{cases}
%-\frac{1}{2} \log \det\big(\id_{k\times k} - AA^*\big) &:\, A\in \mathcal R_2^{k\times\infty} \text{ and } \|AA^*\| <1,\\
%+\infty &:\, \text{otherwise},
%\end{cases}
%\]
%where $\mathcal R_2^{k\times \infty}$ is the set of all $k\times \infty$ matrices whose rows belong to the space $\ell_2$ of square summable sequences.
%In order to do this we use the matrix logarithm in form of the identity
%\[
%\log \det(CC^*) = \text{tr} \log (CC^*),
%\]
%for $C\in \mathcal R_2^{k\times\infty} \text{ and } \|CC^*\| <1$.
To this end, we need to prove that for any two matrices $A,B\in \mathcal R_2^{k\times\infty}$ (meaning that the rows of $A$ and $B$ are square summable) with $\|AA^*\|<1$ and $\|BB^*\| <1$ the following inequality holds:
$$
\rate\Big(\frac{A+B}{2}\Big) \leq \frac {\rate(A) + \rate(B)}{2}.
$$
Denote by $\lambda_1,\dots,\lambda_k\in[0,\infty)$ the eigenvalues of the  symmetric and positive semi-definite $k\times k$-matrix  $\Big(\frac{A+B}{2}\Big)\Big(\frac{A+B}{2}\Big)^*$ and by $s_1=\sqrt{\lambda_1},\dots, s_k= \sqrt{\lambda_k}$ the singular values of $\frac{A+B}{2}$. Then we obtain
\begin{align}\label{eq:proof_I_convex_I_AB2}
\rate\Big(\frac{A+B}{2}\Big) & = -\frac{1}{2} \log \det\Bigg(\id_{k\times k} - \Big(\frac{A+B}{2}\Big)\Big(\frac{A+B}{2}\Big)^*\Bigg)
 = -\frac{1}{2} \log \prod_{j=1}^k (1 - \lambda_j)
% = -\frac{1}{2} \log \prod_{j=1}^k (1 - s_j^2)
  = -\frac{1}{2} \sum_{j=1}^k\log(1-s_j^2).
\end{align}
The singular values of $A$ are denoted by $s_j(A)$, $j=1,\ldots, k$. The Schatten $p$-norm of $A$ is defined as $\|A\|_{\mathscr S_{p}} = (\sum_{j=1}^k s_j^p(A))^{1/p}$, $p\geq 1$ (see, e.g., \cite{K1986}). Similar notation is used for the matrix $B$.
Using the Taylor expansion
\[
\log(1-x) = -\sum_{i=1}^\infty \frac{x^i}{i},\qquad |x|<1,
\]
we can rewrite~\eqref{eq:proof_I_convex_I_AB2} in terms of Schatten norms, namely,
\begin{align*}
\rate\Big(\frac{A+B}{2}\Big)
  = -\frac{1}{2} \sum_{j=1}^k\log(1-s_j^2)
   = \frac{1}{2} \sum_{j=1}^k \sum_{i=1}^\infty \frac{s_j^{2i}}{i}
%  & = \frac{1}{2} \sum_{i=1}^\infty \frac{1}{i} \sum_{j=1}^k s_j^{2i}
    = \frac{1}{2} \sum_{i=1}^\infty \frac{1}{i} \Big\|\frac{A+B}{2} \Big\|_{\mathscr S_{2i}}^{2i}
    \leq \frac{1}{2} \sum_{i=1}^\infty \frac{1}{i} \Big(\frac{\|A\|_{\mathscr S_{2i}} +\|B\|_{\mathscr S_{2i}}}{2}\Big)^{2i}.
\end{align*}
Using convexity of $x\mapsto x^{2i}$, we see that
\begin{align*}
\rate\Big(\frac{A+B}{2}\Big)
  & \leq \frac{1}{2} \sum_{i=1}^\infty \frac{1}{i} \Big(\frac{1}{2}\|A\|_{\mathscr S_{2i}}^{2i} + \frac{1}{2}\|B\|_{\mathscr S_{2i}}^{2i}\Big)
   = \frac{1}{4} \sum_{i=1}^\infty \frac{1}{i} \Big(\|A\|_{\mathscr S_{2i}}^{2i} + \|B\|_{\mathscr S_{2i}}^{2i}\Big) \cr
  & = \frac{1}{4} \sum_{i=1}^\infty \frac{1}{i} \sum_{j=1}^k \Big(s_j(A)^{2i} + s_j(B)^{2i}\Big)
%  & = \frac{1}{4} \sum_{i=1}^\infty \frac{1}{i} \sum_{j=1}^k s_j(A)^{2i} + \frac{1}{4} \sum_{i=1}^\infty \frac{1}{i} \sum_{j=1}^k  s_j(B)^{2i} \cr
%  & = \frac{1}{4}  \sum_{j=1}^k \sum_{i=1}^\infty \frac{1}{i} s_j(A)^{2i} + \frac{1}{4}  \sum_{j=1}^k \sum_{i=1}^\infty \frac{1}{i}  s_j(B)^{2i} \cr
   = -\frac{1}{4} \sum_{j=1}^k \log (1-s_j(A)^2) - \frac{1}{4} \sum_{j=1}^k \log(1-s_j(B)^2).
\end{align*}
Using the same argument as in~\eqref{eq:proof_I_convex_I_AB2}, the latter can be rewritten as $\frac 12 \rate(A) + \frac 12 \rate(B)$, thus establishing the convexity of $\rate$. Alternatively, it would be also possible to deduce the convexity of $\rate$ from~\eqref{eq:proof_I_convex_I_AB2} and a classical inequality of Ky Fan (see, e.g., \cite[Theorem 4.1]{GK1969}).

% $\rate:[-1,1]^{k\times\infty} \to [0,\infty]$.

%\begin{rmk}
%For the sake of completeness, we include a direct proof of the convexity in the appendix.?????
%\end{rmk}
% % % % % % % % % % % % % % % % % % % % % % % % % % % % % % % % % % % % % % %
\section{Proof of Theorem \ref{thm:ldp orthogonal group} -- the LDP on the orthogonal group} \label{sec: Proof of Theorem B}
% % % % % % % % % % % % % % % % % % % % % % % % % % % % % % % % % % % % % % %

We shall now present the proof of the large deviation principle for random matrices chosen uniformly at random from the orthogonal group with respect to the Haar probability measure. Several elements from the proof of Theorem \ref{thm:ldp for stiefel manifold} in Section \ref{sec:proof of stiefel ldp} will enter. Moreover, due to compactness of the underlying space $[-1,1]^{\N\times \N}$ it will be enough to establish a weak large deviation principle. The latter will be achieved through Proposition \ref{prop:basis topology} on a base of the topology.

% % % % % % % % % % % % % % % % % % % % % % % % % % % % %
\subsection{Step 1 -- Reduction to row-submatrices}
% % % % % % % % % % % % % % % % % % % % % % % % % % % % %

Let $n\in\N$ and assume that $O^{(n)}$ is chosen uniformly at random from the orthogonal group $\mathcal O(n)$ with respect to the Haar probability measure. Note that if $k\in\N$ with $k\leq n$, then the first $k$ rows of $O^{(n)}$ are uniformly distributed on the Stiefel manifold $\mathbb V_{k,n}$; this will allow us to use some results obtained in Section \ref{sec:proof of stiefel ldp}. Let $M\in[-1,1]^{\N\times \N}$ be an infinite matrix with entries bounded in modulus by $1$. For $k\in\N$, $\ell\in\N$, denote by $M_{k,\ell}$ the $k\times\ell$ upper left corner submatrix of $M$ defined by setting
\[
M_{k,\ell}(i,j)
:=
%\begin{cases}
M(i,j), \qquad \text{ for } 1\leq i \leq k,\, 1\leq j \leq \ell.
%0 &: \text{otherwise}.
%\end{cases}
\]
Similarly, for $k\in \N$ and $\ell = \infty$, we define $M_{k,\infty}$ to be the $k\times \infty$-matrix consisting of the first $k$ rows of $M$.
Note that a base of the product topology on $[-1,1]^{\N\times \N}$ is given by the collection of balls of the form
\[
B_{k,\ell}(M,r)
:= \Big\{ \widetilde{M}\in[-1,1]^{\N\times \N}\,:\, \big\|\widetilde{M}_{k,\ell} - M_{k,\ell}\big\|_2 \leq r \Big\},\qquad M\in [-1,1]^{\N\times \N}, r\in(0,\infty), k,\ell\in\N,
\]
where $\|\cdot\|_2$ is the Euclidean norm on $\R^{k\times \ell}$.
Lemma~\ref{lem:ldp_stiefel_kl} shows that for every fixed $k,\ell\in\N$, the sequence $O^{(n)}_{k,\ell}$,  with $n\geq \max\{k,\ell\}$, satisfies a large deviation principle on $[-1,1]^{k\times \ell}$ at speed $n$ and with good rate function
$$
\rate_{k,\ell} (M_{k,\ell})
=
\begin{cases}
-\frac{1}{2} \log \det\big(\id_{k\times k} - M_{k,\ell} M_{k,\ell}^*\big) & : \|M_{k,\ell} M_{k,\ell}^*\|<1 \\
+\infty &: \text{otherwise}.
\end{cases}
$$
Moreover, the monotonicity of $\rate_{k,\ell}$ in the parameter $\ell$, which we established in Lemma~\ref{lem:monotone_determinant_in_ell}, together with the Dawson-G\"artner argument (see Proposition \ref{prop:dawson-gaertner}), then guarantee (similar to the argument presented in Subsection \ref{subsec:dawson gaertner approach}) that for every fixed $k\in \N$, the sequence $O^{(n)}_{k,\infty}$, $n\geq k$, satisfies an LDP at speed $n$ with a good rate function
$$
\rate_{k,\infty}(M_{k,\infty})
:=
\sup_{\ell\in \N} \rate_{k,\ell} \big(M_{k,\ell}\big)
=
  \begin{cases}
-\frac{1}{2} \log \det\big(\id_{k\times k} - M_{k,\infty} M_{k,\infty}^*\big) & : M_{k,\infty}\in \mathcal R_2^{k\times \infty},  \|M_{k,\infty} M_{k,\infty}^*\|<1 \\
+\infty &: \text{otherwise},
\end{cases}
$$
where $\mathcal R_2^{k\times \infty}$ denotes the set of infinite $k\times\infty$-matrices for which each of the $k$ rows is in $\ell_2$.

% % % % % % % % % % % % % % % % % % % % % % % % % %
\subsection{Step 2 -- Properties of submatrices}
% % % % % % % % % % % % % % % % % % % % % % % % % %

While monotonicity for $\rate_{k,\ell}$ in the parameter $\ell$ has been established in Lemma~\ref{lem:monotone_determinant_in_ell}, we shall now show the monotonicity of $\rate_{k,\infty}$ in the parameter $k$.
The argument is based on Cauchy's interlacing theorem for eigenvalues.

\begin{lemma}\label{lem:log-determinant decreasing in k}
Consider a matrix $M\in \R^{\N\times \N}$ with square summable rows and $\|M_{k,\infty}M_{k,\infty}^*\|\leq 1$ for all $k\in \N$.  Then the mapping $k\mapsto \det(\id_{k\times k} - M_{k,\infty}M_{k,\infty}^*)$
is well-defined and non-increasing in $k\in \N$.
\end{lemma}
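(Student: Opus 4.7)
The plan is to compare $N_{k+1} := \id_{(k+1)\times (k+1)} - M_{k+1,\infty}M_{k+1,\infty}^*$ with its top-left $k\times k$ principal submatrix, which by construction is exactly $N_k := \id_{k\times k} - M_{k,\infty}M_{k,\infty}^*$. This reduces the asserted monotonicity to a statement about the determinants of two nested principal submatrices of a symmetric positive semi-definite matrix, which is tailor-made for Cauchy's interlacing theorem.

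First I would verify well-definedness: because each of the first $k$ rows of $M$ lies in $\ell_2$, the $(i,j)$-entry of $M_{k,\infty}M_{k,\infty}^*$ equals the (convergent) inner product $\langle R_i(M),R_j(M)\rangle$, so $M_{k,\infty}M_{k,\infty}^*$ is an honest symmetric $k\times k$ matrix. Next, both $N_k$ and $N_{k+1}$ are symmetric; the assumption $\|M_{k,\infty}M_{k,\infty}^*\|\le 1$ (for every $k$) guarantees that their spectra are contained in $[0,1]$, so in particular $\det(N_k)\in[0,1]$ and the quantity we want to monotonize is well-defined. The crucial observation is that adding the $(k+1)$-st row to $M_{k,\infty}$ only appends one row and one column to the Gram matrix $M_{k,\infty}M_{k,\infty}^*$, so the top-left $k\times k$ block of $N_{k+1}$ coincides with $N_k$.

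Now I would apply Cauchy's interlacing theorem. Denoting the eigenvalues of $N_{k+1}$ by $\lambda_1\ge \cdots \ge \lambda_{k+1}$ and those of $N_k$ by $\mu_1\ge \cdots \ge \mu_k$, the theorem gives
\[
\lambda_1 \ge \mu_1 \ge \lambda_2 \ge \mu_2 \ge \cdots \ge \lambda_k \ge \mu_k \ge \lambda_{k+1}.
\]
In particular $\lambda_i \le \mu_{i-1}$ for $i=2,\dots,k+1$, while $\lambda_1 \le 1$ since $N_{k+1}$'s spectrum lies in $[0,1]$. All $\mu_i$ are nonnegative, hence
\[
\det(N_{k+1}) \;=\; \prod_{i=1}^{k+1}\lambda_i \;\le\; \lambda_1\prod_{i=1}^{k}\mu_i \;\le\; \prod_{i=1}^{k}\mu_i \;=\; \det(N_k),
\]
which is the desired monotonicity.

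I do not foresee any genuine obstacle. If one prefers to avoid interlacing, the Schur complement identity $\det(N_{k+1})=\det(N_k)\bigl(N_{k+1}(k+1,k+1) - c^\top N_k^{-1} c\bigr)$, together with the bounds $N_{k+1}(k+1,k+1)=1-\|R_{k+1}(M)\|_2^2\le 1$ and $c^\top N_k^{-1}c \ge 0$, gives the same conclusion in the nondegenerate case and the degenerate case follows by a standard perturbation $N_k\leadsto N_k+\varepsilon\id$.
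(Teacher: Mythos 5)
Your proof is correct and uses essentially the same idea as the paper: both reduce the claim to Cauchy's interlacing theorem applied to the nested principal submatrices (the paper interlaces the Gram matrices $M_{k,\infty}M_{k,\infty}^*$ while you interlace $N_k = \id - M_{k,\infty}M_{k,\infty}^*$, which is an equivalent reformulation). The Schur-complement remark you append is a nice self-contained alternative, but the main argument matches the paper's.
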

\begin{proof}
First observe that $M_{k,\infty}M_{k,\infty}^*$, which is the Gram matrix of the first $k$ rows of $M$, is a $k\times k$ submatrix of $M_{k+1,\infty}M_{k+1,\infty}^*$, which is the Gram matrix of the first $k+1$ rows of $M$.
It follows from Cauchy's interlacing theorem (see, e.g., \cite[Theorem 4.3.17, p. 242]{HJ1994}) that the eigenvalues $\lambda_i$, which we assume in non-increasing order, are interlacing, i.e.,
\begin{align*}
\lambda_1(M_{k+1,\infty}M_{k+1,\infty}^*) & \geq \lambda_1(M_{k,\infty}M_{k,\infty}^*) \geq \lambda_2(M_{k+1,\infty}M_{k+1,\infty}^*) \geq \lambda_2(M_{k,\infty}M_{k,\infty}^*) \\
& \geq \dots \geq \lambda_k(M_{k,\infty}M_{k,\infty}^*) \geq \lambda_{k+1}(M_{k+1,\infty}M_{k+1,\infty}^*) \geq 0.
\end{align*}
Let us note here that in reference \cite{HJ1994} the order of eigenvalues is opposite to ours. Since
 $\lambda_1(M_{\ell,\infty}M^*_{\ell,\infty})=\|M_{\ell,\infty}M^*_{\ell,\infty}\| \leq  1$
for all $\ell\in\N$, we have
\begin{align*}
0\leq  1-\lambda_1(M_{k+1,\infty}M_{k+1,\infty}^*) & \leq 1-\lambda_1(M_{k,\infty}M_{k,\infty}^*) \\
%\geq
% \lambda_2(M_{k+1,\infty}M_{k+1,\infty}^*) \geq \lambda_2(M_{k,\infty}M_{k,\infty}^*) \\
& \leq \dots \leq 1-\lambda_k(M_{k,\infty}M_{k,\infty}^*) \leq 1-\lambda_{k+1}(M_{k+1,\infty}M_{k+1,\infty}^*) \leq 1.
\end{align*}
Therefore, we obtain that
\[
\det\big(\id_{k\times k} -  M_{k,\infty}M_{k,\infty}^*\big) \geq \det\big(\id_{(k+1)\times (k+1)} -  M_{k+1,\infty}M_{k+1,\infty}^*\big).
\]
This completes the proof.
\end{proof}

% % % % % % % % % % % % % % % % % % % % % % % % % % % % %
\subsection{Step 3 -- The weak large deviation principle}
% % % % % % % % % % % % % % % % % % % % % % % % % % % % %

We are now ready to extract from Steps 1 and 2, in combination with Proposition \ref{prop:basis topology}, the weak large deviation principle for the sequence $O^{(n)}$, $n\in\N$, of random Haar orthogonal matrices in $\mathcal O(n)$, $n\in\N$. Recall that each $O^{(n)}$ takes values in the space $[-1,1]^{\N\times \N}$ of infinite matrices and that, as explained in Step 1, a base of the product topology on $[-1,1]^{\N\times \N}$ is given by the collection of balls of the form
\[
B_{k,\ell}(M,r)
:= \Big\{ \widetilde{M}\in[-1,1]^{\N\times \N}\,:\, \big\|\widetilde{M}_{k,\ell} - M_{k,\ell}\big\|_2 \leq r \Big\},\qquad M\in [-1,1]^{\N\times \N}, r\in(0,\infty), k,\ell\in\N.
\]
By Proposition \ref{prop:basis topology} it is enough to work on this base of the topology. So let $M\in[-1,1]^{\N\times \N}$, $r\in(0,
\infty)$, and $k,\ell\in \N$. In the following, we let $n\to\infty$, so that $n\geq \max\{k,\ell\}$ may be assumed. By the ``converse'' part of Proposition \ref{prop:basis topology} and Step~1, for each fixed $k\in\N$,
\begin{align}
%\rate_{k,\infty}(M_{k,\infty})
%&=
&-\inf_{\ell\in\N}  \inf_{r\in(0,\infty)}\limsup_{n\to\infty} \frac{1}{n} \log \Pro\Big[O^{(n)}_{k,\ell} \in B_{k,\ell}(M,r)\Big] \notag \\
& =
-\inf_{\ell\in\N} \inf_{r\in(0,\infty)} \liminf_{n\to\infty} \frac{1}{n} \log \Pro\Big[O^{(n)}_{k,\ell} \in B_{k,\ell}(M,r)\Big] \notag \\
& = \sup_{\ell\in\N }\rate_{k,\ell}(M_{k,\ell})\notag\\
& = \rate_{k,\infty}(M_{k,\infty}) \notag\\
& =
  \begin{cases}
-\frac{1}{2} \log \det\big(\id_{k\times k} - M_{k,\infty} M_{k,\infty}^*\big) & : M_{k,\infty}\in \mathcal R_2^{k\times \infty} \text{ and } \|M_{k,\infty} M_{k,\infty}^*\|<1 \\
+\infty &: \text{otherwise}.
\end{cases}
\label{eq:I_k_infty_tech}
\end{align}
Let first $M$ be such that $M_{k,\infty}\in \mathcal R_2^{k\times \infty}$ and $\|M_{k,\infty} M_{k,\infty}^*\|<1$ for all $k\in\N$.
Now it follows from  Lemma~\ref{lem:log-determinant decreasing in k} that the log-determinant is non-increasing in $k$. This, in combination with the previous display, yields
\begin{align*}
 -\inf_{r\in(0,\infty)}\inf_{k, \ell\in\N} \limsup_{n\to\infty} \frac{1}{n} \log \Pro\Big[O^{(n)}_{k,\ell} \in B_{k,\ell}(M,r)\Big]
& =
-\inf_{r\in(0,\infty)}\inf_{k,\ell\in\N} \liminf_{n\to\infty} \frac{1}{n} \log \Pro\Big[O^{(n)}_{k,\ell} \in B_{k,\ell}(M,r)\Big] \\
& =
-\frac{1}{2} \lim_{k\to\infty} \log \det\big(\id_{k\times k} - M_{k,\infty} M_{k,\infty}^*\big)
\end{align*}
%for any matrix $M\in[-1,1]^{\N\times \N}$ whose rows are all in $\ell_2$ and for which $\|M_{k,\infty}M^*_{k,\infty}\|<1$ for all %$k\in\N$.
For all matrices $M\in[-1,1]^{\N\times \N}$ violating either one of the assumptions $M_{k,\infty}\in \mathcal R_2^{k\times \infty}$ or $\|M_{k,\infty}M^*_{k,\infty}\|<1$ for all $k\in \N$, the expressions on the left-hand side of the previous display are equal to $+\infty$ by~\eqref{eq:I_k_infty_tech}.  By Proposition~\ref{prop:basis topology}, we obtain that $O^{(n)}$ satisfies a weak (and thus full due to compactness) LDP at speed $n$ with good rate function $\rate_1: [-1,1]^{\N \times \N}\to [0,+\infty]$ given by
\begin{align}\label{eq:I_preliminary}
\rate_1(M)
& :=
  \begin{cases}
-\frac{1}{2} \lim_{k\to\infty} \log \det\big(\id_{k\times k} - M_{k,\infty} M_{k,\infty}^*\big) & : \text{rows of $M$ are in $\ell_2$, $\|M_{k,\infty} M_{k,\infty}^*\|<1$  $\forall\,k\in\N$} \\
+\infty &: \text{otherwise}.
\end{cases}
\end{align}
This function is convex as a pointwise limit of convex functions; see Subsection~\ref{subsec:convexity_rate_function}.

% % % % % % % % % % % % % % % % % % % % % % % % % % % % % % % % % % %
\subsection{Step 4 -- A reformulation via Hilbert--Schmidt class operators}
% % % % % % % % % % % % % % % % % % % % % % % % % % % % % % % % % % %

The LDP for $O^{(n)}$, $n\in\N$, which we obtained in the previous subsection, can be formulated in a more elegant way, namely in terms of Hilbert--Schmidt operators as presented in Theorem \ref{thm:ldp orthogonal group}. In particular, in this subsection we complete the proof of this theorem. For general background on classes of compact operators as well as traces and determinants of operators, we refer to the monographs \cite{GGK2000,K1986,P1980,P1987}. Our aim is to show that the information function $\rate_1(M)$, originally defined by~\eqref{eq:I_preliminary},  equals
\begin{align*}
\rate(M)
& :=
  \begin{cases}
-\frac{1}{2} \log \det\big(\id_{\infty \times \infty} - M M^*\big) & : \text{$M\in\mathcal S_2\,$ and $\,\|MM^*\|<1$}\\
+\infty &: \text{otherwise}.
\end{cases}
\end{align*}
We start by assuming that  $M=(m_{ij})_{i,j\in\N}\in\mathcal S_2$, i.e., $M$ is a Hilbert--Schmidt operator meaning that $\sum_{i,j\in\N}m_{ij}^2 <\infty$.   Recall that $M\in\mathcal S_2$ is equivalent to $MM^*$ being a trace class (or nuclear) operator, i.e., $MM^*\in\mathcal S_1$.  For $k\in\N$, we view $M_{k,\infty}$ as an $\infty\times \infty$-matrix by filling up this $k\times \infty$-matrix with zero rows.
Our first task is to show that
\begin{equation}\label{eq:log_det_M_k_converges}
\lim_{k\to\infty}\det\big(\id_{\infty\times \infty} - M_{k,\infty}M_{k,\infty}^*\big) = \det(\id_{\infty\times \infty} - MM^*).
\end{equation}
It is known that $\det(\id_{\infty \times \infty} - M M^*) = \det(\id_{\infty \times \infty} - M^*M)$; see Corollary 2.2 on p.~51 and use Lidskii's theorem~\cite[Theorem~6.1 on p.~63]{GGK2000}. Hence, \eqref{eq:log_det_M_k_converges} is equivalent to
\begin{equation}\label{eq:log_det_M_k_converges_*}
\lim_{k\to\infty}\det\big(\id_{\infty\times \infty} - M_{k,\infty}^*M_{k,\infty}\big)
=
\det(\id_{\infty\times \infty} - M^*M).
\end{equation}
We claim that $M^* M - M_{k,\infty}^*M_{k,\infty}$ is positive semi-definite. Indeed, for every vector $x\in \ell_2$, we have
\begin{align*}
\big\langle (M^* M -  M_{k,\infty}^*M_{k,\infty})x, x \big\rangle
&=
\big\langle Mx, Mx \big\rangle -  \big\langle M_{k,\infty} x, M_{k,\infty}x\big\rangle\\
&=
\| Mx\|^2 - \| M_{k,\infty} x\|^2
=
\sum_{j = 1}^\infty \langle R_j(M), x\rangle^2
-
\sum_{j = 1}^k \langle R_j(M), x \rangle^2
\geq
0.
\end{align*}
We shall now show that the  operator $M_{k,\infty}^*M_{k,\infty}$
converges to $M^*M$ in $\mathcal S_1$. Indeed, since the $\mathcal S_1$-norm
of a positive semi-definite matrix coincides with its trace, we have
\begin{align}\label{eq:S_1_convergence_M_k_to_M}
\|M^*M - M_{k,\infty}^*M_{k,\infty} \|_{\mathcal S_1}
=
\sum_{i\in\N} (M^*M - M_{k,\infty}^*M_{k,\infty})_{ii}
=
\sum_{i,j=1}^\infty m_{ij}^2 - \sum_{i=1}^k \sum_{j=1}^\infty m_{ij}^2
\stackrel{k\to\infty}{\longrightarrow} 0,
\end{align}
because $M$ is a Hilbert--Schmidt operator. For trace class operators $A_1,A_2,\ldots$ and $A$ it is known that $A_n \to A$ in the $\mathcal S_1$-norm implies that $\det (1-A_n) \to \det (1-A)$; see Equation (5.14) on p.~60 in~\cite{GGK2000}. In our setting, this yields~\eqref{eq:log_det_M_k_converges_*} and thus~\eqref{eq:log_det_M_k_converges}.

Now we are ready to prove that $\rate_1(M) = \rate(M)$ for all $M\in [-1,1]^{\N \times \N}$.
First, we assume that $\|MM^*\|<1$. Denote by $P_k$ the $\infty\times \infty$-matrix obtained from the $k\times k$-identity matrix by filling it up with zeroes. Then, we have $M_{k,\infty} = P_k M$ and hence $\|M_{k,\infty} M_{k,\infty}^*\| = \|P_k M M^* P_k\| \leq \|MM^*\|<1$ for all $k\in \N$ .
Hence, in this case $\rate_1(M) = \rate(M)$ by~\eqref{eq:log_det_M_k_converges}.
%For completeness, let us mention that $\det(\id_{\infty \times \infty} - M M^*)=\det(\id_{\infty \times \infty} - M^*M)$ by~\cite[p.~61]{GGK2000}.

Next, assume that  $M=(m_{ij})_{i,j\in\N}\in\mathcal S_2$ but $\|MM^*\|\geq 1$. Then, $\rate(M)= +\infty$ and we need to show that $\rate_1(M)= \infty$. Let first $\|MM^*\| > 1$. As $k\to\infty$, we have $M_{k,\infty}^*M_{k,\infty} \to M^*M$ in the operator norm (because the convergence holds even in the Schatten $\mathcal S_1$-norm, as we have shown in~\eqref{eq:S_1_convergence_M_k_to_M} above). Hence,
$$
\|M_{k,\infty}M_{k,\infty}^*\| = \|M_{k,\infty}\|^2 = \|M_{k,\infty}^*M_{k,\infty}\|
\stackrel{k\to\infty}{\longrightarrow}
\|M^*M\| =\|M\|^2 = \|MM^*\|>1.
$$
Consequently $\|M_{k,\infty}M_{k,\infty}^*\|>1$ for sufficiently large $k$ and we have $\rate_1(M)= +\infty = \rate(M)$.

Next assume that $M\in\mathcal S_2$ and $\|MM^*\|= 1$. Then, $\rate(M) = +\infty$ and we want to show that $\rate_1(M)= +\infty$.
We may assume that $\|M_{k,\infty}M_{k,\infty}^*\|<1$
for all $k\in \N$ since otherwise $\rate_1(M)= +\infty = \rate(M)$ by definition.
Denote by $(\lambda_i^{(k)})_{i=1}^k$ the eigenvalues of the $k\times k$-matrix $M_{k,\infty}M_{k,\infty}^*$ ordered decreasingly. We have that $0\leq \lambda_i^{(k)}<1$ for all $i\in\{1,\dots,k\}$.
Since $\id_{k\times k}-M_{k,\infty}M_{k,\infty}^*$ has eigenvalues $\big(1-\lambda_i^{(k)}\big)_{i=1}^k$, we have
\begin{align*}
\det\big(\id_{k\times k} - M_{k,\infty}M_{k,\infty}^*\big) & = \prod_{i=1}^k (1-\lambda_i^{(k)}) \leq 1-\lambda_1^{(k)}
\stackrel{k\to\infty}{\longrightarrow} 0
\end{align*}
since $\lambda_1^{(k)} = \|M_{k,\infty}M_{k,\infty}^*\| \to \|MM^*\| = 1$ as we have shown above. Together with~\eqref{eq:I_preliminary} this implies that $\rate_1(M) = +\infty$, as claimed.

Now, assume that $M\notin \mathcal S_2$. Then, $\rate(M)= +\infty$ and we need to show that $\rate_1(M)= +\infty$.   Assuming that the rows of $M$ are in
$\ell_2$ and $\|M_{k,\infty} M_{k,\infty}^*\|<1$  for all $k\in\N$ (since otherwise $\rate_1(M) = +\infty$)
it suffices to  show that
\[
\lim_{k\to\infty} \det\big(\id_{k\times k} - M_{k,\infty}M_{k,\infty}^*\big) = 0.
\]
Using the same notation for eigenvalues as before and the estimate $1-\lambda_i^{(k)} \leq e^{-\lambda_i^{(k)}}$, we obtain
\begin{align*}
\det\big(\id_{k\times k} - M_{k,\infty}M_{k,\infty}^*\big)
& =
\prod_{i=1}^k (1-\lambda_i^{(k)}) \leq \prod_{i=1}^k e^{-\lambda_i^{(k)}}
= e^{-\sum_{i=1}^k \lambda_i^{(k)}}
\\
&= e^{-\Tr(M_{k,\infty}M_{k,\infty}^*)}
 = e^{-\sum_{i=1}^k \sum_{j=1}^\infty m_{ij}^2}
%& = e^{-\|M_{k,\infty}\|_{\mathcal S_2}}
 \stackrel{k\to\infty}{\longrightarrow} 0,
\end{align*}
because if $M\notin\mathcal S_2$, then $\sum_{i=1}^\infty \sum_{j=1}^\infty m_{ij}^2=+\infty$.  This shows that $\rate_1(M) = +\infty$.
%\[
%-\frac{1}{2}\log \det\big(\id_{k\times k} - M_{k,\infty}M_{k,\infty}^*\big)\stackrel{k\to\infty}{\longrightarrow} \infty.
%\]

Wrapping up, this means that $\rate_1= \rate$ and hence
$O^{(n)}$, $n\in\N$, satisfies a full LDP at speed $n$ with good rate function $\rate:[-1,1]^{\N\times \N}\to[0,+\infty]$ as defined above. This completes the proof of Theorem \ref{thm:ldp orthogonal group}.

%defined as
%\begin{align*}
%\rate(M)
%& :=
%  \begin{cases}
%-\frac{1}{2} \log \det\big(\id - M M^*\big) & : \text{$M\in\mathcal S_2\,$ and $\,\|MM^*\|<1$}\\
%\infty &: \text{otherwise}.
%\end{cases}
%\end{align*}
%good rate function $\rate:[-1,1]^{\N\times \N}\to[0,\infty]$ defined as????

% % % % % % % % % % % % % % % % % % % % % % % % % % % % % % % % % % % % % % %
\section{Proof of Theorem \ref{thm:ldp multidimensional projection unif distr pball} -- the LDP for \texorpdfstring{$k$}{k}-dimensional projections of the uniform distribution on \texorpdfstring{$\ell_p^n$}{lpn}-balls} \label{sec:k-dimensional projections lp}
% % % % % % % % % % % % % % % % % % % % % % % % % % % % % % % % % % % % % % %

In this section we will prove Theorem \ref{thm:ldp multidimensional projection unif distr pball} on random projections of uniform distributions on $\ell_p^n$-balls. To be more precise, we shall reduce it to Theorem~\ref{thm:ldp multidimensional projection_product_measures} on random projections of product measures which we then prove in Section~\ref{sec:proof_theorem_D}.
The proof requires some preparation. Fix $k\in\N$. For $n\in\N$, we recall that for a random element $V_{k,n} : \R^n \to\R^k$ in the Stiefel manifold $\mathbb V_{k,n}$, chosen with respect to the uniform distribution $\mu_{k,n}$, and a random vector $X^{(n)}$ uniformly distributed on $n^{1/p}\B_p^n$ (which is independent of $V_{k,n}$), we are interested in the large deviation behavior of the random probability measure
\[
\mu_{V_{k,n}}(A) := \Pro\Big[\big(\langle R_1(V_{k,n}),X^{(n)} \rangle,\dots,\langle R_k(V_{k,n}),X^{(n)} \rangle\big) \in A \Big], \qquad A\subset \R^k \text{ Borel},
\]
where $R_1(V_{k,n}),\dots,R_k(V_{k,n})\in\R^n$ are the rows of $V_{k,n}$.
In this section we show that it is enough to prove an LDP for simpler variants of the measures $\mu_{V_{k,n}}$, denoted by $\widetilde{\mu}_{V_{k,n}}$, in which $X^{(n)}$ is replaced by a random vector $(Z_1,\ldots, Z_n)$ with independent $p$-Gaussian components. This can be done because those two measures are, as we will see, close in the L\'evy--Prokhorov distance for any realization.

% % % % % % % % % % % % % % % % % % % % % % % % % % % % % % % % % % % % % % % %
\subsection{Step 1 -- Reduction to projections of product measures}
% % % % % % % % % % % % % % % % % % % % % % % % % % % % % % % % % % % % % % % %

It follows from the result of Schechtman and Zinn (see Proposition \ref{prop:schechtman zinn}) that if $X^{(n)}$ is uniformly distributed on $n^{1/p}\B_p^n$, we have
\begin{align}\label{eq:probab representation}
X^{(n)} &\stackrel{\dint}{=} U^{1/n} n^{1/p}\frac{Z}{\|Z\|_p},
\end{align}
where $U\sim\Uni([0,1])$ is independent of $Z:=(Z_1,\dots,Z_n)$, which is a random vector whose coordinates $Z_1,\dots,Z_n$ are independent $p$-generalized Gaussian random variables, i.e., they have Lebesgue density
\[
f_p(x):= {1\over 2p^{1/p}\Gamma(1+{1\over p})}\,e^{-|x|^p/p}, \qquad x\in\R.
\]
%The normalization $n^{1/p}\B_p^n$ is chosen in such a way that the ball essentially has volume $e^{O(n)}$. {\color{red}???}

Now, using the probabilistic representation in \eqref{eq:probab representation}, we obtain that for every Stiefel matrix $V\in \mathbb V_{k,n}$ the distribution $\mu_V$ of $VX^{(n)}$ can be represented as
\begin{equation}\label{eq:mu_V_def}
\mu_{V}(A)
:=
\Pro\Bigg[ VX^{(n)} \in A \Bigg]
%&=
%\Pro\Bigg[\Big(\Big\langle R_1(V),U^{1/n} \frac{n^{1/p}Z}{\|Z\|_p} \Big\rangle,\dots,\Big\langle R_k(V),U^{1/n} \frac{n^{1/p}Z}{\|Z\|_p} \Big\rangle\Big) \in \cdot \Bigg] \cr
=
\Pro\Bigg[\sum_{j=1}^n U^{1/n}n^{1/p}\frac{Z_j}{\|Z\|_p} C_j(V) \in A \Bigg],
\qquad A\subset \R^k \text{ Borel},
\end{equation}
where $C_j(V):= (V(i,j))_{i=1}^k$ is the $j$th column of the $k\times n$ matrix $V$. We shall now compare $\mu_V$ to the following simplified variant:
\begin{equation}\label{eq:mu_V_tilde_def}
\widetilde{\mu}_{V}(A) := \Pro\Bigg[\sum_{j=1}^nZ_j C_j(V) \in A \Bigg],\qquad A\subset \R^k \text{ Borel}.
\end{equation}
Recall that $\mathcal M_1(\R^k)$ denotes the space of probability measures on $\R^k$. The next lemma shows that the L\'evy--Prokhorov distance $\rho_{\textrm{LP}}$ on $\mathcal M_1(\R^k)$ of the measures $\mu_{V}$ and $\widetilde{\mu}_{V}$ converges to $0$ uniformly over all $V\in \mathbb V_{k,n}$, as $n\to\infty$.

\begin{lemma}\label{lem:levy-prokhorov lemma}
Let $k\in\N$ fixed. Then,
\[
\lim_{n\to\infty} \sup_{V \in\mathbb V_{k,n}}\rho_{\textrm{LP}}\big(\widetilde{\mu}_{V}, \mu_{V}\big) = 0.
\]
\end{lemma}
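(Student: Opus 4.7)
The strategy is to use a coupling based on the Schechtman--Zinn representation combined with the standard Strassen-type characterization of the L\'evy--Prokhorov distance: if $(\xi,\eta)$ is any coupling of probability measures $\mu,\nu$ on $\R^k$, then
\[
\rho_{\textrm{LP}}(\mu,\nu) \leq \inf\bigl\{\varepsilon>0 : \Pro[\|\xi - \eta\|_2 > \varepsilon] \leq \varepsilon \bigr\}.
\]
Using~\eqref{eq:probab representation}, the natural coupling is $X^{(n)} = T_n \cdot Z$ on a single probability space, where $Z=(Z_1,\dots,Z_n)$ has i.i.d.\ $p$-generalized Gaussian entries and the scalar
\[
T_n := U^{1/n}\, \frac{n^{1/p}}{\|Z\|_p}
\]
depends on $U$ and $Z$ \emph{but not on} $V\in\mathbb V_{k,n}$. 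Under this coupling, $VX^{(n)} - VZ = (T_n-1)\,VZ$, and consequently
\[
\|VX^{(n)}- VZ\|_2 = |T_n - 1|\cdot \|VZ\|_2.
\]

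The two ingredients I need are a uniform-in-$V$ bound on $\|VZ\|_2$ and an (unconditional) convergence of $T_n$ to $1$. For the first, I use that $VV^* = \id_{k\times k}$ for every $V\in\mathbb V_{k,n}$, so by a short direct computation
\[
\E\bigl[\|VZ\|_2^2\bigr] = \sigma_p^2\,\Tr(VV^*) = \sigma_p^2 \, k,
\]
with $\sigma_p^2=\E[Z_1^2]$ finite. Hence, by Chebyshev, $\Pro[\|VZ\|_2 > M] \leq \sigma_p^2 k/M^2$ \emph{uniformly} in $V$ and $n$. For the second, I would verify that $\E[|Z_1|^p]=1$ (a direct gamma-integral), so the law of large numbers gives $\|Z\|_p / n^{1/p} \to 1$ almost surely; together with $U^{1/n}\to 1$ a.s., this yields $T_n \to 1$ in probability, independently of $V$.

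Given $\delta>0$, I would then pick $M := \sqrt{2\sigma_p^2 k/\delta}$ so that $\Pro[\|VZ\|_2 > M]\leq \delta/2$ for every $V$, and use the convergence of $T_n$ to choose $N$ (independent of $V$) with $\Pro[|T_n-1| > \delta/M] \leq \delta/2$ for all $n\geq N$. On the intersection of the two complementary events, $\|VX^{(n)}-VZ\|_2 \leq \delta$, giving
\[
\Pro\bigl[\|VX^{(n)}-VZ\|_2 > \delta\bigr] \leq \delta \qquad \text{for all } n\geq N,\ V\in\mathbb V_{k,n},
\]
and thus $\rho_{\textrm{LP}}(\mu_V,\widetilde\mu_V)\leq \delta$ uniformly in $V$, as required.

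There is no real obstacle; the only conceptual point is that the multiplicative noise $T_n$ is a \emph{scalar} that does not depend on $V$, while the vector part $VZ$ is uniformly controlled in $L^2$ across all $V\in\mathbb V_{k,n}$ thanks to the Stiefel identity $VV^*=\id_{k\times k}$. These two facts together are precisely what makes the convergence uniform in $V$.
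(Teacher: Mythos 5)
Your proof is correct and, while it starts from the same Schechtman--Zinn coupling $X^{(n)} = T_n Z$ with $T_n = U^{1/n} n^{1/p}/\|Z\|_p$ and the same two key observations (the scalar $T_n$ does not depend on $V$, and $\E[\|VZ\|_2^2] = \sigma_p^2 k$ uniformly over $\mathbb V_{k,n}$), it takes a genuinely lighter route than the paper. The paper bounds $\Pro[\|V X^{(n)} - VZ\|_2 \geq \varepsilon]$ by Markov and Cauchy--Schwarz, which reduces the problem to showing $\E\bigl[|T_n-1|^2\bigr] \to 0$; to get that, the authors establish uniform integrability of $(T_n-1)^2$ via a fourth-moment bound $\E\bigl[(n/\|Z\|_p^p)^{4/p}\bigr] \leq C_p$, which in turn requires a Chernoff-type tail estimate for $\frac1n\sum |Z_i|^p$ on one range and a direct density computation for small values on another. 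You sidestep all of this: you only need $T_n \to 1$ \emph{in probability} (immediate from the SLLN and $U^{1/n}\to1$ a.s.), a Chebyshev tail bound on $\|VZ\|_2$ (uniform in $V$), and a union bound, fed into the Ky Fan/Strassen characterization of $\rho_{\textrm{LP}}$. What your approach buys is a substantially shorter and more elementary argument for this lemma. What the paper's stronger $L^1$ control buys is reuse: the subsequent lemma establishing $\mu_V \in M_C$ (needed to pass from a weak to a full LDP) explicitly cites $\lim_n \sup_V \E\bigl[\|\xi_n-\widetilde\xi_n\|_2\bigr]=0$ from this proof, so if you adopted your version of Lemma~\ref{lem:levy-prokhorov lemma} you would still have to prove that $L^1$ estimate separately for the compactness step.
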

\begin{proof}
For $n\in\N$, let $A\subset\R^k$ be a Borel set, $V\in\mathbb V_{k,n}$, and $\varepsilon\in(0,\infty)$. Then, denoting by $C_1=C_1(V),\dots,C_n=C_n(V)\in\R^k$ the columns of $V$, we obtain that
\begin{align}\label{eq:levy prokhorov bound}
\widetilde{\mu}_{V}(A) & = \Pro\Bigg[\sum_{j=1}^nZ_j C_j \in A \Bigg] \cr
& \leq \Pro\Bigg[\sum_{j=1}^n U^{1/n}n^{1/p}\frac{Z_j}{\|Z\|_p} C_j \in A_{\varepsilon} \Bigg] + \Pro\Bigg[\Big\| \sum_{j=1}^nZ_jC_j - \sum_{j=1}^nZ_jC_j U^{1/n}\frac{n^{1/p}}{\|Z\|_p} \Big\|_2 \geq \varepsilon\Bigg] \cr
& \leq \mu_V(A) + \Pro\Bigg[\Big\| \sum_{j=1}^nZ_jC_j - \sum_{j=1}^nZ_jC_j U^{1/n}\frac{n^{1/p}}{\|Z\|_p} \Big\|_2 \geq \varepsilon\Bigg].
\end{align}
It is thus left to prove that the second summand on the right-hand side is bounded above by $\varepsilon$ for sufficiently large $n$. From Markov's inequality, we obtain
\begin{align*}
\Pro\Bigg[\Big\| \sum_{j=1}^nZ_jC_j - \sum_{j=1}^nZ_jC_j U^{1/n}\frac{n^{1/p}}{\|Z\|_p} \Big\|_2 \geq \varepsilon\Bigg] & \leq \varepsilon^{-1} \E\Bigg[\Big\| \sum_{j=1}^nZ_jC_j - \sum_{j=1}^nZ_jC_j U^{1/n}\frac{n^{1/p}}{\|Z\|_p} \Big\|_2\Bigg].
\end{align*}
Now, the Cauchy-Schwarz inequality shows that
\begin{align}\label{eq:estiamte expectation via cs}
   \E\Bigg[\Big\| \sum_{j=1}^nZ_jC_j - \sum_{j=1}^nZ_jC_j U^{1/n}\frac{n^{1/p}}{\|Z\|_p} \Big\|_2\Bigg]
   & =
   \E\Bigg[\Big\|\sum_{j=1}^n Z_jC_j\Big\|_2\cdot\Big| 1 - U^{1/n}\frac{n^{1/p}}{\|Z\|_p}  \Big|\Bigg] \cr
   & \leq \sqrt{\E\Bigg[\Big\|\sum_{j=1}^n Z_jC_j\Big\|_2^2\Bigg]} \cdot  \sqrt{\E\Bigg[\Big| 1 - U^{1/n}\frac{n^{1/p}}{\|Z\|_p}  \Big|^2\Bigg]}.
\end{align}
For the first factor in \eqref{eq:estiamte expectation via cs}, we have
$$
\E\Bigg[\Big\|\sum_{j=1}^n Z_jC_j\Big\|_2^2\Bigg] = \E \Bigg[\sum_{i,j=1}^n Z_i Z_j \langle C_i, C_j\rangle\Bigg]
=
\E [Z_1^2] \sum_{j=1}^n  \langle C_j, C_j\rangle
=
\E [Z_1^2] \sum_{i=1}^k \| C_j \|_2^2
=
\E [Z_1^2] k,
$$
where we used that $Z_1,\ldots,Z_n$ are i.i.d.\ and have zero mean, and that $C_1,\ldots,C_n$ are the columns of a Stiefel matrix $V$.

Next we take care of the second factor in \eqref{eq:estiamte expectation via cs}. First, we observe that by the strong law of large numbers and because $\E[|Z_i|^p]=1$ (since $Z_1,\dots,Z_n$ are independent $p$-generalized Gaussian random variables),
\[
\xi_n:=\Big( 1 - U^{1/n}\frac{n^{1/p}}{\|Z\|_p}  \Big)^2 \stackrel{\textrm{a.s.}}{\longrightarrow} 0,\qquad \text{for $n\to\infty$}.
\]
In particular, this means that we have convergence to $0$ in probability. Now, the goal is to show that $\E[\xi_n^2]\leq C$ for some constant $C\in(0,\infty)$ independent of $n$, because if this holds, then the collection $(\xi_n)_{n\in\N}$ is uniformly integrable. And since $L_1$ convergence is equivalent to convergence in probability together with uniform integrability, we obtain the $L_1$ convergence to $0$ of $(\xi_n)_{n\in\N}$. But this means that the right-hand side of \eqref{eq:estiamte expectation via cs} converges to $0$ (even uniformly on $\mathbb V_{k,n}$).

Since by standard estimates and the independence of $U$ and $Z$, we have
\[
\E[\xi_n^2] \leq C + C\,\E[U^{4/n}n^{4/p}\|Z\|_p^{-4}] = C + C\,\E[U^{4/n}]\cdot\E[n^{4/p}\|Z\|_p^{-4}] \leq C+C\,\E[n^{4/p}\|Z\|_p^{-4}]
\]
for some absolute constant $C\in(0,\infty)$, it is enough to prove the existence of
a constant $C_p\in(0,\infty)$ such that for all (sufficiently large) $n\in\N$,
\[
\E\Bigg[\Bigg(\frac{1}{\frac{1}{n}\sum_{i=1}^n|Z_i|^p}\Bigg)^{\frac{4}{p}}\Bigg] \leq C_p.
\]
First, note that since $Z_1,\dots,Z_n$ are independent $p$-generalized Gaussian random variables, each $|Z_i|^p$ satisfies $\E[|Z_i|^p]=1$ and has a Lebesgue density of the form
\begin{align}\label{eq:density modulus p gaussian to power p}
\R \ni x \mapsto \gamma_p x^{\frac{1}{p}-1}e^{-\frac{x}{p}}\, \mathbb 1_{[0,\infty)}(x),
\end{align}
with normalizing constant $\gamma_p:= \big(p^{1/p}\Gamma(1/p)\big)^{-1}\in(0,\infty)$. Since we are dealing with non-negative random variables,
\begin{align*}
\E\Bigg[\Bigg(\frac{1}{\frac{1}{n}\sum_{i=1}^n|Z_i|^p}\Bigg)^{\frac{4}{p}}\Bigg]
  & =
  \frac{4}{p}\int_0^\infty t^{\frac{4}{p}-1}\Pro\Big[\frac{1}{n}\sum_{i=1}^n|Z_i|^p < \frac{1}{t}\Big] \,\dint t \cr
  & =
  \frac{4}{p} \int_0^2 t^{\frac{4}{p}-1}\Pro\Big[\frac{1}{n}\sum_{i=1}^n|Z_i|^p < \frac{1}{t}\Big] \,\dint t + \frac{4}{p} \int_2^\infty t^{\frac{4}{p}-1}\Pro\Big[\frac{1}{n}\sum_{i=1}^n|Z_i|^p < \frac{1}{t}\Big]
  \,\dint t\cr
  & \leq
  \frac{4}{p} \int_0^2 t^{\frac{4}{p}-1}\,\dint t + \frac{4}{p} \int_2^\infty t^{\frac{4}{p}-1}\Pro\Big[\frac{1}{n}\sum_{i=1}^n|Z_i|^p < \frac{1}{t}\Big]\,\dint t \cr
  & =
  2^{\frac{4}{p}} + \frac{4}{p} \int_2^\infty t^{\frac{4}{p}-1}\Pro\Big[\frac{1}{n}\sum_{i=1}^n|Z_i|^p < \frac{1}{t}\Big] \,\dint t,
\end{align*}
where the first summand is trivially bounded by $16$ since $p\geq 1$.
It is thus left to estimate the second summand in the previous line. We again split the integral and write
\begin{align}\label{eq:separation in two parts}
\frac{4}{p}\int_2^\infty t^{\frac{4}{p}-1}\Pro\Big[\frac{1}{n}\sum_{i=1}^n|Z_i|^p < \frac{1}{t}\Big] \,\dint t
&\nonumber = \frac{4}{p}\int_2^{n^6} t^{\frac{4}{p}-1}\Pro\Big[\frac{1}{n}\sum_{i=1}^n|Z_i|^p < \frac{1}{t}\Big] \,\dint t \\
&\quad + \frac{4}{p}\int_{n^6}^\infty t^{\frac{4}{p}-1}\Pro\Big[\frac{1}{n}\sum_{i=1}^n|Z_i|^p < \frac{1}{t}\Big] \,\dint t
\end{align}
and estimate both summands separately. We start with the first, which is handled by means of a Cram\'er or Chernoff-type bound (see, e.g., \cite{DH2000}). Indeed, for any $0\leq a < 1$, it follows from Markov's inequality and the i.i.d.~property of the random variables that
\begin{align*}
\Pro\Big[\frac{1}{n}\sum_{i=1}^n|Z_i|^p \leq a \Big]
\leq
e^{-nI(a)},
% & = \Pro\Big[e^{-tn\frac{1}{n}\sum_{i=1}^n|Z_i|^p} \geq e^{-tna }\Big]
%   =
%  \Pro\Big[e^{-t\sum_{i=1}^n|Z_i|^p} \geq e^{-tna}\Big] \cr
%  & \leq
%  e^{tna}\, \E\Big[\prod_{i=1}^n e^{-t|Z_i|^p}\Big]
%   =
%  e^{n\big(ta + \log \E\big[e^{-t|Z_1|^p}\big]\big)}.
\end{align*}
%Optimizing in the parameter $t>0$, we obtain the upper bound
%\begin{align}\label{eq:chernoff bound}
%\Pro\Big[\frac{1}{n}\sum_{i=1}^n|Z_i|^p & \leq a \Big] \leq e^{n\inf_{t>0}\big(ta + \log \E\big[e^{-t|Z_1|^p}\big]\big)} = %e^{-n\varphi(a)},
%\end{align}
where $I(a):=\sup_{t\in\R} \big( ta  - \log \E\big[e^{t|Z_1|^p}\big] \big) = \sup_{t<1/p} \big( ta  - \log \E\big[e^{t|Z_1|^p}\big] \big)$ is the Cram\'er information function of the random variable $|Z_1|^p$.  Note that $I(a) \geq 0$, the only zero of $I(a)$ is $a=\E |Z_1|^p = 1$ (see, e.g., \cite[Lemma I.14]{DH2000}) and that in particular $I(1/2)>0$ . It follows from the Chernoff bound that
\begin{align*}
\frac{4}{p}\int_2^{n^6} t^{\frac{4}{p}-1}\Pro\Big[\frac{1}{n}\sum_{i=1}^n|Z_i|^p < \frac{1}{t}\Big] \,\dint t
\leq
\frac{4}{p}\int_2^{n^6} t^{\frac{4}{p}-1}\Pro\Big[\frac{1}{n}\sum_{i=1}^n|Z_i|^p < \frac{1}{2}\Big] \,\dint t
& \leq
\frac{4}{p}e^{-n I(1/2)} \int_2^{n^6} t^{\frac{4}{p}-1} \,\dint t \cr
& \leq
n^{\frac{24}{p}}e^{-nI(1/2)}
\stackrel{n\to\infty}{\longrightarrow} 0,
\end{align*}
where in the last step we used that $I(1/2)>0$.
%Hence, using the previous bounds together with the fact that $\varphi(1/2)>0$, we obtain
%\[
%\frac{4}{p}\int_2^{n^6} t^{\frac{4}{p}-1}\Pro\Big[\frac{1}{n}\sum_{i=1}^n|Z_i|^p < \frac{1}{t}\Big] \,\dint t \leq %n^{\frac{24}{p}}e^{-n\varphi(1/2)} \stackrel{n\to\infty}{\longrightarrow} 0.
%\]
We now estimate the second summand on the right-hand side of \eqref{eq:separation in two parts}. Thereto, we observe that,
 %for any $t>0$,
%\[
%\Pro\Big[\frac{1}{n}\sum_{i=1}^n|Z_i|^p < \frac{1}{t}\Big] \leq \Pro\Big[\frac{|Z_1|^p}{n} < \frac{1}{t}\Big] = \Pro\Big[|Z_1|^p < %\frac{n}{t}\Big]
%\]
since $|Z_1|^p$ has density on $\R$ given by \eqref{eq:density modulus p gaussian to power p}, we obtain, for every $t>0$,
\begin{align*}
\Pro\Big[|Z_1|^p < \frac{n}{t}\Big] = \gamma_p \int_0^{n/t}x^{\frac{1}{p}-1}e^{-\frac{x}{p}}\,\dint x \leq \gamma_p \int_0^{n/t}x^{\frac{1}{p}-1}\,\dint x = p\gamma_p \Big(\frac{n}{t}\Big)^{\frac{1}{p}}.
\end{align*}
In particular, because of positivity, the i.i.d.~property, and the previous estimate, for any $n\geq 5$,
\begin{align*}
\Pro\Big[\frac{1}{n}\sum_{i=1}^n|Z_i|^p < \frac{1}{t}\Big]
  & \leq
  \Pro\Big[\frac{|Z_1|^p}{n}+\dots+\frac{|Z_5|^p}{n} < \frac{1}{t}\Big] \cr
  & \leq \Pro\Big[\frac{|Z_1|^p}{n} < \frac{1}{t}, \dots,\frac{|Z_5|^p}{n} < \frac{1}{t}\Big] = \Pro\Big[\frac{|Z_1|^p}{n} < \frac{1}{t}\Big]^5 \leq p^5\gamma_p^5 \Big(\frac{n}{t}\Big)^{\frac{5}{p}}.
\end{align*}
Putting things together, for any sufficiently large $n\in\N$,
\begin{align*}
\frac{4}{p}\int_{n^6}^\infty t^{\frac{4}{p}-1}\Pro\Big[\frac{1}{n}\sum_{i=1}^n|Z_i|^p < \frac{1}{t}\Big] \,\dint t
   \leq
   4p^4\gamma_p^5 n^{\frac{5}{p}}\int_{n^6}^\infty t^{\frac{4}{p}-1}  t^{-\frac{5}{p}}\,\dint t
  = 4p^5\gamma_p^5 n^{\frac{5}{p}}n^{-\frac{6}{p}} \stackrel{n\to\infty}{\longrightarrow} 0.
\end{align*}
The proof is now completed by combining the bound in \eqref{eq:levy prokhorov bound} together with the one in \eqref{eq:estiamte expectation via cs} with the previous estimates.
\end{proof}

\subsection{Step 2 -- Equivalence of LDP's \& LDP lifting}

We now let $V_{k,n}$ be a random Stiefel matrix uniformly distributed on $\mathbb V_{k,n}$ and consider  $\widetilde{\mu}_{V_{k,n}}$ and $\mu_{V_{k,n}}$ as \textit{random} elements with values in $\mathcal M_1(\R^k)$ obtained by taking $V=V_{k,n}$ in the definitions~\eqref{eq:mu_V_def} and~\eqref{eq:mu_V_tilde_def} of $\widetilde{\mu}_{V}$  and  $\mu_V$.
The next lemma shows that if the modified sequence $\widetilde{\mu}_{V_{k,n}}$, $n\in\N$ satisfies an LDP, then so does $\mu_{V_{k,n}}$, $n\in\N$, with the same speed and at the same rate.

\begin{lemma}\label{lem:weak ldp for tilde mu implies weak ldp for mu}
Let $k\in\N$ be fixed. For $n\geq k$ let $V_{k,n}$ be a matrix chosen uniformly at random from $\mathbb V_{k,n}$ and, for $p\in[1,\infty]$, let $Z=(Z_1,\dots,Z_n)$ be a random vector with independent $p$-Gaussian entries such that $Z$ is independent of $V_{k,n}$.
%Denote by $C_j:=C_j(V_{k,n}):= (V_{k,n}(i,j))_{i=1}^k$ the $j$th column of $V_{k,n}$.
If the sequence $\widetilde{\mu}_{V_{k,n}}$, $n\geq k$
%\[
%\widetilde{\mu}_{V_{k,n}}(\cdot) = \Pro\Bigg[\sum_{j=1}^nZ_j C_j(V_{k,n}) \in \cdot \Bigg],\qquad n\in\N,
%\]
satisfies a weak LDP  on $\mathcal M_1(\R^k)$ at speed $n$ with rate function $\rate:\mathcal M_1(\R^k)\to[0,+\infty]$, then the sequence $\mu_{V_{k,n}}$, $n\geq k$, satisfies the same weak LDP.
%\begin{align*}
%\mu_{V_{k,n}}(\cdot)
%  & \stackrel{\dint}{=}
%     \Pro\Bigg[\sum_{j=1}^n U^{1/n}n^{1/p}\frac{Z_j}{\|Z\|_p} C_j(V_{k,n}) \in \cdot \Bigg],\qquad n\in\N,
%\end{align*}
%satisfies the same LDP, where $U\sim\Uni([0,1])$ is independent of all other random elements.
\end{lemma}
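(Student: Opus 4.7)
The plan is to deduce the LDP for $\mu_{V_{k,n}}$ from the LDP for $\widetilde{\mu}_{V_{k,n}}$ by exploiting the fact that the two sequences are deterministically close in the L\'evy--Prokhorov metric, which is provided by Lemma~\ref{lem:levy-prokhorov lemma}. Since $(\R^k, \|\cdot\|_2)$ is Polish, the weak topology on $\mathcal M_1(\R^k)$ is metrized by $\rho_{\textrm{LP}}$, so the open L\'evy--Prokhorov balls $B_r(\nu) := \{\eta\in\mathcal M_1(\R^k):\rho_{\textrm{LP}}(\eta,\nu)<r\}$, $\nu\in\mathcal M_1(\R^k)$, $r>0$, form a base of the topology. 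By Proposition~\ref{prop:basis topology}, it suffices to verify, for every $\nu\in\mathcal M_1(\R^k)$, that
\[
-\rate(\nu) = \inf_{r>0}\limsup_{n\to\infty}\tfrac{1}{n}\log\Pro\bigl[\mu_{V_{k,n}}\in B_r(\nu)\bigr] = \inf_{r>0}\liminf_{n\to\infty}\tfrac{1}{n}\log\Pro\bigl[\mu_{V_{k,n}}\in B_r(\nu)\bigr],
\]
and by hypothesis the analogous identities hold for $\widetilde{\mu}_{V_{k,n}}$.

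Fix $\nu\in\mathcal M_1(\R^k)$, $r>0$, and $\eps\in(0,r)$. Lemma~\ref{lem:levy-prokhorov lemma} yields $\sup_{V\in\mathbb V_{k,n}}\rho_{\textrm{LP}}(\widetilde{\mu}_V,\mu_V)\to 0$ as $n\to\infty$, and since $V_{k,n}\in\mathbb V_{k,n}$ surely, the event $\{\rho_{\textrm{LP}}(\widetilde{\mu}_{V_{k,n}},\mu_{V_{k,n}})<\eps\}$ occurs surely for all $n\geq n_0(\eps)$. The triangle inequality for $\rho_{\textrm{LP}}$ then gives the deterministic inclusions
\[
\bigl\{\widetilde{\mu}_{V_{k,n}}\in B_{r-\eps}(\nu)\bigr\}\subseteq\bigl\{\mu_{V_{k,n}}\in B_r(\nu)\bigr\}\subseteq\bigl\{\widetilde{\mu}_{V_{k,n}}\in B_{r+\eps}(\nu)\bigr\}
\]
for all $n\geq n_0(\eps)$, hence the corresponding sandwich of probabilities.

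Taking $\frac{1}{n}\log(\cdot)$ and then $\limsup_{n\to\infty}$ and $\liminf_{n\to\infty}$ in the sandwiched probabilities, followed by infima over $r>0$ and $\eps\in(0,r)$, the outer expressions both converge to $\inf_{s>0}\limsup\frac{1}{n}\log\Pro[\widetilde{\mu}_{V_{k,n}}\in B_s(\nu)] = -\rate(\nu)$ (respectively with $\liminf$), because as $r$ and $\eps$ range over their domains the values $r\pm\eps$ sweep out all of $(0,\infty)$. This forces
\[
\inf_{r>0}\limsup_{n\to\infty}\tfrac{1}{n}\log\Pro\bigl[\mu_{V_{k,n}}\in B_r(\nu)\bigr] = \inf_{r>0}\liminf_{n\to\infty}\tfrac{1}{n}\log\Pro\bigl[\mu_{V_{k,n}}\in B_r(\nu)\bigr] = -\rate(\nu),
\]
and Proposition~\ref{prop:basis topology} delivers the desired weak LDP for $\mu_{V_{k,n}}$ with rate function $\rate$.

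There is essentially no hard step here: the argument is a standard exponential-equivalence transfer, made especially clean by the fact that Lemma~\ref{lem:levy-prokhorov lemma} provides \emph{deterministic} (rather than merely exponentially small in probability) closeness. The only minor point requiring care is the double infimum manipulation in the last paragraph, which relies on the elementary observation that for any $s>0$ one can choose $r,\eps$ with $r\pm\eps=s$ (e.g.\ $r=s/2$, $\eps=s/2$ for $r+\eps=s$; and $r=s+\delta$, $\eps=\delta$ for $r-\eps=s$ with $\delta\downarrow 0$), so that the infima collapse correctly.
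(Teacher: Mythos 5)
Your proof is correct and follows essentially the same route as the paper's: use the metrizability of the weak topology by the L\'evy--Prokhorov metric, apply Proposition~\ref{prop:basis topology} on the base of $\rho_{\textrm{LP}}$-balls, sandwich $\Pro[\mu_{V_{k,n}}\in B_r(\nu)]$ between $\Pro[\widetilde{\mu}_{V_{k,n}}\in B_{r\mp\eps}(\nu)]$ via Lemma~\ref{lem:levy-prokhorov lemma} and the triangle inequality, and collapse the infima using monotonicity in the radius. Your bookkeeping (keeping the $\limsup$ and $\liminf$ chains separate and only collapsing after taking infima over $r,\eps$) is a bit more scrupulous than the paper's displayed chain, which as literally written mixes $\limsup$ and $\liminf$ across a sandwich where only same-type comparisons follow termwise, but this is a cosmetic difference and the paper's final step uses the same infimum argument you do.
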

\begin{proof}
We shall use Proposition \ref{prop:basis topology} in combination with Lemma \ref{lem:levy-prokhorov lemma} to prove this result. %Let us recall that
%\begin{align*}
%\mu_{V_{k,n}}(\cdot)
%  & \stackrel{\dint}{=}
%     \Pro\Bigg[\sum_{j=1}^n U^{1/n}n^{1/p}\frac{Z_j}{\|Z\|_p} C_j(V_{k,n}) \in \cdot \Bigg],\qquad n\in\N,
%\end{align*}
%where $U\sim\Uni([0,1])$ is independent of all other random elements.
By Proposition \ref{prop:basis topology} it is enough to control the large deviation behavior of $\mu_{V_{k,n}}$, $n\geq k$, on a base of the topology of $\mathcal M_1(\R^k)$, which is given, e.g., by open balls in the L\'evy-Prokhorov distance. So for $r\in(0,\infty)$ and $\nu\in\mathcal M_1(\R^k)$, we consider the ball
\[
B_r(\nu) := \Big\{ \mu\in\mathcal M_1(\R^k)\,:\, \rho_{\textrm{LP}}(\mu,\nu)< r\Big\}.
\]
From Lemma \ref{lem:levy-prokhorov lemma}, we know that
\begin{equation}\label{eq:unif_Lemma_5.2}
\lim_{n\to\infty} \sup_{V \in\mathbb V_{k,n}}\rho_{\textrm{LP}}\big(\widetilde{\mu}_{V}, \mu_{V}\big) = 0,
\end{equation}
i.e., $\rho_{\textrm{LP}}\big(\widetilde{\mu}_{V}, \mu_{V}\big)$ converges to zero uniformly on $\mathbb V_{k,n}$ as $n\to\infty$; the uniformity of convergence is required in the following argument. For $n\in\N$ sufficiently large, we obtain
\begin{align}\label{eq:upper estimate on levy-prokhorov balls}
 \frac{1}{n} \log \Pro\big[\widetilde{\mu}_{V_{k,n}} \in B_{r/2}(\nu)\big] \leq \frac{1}{n} \log \Pro\big[\mu_{V_{k,n}} \in B_r(\nu)\big] \leq \frac{1}{n} \log \Pro\big[\widetilde{\mu}_{V_{k,n}} \in B_{3r/2}(\nu)\big].
\end{align}
Indeed, because of the uniform convergence stated in~\eqref{eq:unif_Lemma_5.2}, we obtain for each realization of $V_{k,n}$ that, as long as $n\in\N$ is sufficiently large,
$
\rho_{\textrm{LP}}\big(\widetilde{\mu}_{V_{k,n}}, \mu_{V_{k,n}}\big) < r/2
$
and~\eqref{eq:upper estimate on levy-prokhorov balls} follows from the triangle inequality for the L\'evy-Prokhorov metric.
%But this simply means that for any Borel set $D\subseteq \R^k$ and sufficiently large $n\in\N$, we have
%\[
%\mu_{V_{k,n}}(D) < \widetilde{\mu}_{V_{k,n}}(D_r) + r.
%\]
%Now $\mu_{V_{k,n}} \in B_r(\nu)$ means that $\nu(A)< \mu_{V_{k,n}}(A_r)+r$  for any Borel set $A\subseteq \R^k$. Hence, by the %estimate in the previous display,
%\[
%\nu(A) < \mu_{V_{k,n}}(A_r)+r < \widetilde{\mu}_{V_{k,n}}(A_{2r}) + 2r
%\]
%and so $\widetilde{\mu}_{V_{k,n}}\in B_{2r}(\nu)$, which proves \eqref{eq:upper estimate on levy-prokhorov balls}.
Therefore,
\begin{multline*}
\limsup_{n\to\infty}
\frac{1}{n} \log \Pro\big[\widetilde{\mu}_{V_{k,n}} \in B_{r/2}(\nu)\big]
\leq
\liminf_{n\to\infty}\frac{1}{n} \log \Pro\big[\mu_{V_{k,n}} \in B_r(\nu)\big]
\\
\leq
\limsup_{n\to\infty}\frac{1}{n} \log \Pro\big[\mu_{V_{k,n}} \in B_r(\nu)\big]
\leq
\liminf_{n\to\infty}\frac{1}{n} \log \Pro\big[\widetilde{\mu}_{V_{k,n}} \in B_{2r}(\nu)\big].
\end{multline*}
Clearly, the expressions are monotone in the radius $r$ and thus taking the infimum over $r>0$ and using
the assumption that $\widetilde{\mu}_{V_{k,n}}$ satisfies a weak LDP at speed $n$ with rate function $\rate$,
\[
-\rate(\nu)
\leq
\inf_{r\in(0,\infty)}\liminf_{n\to\infty}\frac{1}{n} \log \Pro\big[\mu_{V_{k,n}} \in B_r(\nu)\big]
\leq
\inf_{r\in(0,\infty)}\limsup_{n\to\infty}\frac{1}{n} \log \Pro\big[\mu_{V_{k,n}} \in B_r(\nu)\big]
\leq
 -\rate(\nu),
\]
and so Proposition \ref{prop:basis topology} yields the weak LDP for $\mu_{V_{k,n}}$ at speed $n$ with rate function $\rate$.
%The lower bound is obtained in a similar way. Indeed, it follows again from the uniform convergence established in Lemma %\ref{lem:levy-prokhorov lemma} that, for each realization and sufficiently large $n\in\N$,
%\[
%\rho_{\textrm{LP}}\big(\widetilde{\mu}_{V_{k,n}}, \mu_{V_{k,n}}\big) < \frac{r}{2}.
%\]
%Observing that if $\widetilde{\mu}_{V_{k,n}} \in B_{r/2}(\nu)$, then $\widetilde{\mu}_{V_{k,n}}(D) < \nu(D_{r/2})+r/2$ for each %Borel set $D\subseteq\R^k$, we obtain from the previous display that, for all Borel sets $A\subseteq\R^k$,
%\[
%\mu_{V_{k,n}}(A) < \widetilde{\mu}_{V_{k,n}}(A_{r/2})+\frac{r}{2} < \nu(A_r) + r.
%\]
%his shows that, for sufficiently large $n\in\N$,
%\[
%\liminf_{n\to\infty}\frac{1}{n} \log \Pro\big[\mu_{V_{k,n}} \in B_r(\nu)\big] \geq \liminf_{n\to\infty}\frac{1}{n} \log %Pro\big[\widetilde{\mu}_{V_{k,n}} \in B_{2r}(\nu)\big].
%\]
%As before, we obtain from the weak LDP for $\widetilde{\mu}_{V_{k,n}}$, $n\in\N$, that
%\[
%\inf_{r\in(0,\infty)}\liminf_{n\to\infty}\frac{1}{n} \log \Pro\big[\mu_{V_{k,n}} \in B_r(\nu)\big]
%\geq
%\inf_{r\in(0,\infty)}\liminf_{n\to\infty}\frac{1}{n} \log \Pro\big[\widetilde{\mu}_{V_{k,n}} \in B_{2r}(\nu)\big] = -\rate(\nu).
%\]
%Thus, putting upper and lower bound together, we obtain
%\[
%-\rate(\nu) = \inf_{r\in(0,\infty)}\limsup_{n\to\infty}\frac{1}{n} \log \Pro\big[\mu_{V_{k,n}} \in B_r(\nu)\big] = %\inf_{r\in(0,\infty)}\liminf_{n\to\infty}\frac{1}{n} \log \Pro\big[\mu_{V_{k,n}} \in B_r(\nu)\big]
%\]
%and so Proposition \ref{prop:basis topology} yields the weak LDP for $\mu_{V_{k,n}}$, $n\in\N$, at speed $n$ with rate function $\rate$.
\end{proof}

The following lemma is needed  in order to lift the weak LDP to a full one since on a compact space both notions coincide.

\begin{lemma}
There is $C\in(0,\infty)$ such that, for all $n\geq k$ and each $V\in \mathbb V_{k,n}$, we have
$$
\mu_{V}\in M_{C} :=\Bigg\{ \mu\in\mathcal M_1(\R^k)\,:\, \int_{\R^k} \|x\|_2\,\mu(\dint x) \leq C \Bigg\}.
$$
The set $M_C$ is compact in $\mathcal M_1(\R^k)$ for all $C\in(0,\infty)$. %Consequently, the  $\mu_{V_{k,n}}$, $n\in\N$,
\end{lemma}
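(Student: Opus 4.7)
The plan is to handle the two claims---the uniform norm bound for $\mu_V$ and the compactness of $M_C$---in turn.

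For the uniform bound on $\int \|x\|_2\,\mu_V(\dint x) = \E\|V X^{(n)}\|_2$, I would first apply Jensen's inequality to reduce to the second moment. Writing $\|V X^{(n)}\|_2^2 = \sum_{i=1}^k \langle R_i(V), X^{(n)}\rangle^2$ and using that the coordinates of $X^{(n)}$ are symmetric, hence pairwise uncorrelated, and identically distributed, the cross-terms vanish and the orthonormality $\sum_{i=1}^k \|R_i(V)\|_2^2 = k$ yields $\E\|V X^{(n)}\|_2^2 = k\,\E[(X^{(n)}_1)^2]$. This reduces the task to the $V$-free uniform bound $\sup_n \E[(X^{(n)}_1)^2] < \infty$. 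For this I would invoke the Schechtman--Zinn representation (Proposition~\ref{prop:schechtman zinn}) to write
\[
\E[(X^{(n)}_1)^2] = \frac{1}{n}\E\|X^{(n)}\|_2^2 = \frac{1}{n}\E[U^{2/n}]\,n^{2/p}\,\E\left[\frac{\|Z\|_2^2}{\|Z\|_p^2}\right],
\]
apply Cauchy--Schwarz to decouple numerator from denominator, namely $\E[\|Z\|_2^2/\|Z\|_p^2] \leq \sqrt{\E\|Z\|_2^4}\,\sqrt{\E[\|Z\|_p^{-4}]}$, and then combine $\E\|Z\|_2^4 = O(n^2)$ (direct expansion using independence and finite moments of $Z_1$) with the bound $n^{4/p}\E[\|Z\|_p^{-4}] = O(1)$ extracted from the Chernoff/Markov estimate in the proof of Lemma~\ref{lem:levy-prokhorov lemma}. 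This yields $\E[(X^{(n)}_1)^2] = O(1)$ uniformly in $n$, and one takes $C$ to be $\sqrt{k}$ times the square root of the resulting constant.

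For compactness of $M_C$ (for arbitrary $C > 0$), I would combine Prokhorov with closedness. Markov's inequality gives $\mu(\{\|x\|_2 > R\}) \leq C/R$ for every $\mu \in M_C$ and every $R > 0$, so the compact Euclidean balls of radius $R$ carry mass at least $1 - C/R$ uniformly over $\mu\in M_C$, which establishes tightness. For closedness in the weak topology, I would use the identity
\[
M_C = \bigcap_{R>0}\left\{\mu\in\mathcal M_1(\R^k) : \int (\|x\|_2\wedge R)\,\mu(\dint x) \leq C\right\},
\]
valid by monotone convergence; each set on the right is closed because $x\mapsto \|x\|_2\wedge R$ is bounded and continuous, so the corresponding integral is a weakly continuous functional of $\mu$.

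The main obstacle is the uniform-in-$n$ bound $\sup_n \E[(X^{(n)}_1)^2] < \infty$. While thin-shell intuition yields $\E[(X^{(n)}_1)^2]\to\sigma_p^2$ pointwise, turning this into a quantitatively uniform estimate requires the Schechtman--Zinn representation together with the negative-moment control of $\|Z\|_p$ already developed in the proof of Lemma~\ref{lem:levy-prokhorov lemma}.
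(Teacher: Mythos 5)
Your proposal is correct, and it arrives at the uniform bound by a somewhat different route than the paper. The paper recycles the two quantities it has just established in the proof of Lemma~\ref{lem:levy-prokhorov lemma} --- namely $\lim_{n\to\infty}\sup_{V}\E\|\xi_n-\widetilde\xi_n\|_2=0$ and $\E\|\widetilde\xi_n\|_2^2=k\,\E[Z_1^2]$, where $\xi_n\sim\mu_V$ and $\widetilde\xi_n\sim\widetilde\mu_V$ --- and simply applies the triangle inequality $\|\xi_n\|_2\le\|\xi_n-\widetilde\xi_n\|_2+\|\widetilde\xi_n\|_2$ together with H\"older. You instead compute $\E\|VX^{(n)}\|_2^2=k\,\E[(X^{(n)}_1)^2]$ directly from the orthonormality of $V$ and the symmetry/exchangeability of the coordinates of $X^{(n)}$, then bound $\E[(X^{(n)}_1)^2]$ uniformly via Schechtman--Zinn, Cauchy--Schwarz, the elementary estimate $\E\|Z\|_2^4=O(n^2)$, and the uniform negative-moment bound $n^{4/p}\E\|Z\|_p^{-4}=O(1)$. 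Both arguments ultimately feed on the same negative-moment control of $\|Z\|_p$ developed in Lemma~\ref{lem:levy-prokhorov lemma}, so they are not logically independent; the paper's version is shorter because it reuses the comparison $\xi_n-\widetilde\xi_n$ as a black box, while yours is slightly more self-contained at the cost of redoing a Cauchy--Schwarz decoupling. One small point worth making explicit in your write-up: the negative-moment bound in the paper is only established for $n$ large, so your argument (like the paper's) gives a uniform bound only eventually; the finitely many remaining $n$ are handled by compactness of $\mathbb V_{k,n}$ and continuity of $V\mapsto\E\|VX^{(n)}\|_2$. Your compactness argument for $M_C$ (tightness via Markov, closedness via the truncation $\|x\|_2\wedge R$ and monotone convergence, then Prokhorov) is the same as the paper's.
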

\begin{proof}
%Let $C\in(0,\infty)$ and consider the set
%\[
%_{C} :=\Bigg\{ \mu\in\mathcal M_1(\R^k)\,:\, \int_{\R^k} \|x\|_2\,\mu(\dint x) \leq 2C \Bigg\}.
%\]
As was shown in the proof of Lemma~\ref{lem:levy-prokhorov lemma} (see, in particular,~\eqref{eq:estiamte expectation via cs} and the argument following it), we have
$$
\lim_{n\to\infty} \sup_{V\in \mathbb V_{k,n}} \E \big[\|\xi_n - \widetilde \xi_n\|_2\big] =0
\qquad
\text{ and }
\qquad
\E\big[ \|\widetilde \xi_n\|_2^2\big] = k\, \E[|Z_1|^2] =: B<\infty,
$$
where for each $V\in \mathbb V_{k,n}$
$$
\xi_n := \xi_n(V) =  \sum_{j=1}^nZ_jC_j(V) U^{1/n}\frac{n^{1/p}}{\|Z\|_p},
\qquad
\widetilde \xi_n :=\widetilde \xi_n(V) =  \sum_{j=1}^nZ_jC_j(V)
$$
are random vectors in $\R^k$ with distributions $\mu_{V}$ and $\widetilde \mu_{V}$, respectively. It follows from H\"older's inequality that $\E[\|\widetilde \xi_n\|_2]  \leq \sqrt B$. Hence, the triangle inequality $\|\xi_n \|_2 \leq \|\xi_n - \widetilde \xi_n\|_2 + \|\widetilde \xi_n\|_2$ implies that
$$
\limsup_{n\to\infty} \sup_{V\in \mathbb V_{k,n}} \E \big[\|\xi_n\|_2\big] \leq \sqrt B < \infty,
$$
which implies that for sufficiently large $C\in(0,\infty)$ and all $n\in\N$, $V\in \mathbb V_{k,n}$, we have $\mu_{V} \in M_C$.

The compactness of $M_C$ in the weak topology on $\mathcal M_1(\R^k)$ follows by a standard argument based on Prokhorov's theorem \cite{Proh1956} (see also \cite[Theorem 16.3]{Kallenberg} or \cite[Theorem 13.29]{Klenke2014}); in particular the weak compactness does not depend on the choice of $C\in(0,\infty)$ above . Let us show this.
%We start by showing that for each $c>0$ the set $K_c:= \{\nu \in \mathcal M(\R): \int_\R |x|^p \nu(\dint x) \leq c\}$ is compact in $\mathcal M(\R)$.
The closedness of $M_C$ follows by considering the cut-off $\min\{\|x\|_2, n\}$ and using the definition of the weak topology together with the monotone convergence theorem. It therefore just remains to show that $M_C$ is relatively compact in $\mathcal M_1(\R^k)$. To do this, we prove that $M_C$ is a tight family of measures and then use Prokhorov's theorem, which states that a tight family of probability measures is relatively compact in the weak topology. Given some $\varepsilon>0$, putting $C_\varepsilon:=C/\eps$ and using Markov's inequality, for every $\mu\in M_C$, we obtain
$$
\mu\Big(\big\{x\in\R^k\,:\,\|x\|_2>C_\varepsilon\big\}\Big) \leq {\int_{\R^k}\|x\|_2\mu(\dint x) \over C_\varepsilon}\leq {C\over C_\varepsilon} = \eps.
$$
Therefore, we have
\[
\sup_{\mu\in M_C}\mu\big(\big\{x\in\R^k\,:\,\|x\|_2\leq C_\varepsilon\big\}^c\big) \leq \varepsilon,
\]
which shows that the family $M_C$ of probability measures is tight. Thus, by Prokhorov's theorem and the fact that $M_C$ is closed in the weak topology, the set $M_C$ is compact in $\mathcal M_1(\R^k)$.
\end{proof}

%\begin{rmk}
The previous lemma together with Lemma \ref{lem:weak ldp for tilde mu implies weak ldp for mu} implies that under the assumption that $\widetilde{\mu}_{V_{k,n}}$ satisfies a weak LDP at speed $n$ with rate function $\rate$, the sequence $\mu_{V_{k,n}}$, $n\in\N$, satisfies a full LDP at the same speed with the same rate function; in particular, the rate function is good. Knowing this, Theorem~\ref{thm:ldp multidimensional projection unif distr pball} becomes a consequence of Theorem \ref{thm:ldp multidimensional projection_product_measures}. The latter theorem will be established in the next Section~\ref{sec:proof_theorem_D}, its proof being quite delicate.
%\end{rmk}

%{\color{red} This is the end of the proof of Theorem~\ref{thm:ldp multidimensional projection unif distr pball}. The rest of this section should be removed. }

% % % % % % % % % % % % % % % % % % % % % % % % % % % % % % % % % % % % % % %
\section{Proof of Theorem \ref{thm:ldp multidimensional projection_product_measures} -- the LDP for \texorpdfstring{$k$}{k}-dimensional projections of product measures}\label{sec:proof_theorem_D}
% % % % % % % % % % % % % % % % % % % % % % % % % % % % % % % % % % % % % % %
In this section we prove Theorem~\ref{thm:ldp multidimensional projection_product_measures}, whose proof consists of seven steps.
The general strategy is to prove an LDP on a  compact space $\mathbb W_k$  which will be introduced in Step~1. This LDP will then be transported to an LDP on the space $\mathcal M_1(\R^k)$ of probability measures by means of mapping $\Psi$ (defined in Step~2) which we shall show to be a homeomorphism in Steps 3, 4 (injectivity) and  5 (continuity). The LDP on $\mathbb W_k$ shall be proved in Step 6. In Step~7 we shall map this LDP to an LDP on $\mathcal M_1(\R^k)$.

\subsection{Step 1 -- Space of deviations}
We are going to define a space $\mathbb W_k$ (endowed with the topology of vague convergence) whose elements encode, in some sense, all possible deviations of the projected high-dimensional product measures. Let us first try to give some intuition. Large deviations of the projected product measure are caused by ``atypical'' realizations of the random Stiefel matrix $V_{k,n}$. In a ``typical'' realization, all columns are infinitesimal, while for atypical ones certain columns are of order $1$. The positions of these columns and their signs do not influence the shape of the projected measure, which is why we record these columns  (together with their negatives) as a symmetric point configuration in $\R^k$. The typical realizations of the random Stiefel matrix correspond to the empty configuration.

Let us be more precise. Fix $k\in\N$, the dimension of the space we project on.   We denote by $\mathbb W_k$ the set of all Borel measures $\mu$ on $[-1,1]^k\backslash\{0\}$ with the following properties:
\begin{itemize}
\item[($\mathbb W1$)] $\mu$ is symmetric, which means that it is invariant under the mapping $x\mapsto -x$.
\item[($\mathbb W2$)] $\mu$ is locally finite on $[-1,1]^k\backslash\{0\}$, meaning that $\mu(K)<\infty$ for every compact set $K\subset [-1,1]^k\backslash\{0\}$.
\item[($\mathbb W3$)] $\mu$ is integer-valued, meaning that $\mu(B)\in \N \cup \{0,\infty\}$ for every Borel set $B\subset [-1,1]^k\backslash\{0\}$.
\item[($\mathbb W4$)] The above three properties imply (see, e.g., \cite[Proposition~1.1.2 on p.~9]{K1978}) that $\mu$ admits a representation of the form
$$
\mu = \sum_{j=1}^{N}(\delta_{C_j} + \delta_{-C_j}),
$$
where $N=N(\mu)\in\N\cup\{0,\infty\}$  and $C_1,C_2,\ldots\in [-1,1]^k\backslash\{0\}$. If $N=0$, then $\mu=0$. If $N\in \N$ is finite, then $C_1,\ldots,C_N$ are some points in $[-1,1]^k\backslash\{0\}$. If $N=+\infty$, then $C_1,C_2,\ldots [-1,1]^k\backslash\{0\}$ is a sequence which converges to $0\in\R^k$.
\item[($\mathbb W5$)] Additionally, we require the $k\times\infty$ matrix $V(\mu)$ whose columns are the vectors $C_1,C_2,\ldots$ to have square summable rows, i.e., $V(\mu)\in \mathcal R_2^{k\times \infty}$, and to satisfy $\|V(\mu) V(\mu)^*\| \leq 1$. More precisely, $V(\mu)$ is defined as follows. If $N=0$, then $V(\mu)$ is the zero $k\times\infty$-matrix. If $N\in \N$ is finite, then we define $V(\mu)$ to be the $k\times\infty$ matrix with columns $C_1,\ldots,C_N$ filled up with infinitely many zero columns. Finally, if $N=\infty$, then $V(\mu)$ is the $k\times \infty$-matrix with columns $C_1,C_2,\ldots$.  Let us stress that the matrix $V(\mu)$ is defined up to a signed permutation of its non-zero columns only. However, as one readily checks, the $k\times k$-matrix $V(\mu)V(\mu)^*$ does not change under signed permutations of the columns of the matrix $V(\mu)$ and is therefore well defined.
\end{itemize}

We shall endow the set $\mathbb W_k$ with the topology of vague convergence of locally finite measures (see, e.g.,~\cite[Section~3.4]{R2008}). Recall that a sequence $(\mu_n)_{n\in\N}\subset \mathbb W_k$ of measures converges in the vague sense if and only if for each continuous, compactly supported function $f:[-1,1]^k\backslash\{0\}\to\R$, we have
\begin{equation}\label{eq:vague_conv_f}
\lim_{n\to\infty} \int_{[-1,1]^k\backslash\{0\}} f \,\dint\mu_n = \int_{[-1,1]^k\backslash\{0\}} f\,\dint\mu.
\end{equation}
It is known~\cite[Proposition~3.17]{R2008} that the vague topology is metrizable by a complete, separable metric. In the following lemma, we show that $\mathbb W_k$ is compact.
%As we shall show in Lemma~\ref{lem:W_k_compact}, the space $\mathbb W_k$ is compact.

\begin{lemma}\label{lem:W_k_compact}
The space $\mathbb W_k$ is compact.
\end{lemma}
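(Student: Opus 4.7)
The plan is to reduce compactness to sequential compactness --- permissible because the vague topology on locally finite Borel measures on the locally compact space $[-1,1]^k\backslash\{0\}$ is metrizable by a complete separable metric --- and then to control arbitrary sequences via an a priori $L^2$-mass bound. The basic estimate is that every $\mu\in \mathbb{W}_k$ satisfies
$$
\int_{[-1,1]^k\backslash\{0\}} \|x\|_2^2\, d\mu(x) \;=\; 2\,\Tr\big(V(\mu)V(\mu)^*\big) \;\leq\; 2k\,\|V(\mu)V(\mu)^*\| \;\leq\; 2k,
$$
since the trace of a positive semi-definite $k\times k$ matrix is at most $k$ times its operator norm. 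Hence, for any compact $K\subset [-1,1]^k\backslash\{0\}$ with $\delta := \min_{x\in K}\|x\|_2>0$, we have $\mu(K)\leq 2k/\delta^2$ uniformly over $\mu\in\mathbb W_k$, and classical vague compactness of locally uniformly bounded integer-valued Radon measures gives, for every sequence $(\mu_n)\subset \mathbb{W}_k$, a vaguely convergent subsequence $\mu_{n_j}\to \mu$ whose limit $\mu$ is a locally finite Radon measure on $[-1,1]^k\backslash\{0\}$.

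I would then verify that $\mu\in \mathbb{W}_k$. Symmetry (W1) is preserved under the vague limit because $f\mapsto f\circ(-\mathrm{id})$ is an involution on $C_c([-1,1]^k\backslash\{0\})$; local finiteness (W2) is built into the vague limit; and integer-valuedness (W3) is a standard fact about vague limits of integer-valued Radon measures on a locally compact space. This forces the representation in (W4), leaving only the spectral bound (W5), which is the main obstacle: the matrix $V(\mu)V(\mu)^*$ is recovered from $\mu$ by integrating $x\mapsto \langle u,x\rangle^2$, a test function that is bounded but not compactly supported in $[-1,1]^k\backslash\{0\}$, so vague convergence does not pass to it directly.

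To overcome this, for each $u\in \SSS^{k-1}$ and $\eps>0$ I would introduce a continuous cutoff $\chi_\eps:[-1,1]^k\backslash\{0\}\to[0,1]$ with $\chi_\eps\equiv 1$ on $\{\|x\|_2\geq \eps\}$ and vanishing near $0$, so that $x\mapsto \langle u,x\rangle^2\chi_\eps(x)$ becomes continuous and compactly supported in $[-1,1]^k\backslash\{0\}$. Vague convergence together with the identity $\int \langle u,x\rangle^2\, d\mu_{n_j}(x) = 2u^*V(\mu_{n_j})V(\mu_{n_j})^*u \leq 2$ then yields
$$
\int \langle u,x\rangle^2\chi_\eps(x)\, d\mu(x) \;=\; \lim_{j\to\infty}\int \langle u,x\rangle^2\chi_\eps(x)\, d\mu_{n_j}(x)\;\leq\; 2,
$$
and letting $\eps\downarrow 0$ via monotone convergence produces $\int \langle u,x\rangle^2\, d\mu(x)\leq 2$. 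Taking $u$ to run over the standard basis and summing shows that the rows of $V(\mu)$ are square summable, so $V(\mu)\in \mathcal R_2^{k\times\infty}$; the pointwise identity $u^*V(\mu)V(\mu)^*u = \tfrac12 \int \langle u,x\rangle^2\, d\mu\leq 1$ valid for all $u\in \SSS^{k-1}$ then furnishes $\|V(\mu)V(\mu)^*\|\leq 1$, establishing (W5) and completing the proof.
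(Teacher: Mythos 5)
Your proof is correct and takes a genuinely different route from the paper's. Both arguments hinge on the same core estimate, namely that $\|V(\mu)V(\mu)^*\|\leq 1$ forces $\sum_j\|C_j\|_2^2\leq k$, equivalently $\int\|x\|_2^2\,\dint\mu\leq 2k$, which gives a uniform bound on $\mu(K)$ for every compact $K\subset[-1,1]^k\backslash\{0\}$. The paper then proceeds elementarily: it introduces the countable set $E$ of accumulation points of atoms, fixes a sequence of radii $\eps_r\downarrow 0$ missed by $E$, uses the pigeon-hole principle to extract a subsequence along which the finitely many atoms in each annulus $\{\|x\|_2\geq\eps_r\}$ stabilize in number and converge in position, and finally applies Cantor's diagonal argument. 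You instead invoke the general Prokhorov-type vague compactness theorem for Radon measures with uniform local mass bounds, together with closedness of the integer-valued Radon measures under vague limits, to obtain a vaguely convergent subsequence in one stroke, and then verify that the limit lies in $\mathbb W_k$. The non-trivial part of that verification, the spectral bound (W5), is not a vague-continuous functional (the test function $\langle u,\cdot\rangle^2$ has no compact support in $[-1,1]^k\backslash\{0\}$); your compactly supported cutoff plus monotone (or Fatou) argument handles this cleanly, and is in fact more explicit than the paper, which leaves the passage of (W5) to the limit essentially implicit in its atom-by-atom construction. Your approach trades elementarity for abstraction: it imports two standard point-process facts but requires a careful closedness check, whereas the paper's bare-hands construction automatically produces a limit whose representation is visible from the start. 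Both are valid; your version is arguably shorter and more transparent about why (W5) survives.
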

\begin{proof}
Take a sequence $\mu_1,\mu_2,\ldots$ in $\mathbb W_k$. We have to show that it has a convergent subsequence.

\vspace*{2mm}
\noindent
\textsc{Step 1}. Let $E$ be the set of all points $x\in [-1,1]^k$ representable as $x= \lim_{n\to\infty} x_n$, where $x_n$ is an atom of $\mu_n$, for every $n\in \N$. We show that $E$ is countable. It suffices to check that $E\cap \B_2^k(0,\eps)^c$ is finite for every $\eps>0$, where $\B_2^k(0,\eps)^c=[-1,1]^k\backslash \B_2^k(0,\eps)$. Now, for every $\mu=\sum_{j=1}^N (\delta_{C_j}+\delta_{-C_j})\in \mathbb W_k$, we have
$\sum_{j=1}^N \|C_j\|_2^2 \leq k$, which holds because the condition $\|V(\mu) V(\mu)^*\| \leq 1$ implies $\sum_{j=1}^N |C_j(i)|^2 \leq 1$ for each $i\in\{1,\dots,k\}$. It follows that the number of $j$ with $\|C_j\|_2 \geq  \eps/2$ is at most $4k/\eps^2$. In particular, the number of atoms of $\mu_n$ outside $\B_2^k(0,\eps/2)$ is at most $8k/\eps^2$, for every $n\in \N$. Now assume that $E\cap \B_2^k(0,\eps)^c$ contains  $L > 8k/\eps^2$ different points $p_1= \lim_{n\to\infty} x_{1;n},\ldots,p_L=\lim_{n\to\infty} x_{L;n}$, with $x_{1;n},\ldots, x_{L;n}\in \B_2^k(0,\eps/2)^c$ being atoms of $\mu_n$, for $n>n_0$. By the pigeon-hole principle, for each $n>n_0$ two of the points $x_{1;n},\ldots, x_{L;n}$ must be equal. It follows that there exist two different $i,j\in \{1,\ldots, L\}$ such that the sequences $(x_{i;n})_{n\in \N}$ and $(x_{j;n})_{n\in \N}$ have infinitely many common terms. This implies $p_i=p_j$, which is a contradiction. Hence, $E\cap \B_2^k(0,\eps)^c$ is finite.

\vspace*{2mm}
\noindent
\textsc{Step 2}. Fix some $\eps>0$ with the property that $E$ does not contain points with $\|x\|_2 = \eps$. The number of atoms of $\mu_n$ with a norm $\geq \eps$ is bounded above by $2 k/ \eps^2$ (see argument in Step 1). By the pigeon-hole principle, along some subsequence of $n$'s, $\mu_n(\B_2^k(0,\eps)^c) = 2p$ stays constant, where we denote the atoms of $\mu_n$ with norm $\geq \eps$ by $\pm C_{1;n}, \ldots, \pm C_{p;n}$. Since $\B_2^k(0,\eps)^c$ is compact, we can again pass to a subsequence of $n$'s (and relabel the atoms, if necessary) along which we have $C_{j,n} \to C_j$ as $n\to\infty$,  for all $j=1,\ldots, p$. Note that $\|C_j\|_2 \neq \eps$ by our choice of $\eps$. By definition of vague convergence~\eqref{eq:vague_conv_f}, this implies that the restriction of $\mu_n$ to $\B_2^k(0,\eps)^c$ converges vaguely along the subsequence constructed above.

\vspace*{2mm}
\noindent
\textsc{Step 3}. By Step 1 we can find a decreasing sequence $\eps_r\downarrow 0$ such that $E$ does not contain points with $\|x\|_2 = \eps_r$, for all $r\in \N$. Applying Step~2 to $\eps= \eps_1$, we extract a subsequence $N_1\subset N$ along which the restrictions of $\mu_n$ to $\B_2^k(0,\eps_1)^c$ converge vaguely.  Applying  Step~2 to $\eps= \eps_2$, we find a subsequence $N_2\subset N_1$ along which the restrictions of $\mu_n$ to $\B_2^k(0,\eps_2)^c$ converge vaguely.
Doing this inductively, we obtain subsequences $N_1 \supset N_2 \supset \ldots$. Applying Cantor's diagonal argument, we get a subsequence $N_\infty \subset N$ along which the restrictions of $\mu_n$ to $\B_2^k(0,\eps_r)^c$ converge vaguely for every $r\in\N$. This implies that $\mu_n$ converges vaguely along $N_\infty$, thus establishing the compactness of $\mathbb W_k$.
\end{proof}

\begin{rmk}
In the special case when $k=1$, there is an alternative description of the space $\mathbb W_1$ which was used in~\cite{JKP2021}. Consider the set of all sequences $\alpha = (\alpha_1,\alpha_2,\ldots)\in \mathbb R^\infty$ such that  $\alpha_1\geq \alpha_2 \geq \ldots \geq 0$ and  $\|\alpha\|_2 \leq 1$ and endow it with the topology of coordinatewise convergence inherited from $\R^\infty$.
One can check that assigning $\alpha \mapsto \sum_{i\in \N: \alpha_i\neq 0} (\delta_{\alpha_i} + \delta_{-\alpha_i})$ defines a homeomorphism  between this space and $\mathbb W_1$. For $k\geq 2$, it is possible to order the atoms of $\mu\in \mathbb W_k$ in decreasing order of their norms, but there is no canonical ordering of atoms having equal norm, and also no canonical choice of the ``signs''. Therefore, it seems inconvenient to order the atoms for $k\geq 2$.
\end{rmk}

\begin{rmk}
The elements of $\mathbb W_k$ are in one-to-one correspondence with equivalence classes of  $k\times \infty$-matrices $V$ with square summable rows and $\|V V^*\| \leq 1$, where we call two such matrices equivalent, if they differ by a signed permutations of their columns.
However, one should be careful about the topology. Consider for simplicity the case $k=1$. Let $\mathbb B_2^\infty= \{x\in\ell_2: \|x\|_2\leq 1\}$ be the unit ball in the Hilbert space $\ell_2$ of square summable sequences endowed with the topology of coordinatewise convergence.  Call two sequences $\alpha',\alpha''\in \mathbb B_2^\infty$ equivalent if they differ by a signed permutation of coordinates. Let $W_1'$ be the space of equivalence classes endowed with the quotient topology. Then, there is a natural bijection between the elements of $W_1'$ and $\mathbb W_1$ which maps the equivalence class of $\alpha= (\alpha_1,\alpha_2,\ldots)$ to $\sum_{i\in \N: \alpha_i\neq 0} (\delta_{\alpha_i} + \delta_{-\alpha_i})$.   However, it is not a homeomorphism. In fact, the space $W_1'$ is not Hausdorff (whereas $\mathbb W_1$ is Polish). Indeed, one can check that any neighborhood of $(0,0,\ldots)$ in the quotient topology of $W_1'$ contains the element $(1,0,0,\ldots)$.
\end{rmk}

\subsection{Step 2 --  Definition of the map \texorpdfstring{$\Psi$}{Psi}}
Let $Y_1,Y_2,\ldots$ be non-Gaussian i.i.d.\ random variables with symmetric distribution (meaning that $Y_i$ has the same distribution as $-Y_i$) and finite variance $\sigma^2 := \E [Y_1^2] <\infty$. We define a map $\Psi: \mathbb W_k \to \mathcal M_1(\R^k)$ as follows: for $\mu=\sum_{j=1}^N (\delta_{C_j}+\delta_{-C_j})\in \mathbb W_k$, we put
\[
\Psi(\mu) := \text{Law}\,\Big(\sum_{j=1}^N C_jY_j + \sigma \left(\id_{k\times k} - V(\mu)V(\mu)^*\right)^{1/2} N_k\Big),
\]
where $N_k$ is a $k$-dimensional standard Gaussian random vector independent of the sequence $(Y_i)_{i=1}^\infty$. For example, $\Psi(0)$ is an isotropic Gaussian distribution on $\R^k$ with covariance matrix $\sigma^2 \id_{k\times k}$.
Let us argue that the mapping $\Psi$ is well-defined. First of all, as explained in the previous subsection, the term $\sigma (\id_{k\times k} - V(\mu)V(\mu)^*)^{1/2} N_k$  is well defined. Furthermore, it is clear that the law of $\sum_{j=1}^NC_jY_j$ is invariant under   signed permutations of $C_1,\ldots,C_N$ if $N<\infty$. On the other hand, if $N=\infty$, then the series $\sum_{j=1}^NC_jY_j$ converges a.s.\ (because its terms are independent and their $L^2$-norms are summable) and, moreover, the distribution of the sum is invariant under signed permutation of the summands, as we shall see in the proof of Lemma~\ref{lem:vague vs weak convergence}.  It also follows  that for every $\mu \in \mathbb W_k$, the probability measure $\Psi(\mu)$ is symmetric  and its covariance matrix is $\sigma^2 \id_{k\times k}$. Indeed, the covariance matrix of the random vector $\sum_{j=1}^N C_jY_j$ is just $\sigma^2 V(\mu)V(\mu)^*$.

In the following Sections \ref{sec:equality linear forms} and \ref{sec:injectivity}, we shall show that the map $\Psi$ is injective. Moreover, we shall prove that $\Psi$ is a homeomorphism between $\mathbb W_k$ and its image $\mathcal K_{k,Y_1} = \Psi (\mathbb W_k)$, which is a compact subset of $\mathcal M_1(\R^k)$ endowed with the topology of weak convergence.
After these preparatory results, our approach to prove Theorem~\ref{thm:ldp multidimensional projection_product_measures} is as follows. Given a random uniform matrix $V_{k,n}$ from the Stiefel manifold $\mathbb V_{k,n}$, we are interested in the random probability measure $\widetilde \mu_{V_{k,n}}$ defined as the image of the law of $(Y_1,\ldots,Y_n)$ on $\R^n$ under the random map $V_{k,n}:\R^n\to \R^k$ (cf. \eqref{eq:mu_V_tilde_def}).
Consider the following  random element taking values in $\mathbb W_k$:
$$
\eta_n = \sum_{j=1}^n \left(\delta_{C_j(V_{k,n})} + \delta_{-C_j(V_{k,n})}\right),
$$
where $C_1(V_{k,n}),\ldots, C_n(V_{k,n})$ are the columns of the matrix $V_{k,n}$.
Since $V_{k,n} V_{k,n}^* = \id_{k\times k}$, it follows from the very definition of the map $\Psi$ that $\Psi (\eta_n) = \widetilde \mu_{V_{k,n}}$. We shall prove that the sequence $\eta_n$, $n\geq k$, satisfies an LDP on the compact space $\mathbb W_k$. Using the homeomorphism $\Psi$, this LDP will then be transferred to an LDP for $\tilde \mu_{V_{k,n}}$ on $\mathcal K_{k,Y_1}$.

%\subsection{Compactness of $\mathbb W_k$}

% % % % % % % % % % % % % % % % % % % % % % % % % %
\subsection{Step 3 --  Equality in distribution of linear forms}\label{sec:equality linear forms}
% % % % % % % % % % % % % % % % % % % % % % % % % %
Our goal is to show that the map $\Psi : \mathbb W_k \to \mathcal M_1(\R^k)$ defined above is injective. As a first step we prove the following preliminary lemma, which implies injectivity in the case $k=1$.

\begin{lemma}\label{lem:linear_combinations}
Let $Y_1,Y_2,\ldots$ be non-Gaussian i.i.d.\ random variables with symmetric distribution and suppose that $\E |Y_1|^p <\infty$ for all $p\in \N$.
%with a moment generating function $\E \eee^{tY_1}$ being finite for all $t$ in a sufficiently small interval $(-\eps_0,\eps_0)$.
Let also $\xi'\sim N(0, \sigma'^2)$ and $\xi''\sim N(0,\sigma''^2)$ be Gaussian random variables independent of $Y_1,Y_2\ldots$.  Assume that for some sequences $\alpha:=(\alpha_i)_{i\in\N}\in\ell_2$ and $\beta:=(\beta_i)_{i\in\N}\in\ell_2$, we have
\begin{equation}\label{eq:infinite sum equality in distirbution}
\sum_{i\in\N}\alpha_iY_i + \xi' \stackrel{\dint}{=} \sum_{i\in\N} \beta_i Y_i + \xi''.
\end{equation}
Then, $\alpha$ and $\beta$ are equal up to a signed permutation of the entries, and $\sigma'^2=\sigma''^2$.
\end{lemma}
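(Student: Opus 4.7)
The plan is to pass to characteristic functions and compare the (formal) cumulant generating functions of the two sides of~\eqref{eq:infinite sum equality in distirbution}. Equality in distribution yields, for every $t\in\R$,
$$
\prod_{i\in\N}\phi_Y(\alpha_i t)\,e^{-\sigma'^2 t^2/2}
=
\prod_{i\in\N}\phi_Y(\beta_i t)\,e^{-\sigma''^2 t^2/2},
$$
where $\phi_Y(t):=\E e^{\ii t Y_1}$ is real-valued and even by symmetry of $Y_1$, and $C^\infty$ because all moments of $Y_1$ exist. On a neighborhood of the origin where $\phi_Y>0$ I would set $\psi:=\log\phi_Y$. Using the pointwise bound $|\psi(s)|\leq C s^2$ for $|s|$ small together with $\alpha,\beta\in\ell_2$, the sums below converge absolutely and the identity can be rewritten as
$$
\sum_{i\in\N}\psi(\alpha_i t)-\tfrac{1}{2}\sigma'^2 t^2
=
\sum_{i\in\N}\psi(\beta_i t)-\tfrac{1}{2}\sigma''^2 t^2,\qquad |t|<\delta,
$$
for some $\delta>0$.

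Next, I would extract information from this identity by comparing Taylor coefficients at $t=0$. The Taylor series of $\psi$ is the (formal) cumulant generating function of $Y_1$ and, by symmetry, has only even terms, $\psi(t)\sim\sum_{m\geq 1}\kappa_{2m}(Y_1)(\ii t)^{2m}/(2m)!$. Termwise differentiation of $\sum_i\psi(\alpha_i t)$ is justified by bounds $|\psi^{(2m)}(s)|\leq C_m$ on a smaller neighborhood of $0$ together with $\sum_i\alpha_i^{2m}<\infty$ (which follows from $\alpha\in\ell_2$ and $\alpha_i\to 0$). The coefficient of $t^2$ yields
$$
\sigma^2\|\alpha\|_2^2+\sigma'^2=\sigma^2\|\beta\|_2^2+\sigma''^2,
$$
and for $m\geq 2$ the coefficient of $t^{2m}$ gives
$\kappa_{2m}(Y_1)\sum_{i}\alpha_i^{2m}=\kappa_{2m}(Y_1)\sum_{i}\beta_i^{2m}$.

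Now I would invoke Marcinkiewicz's theorem: since $Y_1$ has all moments and is non-Gaussian, $\psi$ cannot be a polynomial, so the set $S:=\{m\geq 2:\kappa_{2m}(Y_1)\neq 0\}$ is infinite, and for every $m\in S$ the power sums agree, $p_m:=\sum_i\alpha_i^{2m}=\sum_i\beta_i^{2m}$. To turn this into equality of multisets $\{\alpha_i^2\}=\{\beta_i^2\}$ I would view the $p_m$ as moments of the finite positive measures
$\tilde\mu_\alpha:=\sum_i\alpha_i^2\,\delta_{\alpha_i^2}$ and $\tilde\mu_\beta:=\sum_i\beta_i^2\,\delta_{\beta_i^2}$ on the bounded interval $[0,M]$, with $M:=\max(\|\alpha\|_\infty^2,\|\beta\|_\infty^2)$, noting that $\int_{[0,M]} x^{m-1}\,\mathrm d\tilde\mu_\alpha=p_m$. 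Agreement of the moments on the infinite index set $\{m-1:m\in S\}$, combined with the Hausdorff moment problem and a Müntz--Szász-type density argument in $C([0,M])$, then forces $\tilde\mu_\alpha=\tilde\mu_\beta$. This yields the signed-permutation identification of $\alpha$ and $\beta$, and feeding $\|\alpha\|_2^2=\|\beta\|_2^2$ back into the first display forces $\sigma'^2=\sigma''^2$.

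The main obstacle will be the multiset-recovery step. Marcinkiewicz produces an infinite $S$ but gives no quantitative control on its lower density, while a clean application of Müntz--Szász asks for $\sum_{m\in S}1/m=\infty$. Bridging this gap is the technical heart of the argument, and it is precisely here that the standing hypotheses of symmetry and finiteness of \emph{all} moments of $Y_1$ must be used decisively (cf.\ Remarks~\ref{rem:finite_moments_assumption} and~\ref{rem:symmetry_assumption}); a weakening on either side would break the determinacy step and genuinely change the conclusion.
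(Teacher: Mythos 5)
Your proof coincides with the paper's up to the point where you have established that
$$
\sum_{i\in\N}\alpha_i^{2m}=\sum_{i\in\N}\beta_i^{2m}
$$
for infinitely many $m$ (the paper phrases this as equality of power sums $\sum_i\alpha_i^k=\sum_i\beta_i^k$ over an infinite set of exponents $k$, obtained from the nonvanishing cumulants via Marcinkiewicz's theorem and his Lemme~5 to justify termwise differentiation). From there, however, you take a genuinely different and, as written, incomplete route. You propose to encode the agreement of power sums as a moment problem for the finite measures $\tilde\mu_\alpha=\sum_i\alpha_i^2\delta_{\alpha_i^2}$ and $\tilde\mu_\beta=\sum_i\beta_i^2\delta_{\beta_i^2}$ on a compact interval, and to conclude via a Hausdorff/Müntz--Szász determinacy argument. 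You correctly identify that this requires the nonvanishing-cumulant index set $S$ to satisfy $\sum_{m\in S}1/m=\infty$, and you correctly flag that Marcinkiewicz's theorem delivers only the infinitude of $S$ with no density control. That is the genuine gap: nothing in the hypotheses (symmetry, finiteness of all moments, non-Gaussianity) gives you a lower bound on the density of $S$, and without $\sum_{m\in S}1/m=\infty$ the Müntz--Szász system $\{x^{m-1}:m\in S\}$ need not be dense in $C([0,M])$, so the step ``agreement of moments over $S$ forces $\tilde\mu_\alpha=\tilde\mu_\beta$'' does not follow. Your suggestion that the standing hypotheses ``must be used decisively'' to bridge this gap is not correct either; they are already fully consumed in producing the infinite set $S$ and in making the cumulant argument meaningful.

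The paper closes the argument by an elementary asymptotic comparison that works for \emph{any} infinite $S$, with no density hypothesis. After reducing WLOG to nonincreasing nonnegative sequences, one isolates the (finite, by $\ell_2$) set $A_1=\{i:\alpha_i=\alpha_1\}$ of indices attaining the maximal value. Dominated convergence (using $\alpha_i/\alpha_1\le 1$ and $\ell_2$-summability as a majorant) gives
$$
\sum_{i\in\N}\alpha_i^{k}\ \sim\ \alpha_1^{k}\cdot\#A_1 \qquad\text{as }k\to\infty\text{ along }S,
$$
and likewise for $\beta$. Equating the two power sums and letting $k\to\infty$ along $S$ forces $\alpha_1=\beta_1$ and $\#A_1=\#B_1$; one then subtracts the contribution of the maximal block from both sides and iterates. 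This exploits precisely the discrete, $\ell_2$ structure of the sequences that the moment-determinacy route ignores, and it is why no Müntz-type density condition is needed. You would do well to replace your Hausdorff/Müntz--Szász step by this ``compare the leading term along the sparse subsequence'' argument; once that is done, the identification of the multisets $\{\alpha_i^2\}$ and $\{\beta_i^2\}$ follows, the symmetry of $Y_1$ converts this into a signed-permutation identification of $\alpha$ and $\beta$, and matching the $t^2$-coefficients (variances) yields $\sigma'^2=\sigma''^2$ exactly as you indicate.
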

\begin{proof}
Without the terms $\xi'$ and $\xi''$ the lemma was proved by Marcinkiewicz~\cite{M1939}. The following proof is an adaptation of his method.
Since the distributions of the random variables on both sides of~\eqref{eq:infinite sum equality in distirbution} do not change upon applying to $\alpha$ and $\beta$ arbitrary signed permutations of the components, we may assume without loss of generality that $\alpha_1\geq \alpha_2\geq \ldots \geq 0$ and $\beta_1\geq \beta_2\geq \ldots \geq 0$.
Then, our aim is to show that $\alpha_i = \beta_i$ for all $i\in\N$, and $\sigma'^2= \sigma''^2$.
We prove this claim via a comparison of cumulants.
%Let $\varphi$ be the cumulant generating function of the random variable $Y_1$, i.e.,
Let $\psi$ be the logarithm of the characteristic function of $Y_1$, that is,
\[
\psi(t) = \log \E\Big[ e^{\ii t Y_1}\Big], \qquad |t|< \eps_0,
\]
%Note that $\varphi(t)$ is analytic on the strip $|\Re t|< \eps_0$. The cumulants of $Y_1$ are defined as the derivatives of $\varphi$ at $0$:
which is well-defined for $t\in (-\eps_0,\eps_0)$ with $\eps_0>0$ sufficiently small. The sequence of cumulants of $Y_1$ are defined by
$$
\kappa_k(Y_1) := \ii^{-k} \psi^{(k)}(0), \qquad k\in \N.
$$
Taking the log-characteristic functions of both sides of~\eqref{eq:infinite sum equality in distirbution} yields
$$
\sum_{i\in\N} \psi(\alpha_i t) - \frac 12 \sigma'^2 t^2
=
\sum_{i\in\N} \psi(\beta_i t)  - \frac 12 \sigma''^2 t^2.
$$
Due to the square integrability of $\alpha$ and $\beta$ and the estimate $\psi(x) =O(x^2)$, as $x\to 0$, both series are well-defined and converge uniformly on a sufficiently small interval around the origin. Moreover, using the finiteness of moments of $Y_1$, Marcinkiewicz~\cite[Lemme 5]{M1939} has shown that this equality can be differentiated term-wise any number $k\in \N$ of times leading to
\[
\kappa_k(Y_1) \sum_{i\in\N}\alpha_i^k  = \kappa_k(Y_1) \sum_{i\in\N} \beta_i^k , \qquad k\in \{3,4,\ldots\}.
\]
It is a classical result of Marcinkiewicz \cite{M1939} that the log-characteristic function of a non-Gaussian random variable cannot be a finite-degree polynomial (see also \cite[Theorem 7.3.4]{L1970} and \cite{L1958} for a generalization and an elementary proof). Since we excluded the Gaussian case,  the function $\psi$ is not a finite-degree polynomial in $t$ and thus $Y_1$ must have infinitely many non-zero cumulants. It follows that
%Assume that $k\in \{3,4,\ldots\}$ is such that $\kappa_k(Y_1)\neq 0$. Then,
\begin{equation}\label{eq:equality coefficients}
\sum_{i\in\N}\alpha_i^k  = \sum_{i\in\N} \beta_i^k \quad  \text{ for infinitely many } k\in\N.
\end{equation}
%Because, as we argued, there are infinitely many non-zero cumulants, Equation~\eqref{eq:equality coefficients} holds
It remains to argue that this implies $\alpha = \beta$. Recall that $\alpha_1\geq \alpha_2\geq \ldots \geq 0$. Assume that $\alpha_1>0$ since otherwise all $\alpha_i$ vanish and the statement becomes trivial. Consider the set $A_1:=\{i\in\N\,:\, \alpha_i = \alpha_1\}$; note that it is finite since
$\alpha\in\ell_2$ is a null-sequence.  We observe that
\[
\sum_{i\in\N} \alpha_i^k = \alpha_1^k \, \# A_1 + \sum_{i\in\N\setminus A_1} \alpha_i^k.
\]
And so, because we may send $k\to\infty$ along a subsequence (because \eqref{eq:equality coefficients} is valid for infinitely many $k\in\N$),
\[
\frac{\sum_{i\in\N} \alpha_i^k}{\alpha_1^k\,  \# A_1} = 1+ \frac{\sum_{i\in\N\setminus A_1}\alpha_i^k}{\alpha_1^k\, \# A_1} = 1+ \frac{1}{\# A_1} \sum_{i\in\N\setminus A_1}\Big(\frac{\alpha_i}{\alpha_1}\Big)^k \stackrel{k\to\infty}{\longrightarrow} 1,
\]
where the convergence in the last step follows from dominated convergence, since $\lim_{k\to\infty} (\alpha_i/\alpha_1)^k = 0$ componentwise for all $i\in \N \backslash A_1$ and $\sum_{i\in\N \backslash A_1} (\alpha_i/\alpha_1)^k \leq \sum_{i\in\N \backslash A_1} (\alpha_i/\alpha_1)^2 <\infty$ provides a summable majorant. We obtain
\[
\sum_{i\in\N} \alpha_i^k \sim \alpha_1^k \cdot \#A_1, \qquad k\to\infty.
\]
In a similar way, assume $\beta_1>0$, and define $B_1:=\{i\in\N\,:\, \beta_i = \beta_1 \}$, which is again a finite set. We get
\[
\sum_{i\in\N} \beta_i^k \sim \beta_1^k \cdot \#B_1, \qquad k\to\infty.
\]
By~\eqref{eq:equality coefficients}, this means that $\alpha_1^k\cdot \#A_1 \sim \beta_1^k\cdot \# B_1$ as $k\to\infty$,
i.e., $\lim_{k\to\infty} \frac{\alpha_1^k}{\beta_1^k} = \frac{\#B_1}{\#A_1} \not\in\{0,\infty\}$, which implies that $\alpha_1 = \beta_1$ and so $\#A_1=\#B_1$. Now, subtracting from both sides in~\eqref{eq:equality coefficients} the terms equal to $\alpha_1^k=\beta_1^k$ and repeating the procedure, we inductively obtain that $\alpha_i = \beta_i $ for all $i\in\N$. Finally, comparing the variances in~\eqref{eq:infinite sum equality in distirbution}, we obtain $\sigma'^2 = \sigma''^2$.
This completes the proof.
\end{proof}

\begin{rmk}\label{rem:finite_moments_assumption}
One may ask whether the requirement of finite moments can be removed from the statement of Lemma~\ref{lem:linear_combinations}. The answer is ``no'' (which is the reason why we require the finiteness of all moments in Theorem~\ref{thm:ldp multidimensional projection_product_measures}).  In deep of work of Linnik~\cite{linnik_linear_I_and_II}, \cite{linnik_linear_I}, \cite{linnik_linear_II} continued by Zinger~\cite{zinger1}, \cite{zinger2}, see also~\cite[Chapter~2]{kagan_linnik_rao_book}, the following question going back to Marcinkiewicz~\cite{M1939} has been investigated. Suppose that for i.i.d.\ symmetric random variables $Y_1,\ldots,Y_r$ and some vectors $a=(a_1,\ldots,a_r)$ and $b=(b_1,\ldots,b_r)$ that are not signed permutations of each other  we have
\begin{equation}\label{eq:linear_forms_equal_distr}
\sum_{k=1}^r a_k Y_k \stackrel{\dint}{=} \sum_{k=1}^r b_k Y_k.
\end{equation}
Does this imply that $Y_1$ is normal? Marcinkiewicz~\cite{M1939} proved that the answer is ``yes'' if we require \textit{all} moments of $Y_1$ to be finite. However, the finiteness of any fixed moment of  $Y_1$ is in general  not sufficient to conclude normality,  and counterexamples are given in~\cite[\S~55]{linnik_linear_I}. Moreover, a necessary and sufficient condition on $a$ and $b$ under which~\eqref{eq:linear_forms_equal_distr} implies normality of $Y_1$ has been discovered by Linnik~\cite{linnik_linear_I} under an additional assumption $\max\{a_1,\ldots,a_r\} \neq \max\{b_1,\ldots,b_r\}$ that has been subsequently removed by Zinger~\cite{zinger1}, \cite{zinger2}.
\end{rmk}

\begin{rmk}\label{rem:symmetry_assumption}
A simple modification of the above proof shows that for non-symmetric $Y_1$ the same conclusion holds, i.e.,\ $\alpha$ and $\beta$ are equal up to a \textit{signed} permutation of the entries. However, in the non-symmetric setting it is natural to ask whether a stronger conclusion holds that $\alpha$ and $\beta$ are equal up to a \textit{usual} permutation. To prove this stronger statement it would be sufficient to show that a non-symmetric random variable with finite moments has infinitely many non-zero cumulants of \textit{odd} degree. If this claim (which we are not able to prove) is true, then for non-symmetric random variables with zero mean the definition of $\mathbb W_k$ could be modified by removing the requirement of the symmetry of $\mu$, and all remaining steps of the proof of Theorem~\ref{thm:ldp multidimensional projection_product_measures} would apply.
\end{rmk}

\subsection{Step 4 --  Injectivity of the map \texorpdfstring{$\Psi$}{Psi}}\label{sec:injectivity}
The injectivity of the map $\Psi$ follows from the following multidimensional version of Lemma~\ref{lem:linear_combinations}, which can be considered a multidimensional Marcinkiewicz-type result (cf. \cite{M1939}).
\begin{lemma}\label{lem:linear_combinations_multidim}
Fix some $k\in \N$. Let $Y_1,Y_2,\ldots$ be non-Gaussian i.i.d.\ random variables having symmetric distribution  with variance $\sigma^2:= \E [Y_1^2]$ and $\E[|Y_1|^p] <\infty$ for all $p\in \N$.
%and a moment generating function $\E \eee^{tY_1}$ being finite for all $t$ in a sufficiently small interval $(-\eps_0,\eps_0)$.
Let also $\Xi'$ and $\Xi''$ be $k$-dimensional centered Gaussian random vectors independent of $Y_1,Y_2\ldots$.
If, for some
$$
\mu'  =  \sum_{j=1}^{n'} (\delta_{C_j'} + \delta_{-C_j'}) \in \mathbb W_k
\;\;
\text{ and }
\;\;
\mu'' = \sum_{j=1}^{n''} (\delta_{C_j''} + \delta_{-C_j''}) \in \mathbb W_k,
$$
we have
\begin{equation}\label{eq:infinite sum equality in distirbution_multidim}
\sum_{i=1}^{n'} C_i' Y_i +  \Xi' \stackrel{\dint}{=} \sum_{i=1}^{n''} C_i'' Y_i + \Xi'',
\end{equation}
then  $\mu'=\mu''$.
\end{lemma}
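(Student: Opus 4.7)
The plan is to reduce Lemma~\ref{lem:linear_combinations_multidim} to its one-dimensional counterpart, Lemma~\ref{lem:linear_combinations}, by projecting the identity in distribution onto well-chosen directions $v \in \R^k$, and then to reconstruct the symmetric atomic measures $\mu',\mu''$ from sufficiently many such projections. More precisely, for any $v \in \R^k$, taking inner products with $v$ on both sides of \eqref{eq:infinite sum equality in distirbution_multidim} yields the one-dimensional identity
\[
\sum_{i=1}^{n'} \langle v, C_i'\rangle\, Y_i + \langle v, \Xi'\rangle \stackrel{d}{=} \sum_{i=1}^{n''} \langle v, C_i''\rangle\, Y_i + \langle v, \Xi''\rangle.
\]
The condition $\|V(\mu')V(\mu')^*\| \le 1$ forces $\sum_i \|C_i'\|_2^2 \le k$ (since the trace of $V(\mu')V(\mu')^*$ is the sum of at most $k$ eigenvalues, each bounded by $1$), so $\sum_i |\langle v, C_i'\rangle|^2 \le k\|v\|_2^2$, and similarly for $\mu''$. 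Thus the coefficient sequences are in $\ell_2$ and Lemma~\ref{lem:linear_combinations} applies, giving that $(\langle v, C_i'\rangle)_i$ and $(\langle v, C_i''\rangle)_i$ agree up to a signed permutation; in particular, for each $t \in \R\setminus\{0\}$, the combined count of entries equal to $\pm t$ is the same in both sequences.

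To upgrade this projection-wise information to $\mu' = \mu''$, I would argue by contradiction. Suppose $\mu'(\{c^*\}) \neq \mu''(\{c^*\})$ for some $c^* \in [-1,1]^k\setminus\{0\}$. The set $S := \mathrm{supp}(\mu') \cup \mathrm{supp}(\mu'') \cup \{\pm c^*\}$ is countable, since $\mu',\mu''$ are locally finite on $[-1,1]^k\setminus\{0\}$. Consider the countable collection of hyperplanes
\[
\{v : \langle v, s\rangle = 0\},\ s \in S, \quad \text{and} \quad \{v : \langle v, s - s'\rangle = 0\},\ \{v : \langle v, s + s'\rangle = 0\},\ s,s' \in S,\ s \notin \{\pm s'\}.
\]
Their union has Lebesgue measure zero, so I can pick $v$ in the complement. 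For such $v$, $\langle v, s\rangle \neq 0$ for all $s \in S$ and $\langle v, s\rangle \neq \pm \langle v, s'\rangle$ for distinct $\pm$-pairs in $S$.

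Setting $t_0 := \langle v, c^*\rangle \neq 0$, the separation property of $v$ implies that the only atoms of $\mu'$ whose projection along $v$ equals $\pm t_0$ are $c^*$ and $-c^*$. Using the representation $\mu' = \sum_j (\delta_{C_j'} + \delta_{-C_j'})$ and the symmetry $\mu'(\{c^*\}) = \mu'(\{-c^*\})$, this yields
\[
\#\{i : \langle v, C_i'\rangle \in \{\pm t_0\}\} = \mu'(\{c^*\}),
\]
and the analogous identity for $\mu''$. By the first paragraph, these counts coincide, forcing $\mu'(\{c^*\}) = \mu''(\{c^*\})$, a contradiction. Hence $\mu' = \mu''$.

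The main substance of the proof lies in Lemma~\ref{lem:linear_combinations}, where finite moments and the cumulant comparison are used; the multidimensional extension proposed here is essentially a bookkeeping argument. The only subtle point is the genericity of $v$, which rests entirely on the countability of $\mathrm{supp}(\mu') \cup \mathrm{supp}(\mu'')$ — itself a direct consequence of the local finiteness and integrality conditions ($\mathbb{W}2$) and ($\mathbb{W}3$) built into the definition of $\mathbb{W}_k$.
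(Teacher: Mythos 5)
Your proof is correct and takes a genuinely different route from the paper's. The paper argues by induction on $k$: it classifies columns into ``good'' (nonzero in the first $k-1$ coordinates) and ``bad'' (zero there but nonzero in the $k$-th), applies the $\mathbb W_{k-1}$ statement to the first $k-1$ rows to align the good columns up to a signed permutation, and then, for each equivalence class of columns sharing the same $\R^{k-1}$-component $w$, constructs a perturbed linear functional $L(v,x)=\langle u,v\rangle+\eps x$ that isolates precisely that batch and feeds it back into the one-dimensional Lemma~\ref{lem:linear_combinations}; the bad columns are treated at the end via the last coordinate. Your argument dispenses with the induction and the good/bad bookkeeping: you fix a putative discrepancy $c^*$, pick a single generic $v$ outside a countable union of hyperplanes (which separates \emph{every} $\pm$-pair in $\mathrm{supp}(\mu')\cup\mathrm{supp}(\mu'')\cup\{\pm c^*\}$), project, and read off the multiplicity of $c^*$ directly from the number of entries equal to $\pm\langle v,c^*\rangle$ in the projected sequence. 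The countable-union-of-hyperplanes device appears in both proofs, but the paper uses it locally (to separate one batch of columns at a time within the induction), whereas you use it globally; combined with the point-process viewpoint on $\mathbb W_k$ — counting atoms at $\pm c^*$ rather than tracking column identifications — this collapses the argument to a single application of Lemma~\ref{lem:linear_combinations} per atom and makes the proof shorter and cleaner. The minor details all check out: $\ell_2$-summability of the projected coefficients follows from $\mathrm{Tr}(V(\mu')V(\mu')^*)\le k$; finiteness of the relevant atom counts follows from local finiteness; and the projections of $\Xi',\Xi''$ remain centered Gaussian and independent of the $Y_i$, so Lemma~\ref{lem:linear_combinations} indeed applies.
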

\begin{proof}
We need to show that the $k\times\infty$-matrices $V':= V(\mu')$ and $V'':= V(\mu'')$ corresponding to $\mu'$ and $\mu''$ are equal up to a signed permutation of columns.
We already proved the claim for $k=1$ in Lemma~\ref{lem:linear_combinations}. For arbitrary $k\in \N$, Lemma~\ref{lem:linear_combinations} implies that any row of $V'$ differs from a corresponding row of $V''$ by a signed permutation only. However, these permutations may be different for different rows, which is why an additional, more elaborate argument is needed.

Let us prove the lemma for arbitrary $k\in \N$ by induction. Assume that we proved the claim  for elements of $\mathbb W_{k-1}$. Our aim is to prove it for any $\mu',\mu''\in \mathbb W_k$. We are going to apply the induction assumption to the first $k-1$ rows of $V'$ and $V''$.  Let us take some column of $V'$ and write it in the form $(v,x)\in \R^k$ with $v\in \R^{k-1}$ and $x\in \R$. We call this column ``good'' if $v\neq 0$. A column is called ``bad'' if $v=0$ but $x\neq 0$. Otherwise, a column equals $0$.
Let us first remove all bad and zero columns and restrict our attention to good columns only.
Applying the induction assumption to the first $(k-1)$ rows of $V'$ and $V''$ we may assume that, after an appropriate signed permutation of the columns of $V'$ and $V''$, the first $k-1$ rows of the good parts of the matrices coincide. More precisely, the good columns of $V'$ can be written as $C_i' = C_i(V') = (w_i, x_i') \in \R^k$ and the good columns of $V''$ as $C_i'' = C_i(V'') = (w_i, x_i'')\in \R^k$, where $w_i\in \R^{k-1}\backslash\{0\}$ and $x_i',x_i''\in \R$.  There may be repetitions among the vectors $\pm w_1,\pm w_2,\ldots$. Let us take one of these vectors, call it $w$, observe that it is not $0$ (because we removed the bad columns), and consider all other vectors of the form $w_i$ that are equal to $\pm w$.  By square summability of the rows, the number of such vectors is finite. Applying the same signed permutation to the columns of $V'$ and $V''$, we may and will assume that $w := w_1=\ldots = w_\ell \neq 0$ and $w_j \neq \pm w$ for all $j>\ell$. Our aim is to prove that $(x_1',\ldots, x_\ell')$ is an unsigned permutation of $(x_1'',\ldots, x_\ell'')$. The idea is to apply a linear functional to both sides of~\eqref{eq:infinite sum equality in distirbution_multidim}, which is chosen such that the images of the first $\ell$ columns are separated from the images of the other columns. Then we shall apply the one-dimensional Lemma~\ref{lem:linear_combinations} to these functionals to conclude that the values of these functionals are signed permutations of each other, which, by the separation property, implies the claim.  Let $u\in \R^{k-1}$ be some vector satisfying $\langle u, w\rangle \neq \langle u, w_j\rangle$ and $\langle u, w\rangle \neq -\langle u, w_j\rangle$ for all $j>\ell$, as well as $\langle u, w\rangle > 0$. To prove that such a vector exists, note that the set of all $u\in \R^{k-1}$ for which one of the identities $\langle u, w\rangle = \pm \langle u, w_j\rangle$ or $\langle u, w\rangle = 0$ holds true is a union of countably many linear hyperplanes, and therefore cannot be equal to $\R^{k-1}$  because the hyperplanes are Lebesgue zero sets. We can take $u$ to be any vector outside this countable union. Replacing $u$ by $-u$, if necessary, we can assure the condition $\langle u, w\rangle > 0$. Since $w_j\to 0$ and hence also $\langle u, w_j\rangle \to 0$ as $j\to\infty$, we can even find a sufficiently small $r>0$ such that $\langle u, w\rangle > 2r$ and, additionally,
$$
\langle u, w_j \rangle \notin (\langle u, w\rangle - 2r, \langle u, w\rangle + 2r)
\;\;
\text{ and }
\;\;
-\langle u, w_j \rangle \notin (\langle u, w\rangle - 2r, \langle u, w\rangle + 2r)
\;\;
\text{ for all }
\;\;
j>\ell.
$$
Since both $x_j'$ and $x_j''$ converge to $0$ as $j\to\infty$ by square summability, we can find a sufficiently small $\eps>0$ such that
$$
\langle u,w \rangle + \eps x_m' \in (\langle u, w\rangle - r, \langle u, w\rangle + r)
\;\;
\text{ and }
\langle u,w \rangle + \eps x_m'' \in (\langle u, w\rangle - r, \langle u, w\rangle + r)
\;\;
\text{ for all }
\;\;
m\in \{1,\ldots, \ell\}
$$
and at the same time
$$
\pm (\langle u, w_j \rangle + \eps x_j') \notin (\langle u, w\rangle - r, \langle u, w\rangle + r)
\;\;
\text{ and }
\;\;
\pm (\langle u, w_j \rangle + \eps x_j'') \notin (\langle u, w\rangle - r, \langle u, w\rangle + r)
\;\;
\text{ for all }
\;\;
j>\ell.
$$
Now, let us consider the linear functional $L:\R^k \to\R$ defined by $L(v, x) = \langle u, v\rangle + \eps x$ for $v\in \R^{k-1}$ and $x\in \R$. Applying this functional to both sides of the distributional equality~\eqref{eq:infinite sum equality in distirbution_multidim}, we obtain
$$
\sum_{i=1}^\infty (\langle u, w_i\rangle  + \eps x_i') Y_i+\xi' \stackrel{\dint}{=} \sum_{i=1}^\infty (\langle u, w_i\rangle  + \eps x_i'') Y_i +\xi''
$$
for some centered Gaussian random variables $\xi'$ and $\xi''$ independent of $Y_1,Y_2,\ldots$. Applying to this identity Lemma~\ref{lem:linear_combinations}, we conclude  that the vectors
$$
(\langle u, w_i\rangle  + \eps x_i')_{i\in \N}
\;\;
\text{ and }
\;\;
(\langle u, w_i\rangle  + \eps x_i'')_{i\in \N}
$$
are signed permutations of each other. However, by the above construction, the first $\ell$ entries of these vectors belong to the interval $(\langle u, w\rangle - r, \langle u, w\rangle + r)\subset (0,\infty)$, whereas all other entries (as well as their negatives) are located outside this interval. It follows that the first $\ell$ entries of these vectors are equal up to an unsigned permutation. Consequently, $(x_1',\ldots,x_\ell')$ and $(x_1'',\ldots, x_\ell'')$ are equal up to an unsigned permutation. This means that the first $\ell$ columns of $V'$ are equal to the first $\ell$ columns of $V''$.  Since these considerations hold for every $w\in \R^{k-1}\backslash\{0\}$, the above argument proves that the good columns of $V'$ become equal to the good columns  of $V''$ after applying a suitable signed permutation.

Let us now take into consideration the bad columns, too.
%Assume first that the number of non-zero columns in $V'$ is finite.
The above argument shows\footnote{Note that the presence of bad columns does not influence the validity of the argument because we can choose $\eps\in (0,r)$ which ensures that $|\eps y|< r$ for every bad column $(0,y)$ since $|y|\leq 1$.} that the good columns of both matrices coincide, after a signed permutation.   That is,  we can write the good columns of $V'$ and $V''$ as $(w_j, x_j)$, $j\in J$, for some index set $J\subset \N$. Besides, $V'$ may have bad columns which are denoted by $(0, y_i')$, $i\in I'$, with $y_i'\neq 0$. Similarly, the bad columns of $V''$ are denoted by $(0, y_i'')$, $i\in I''$, with $y_i''\neq 0$. Lemma~\ref{lem:linear_combinations} applied to the $k$-th coordinate in~\eqref{eq:infinite sum equality in distirbution_multidim} yields the following statement: for every $c\in\R$, the number of occurrences of $\pm c$ in the last row of $V'$ is the same as in the last row of $V''$.  If $c\neq 0$, then the number of occurrences of $\pm c$ in the entries belonging to the good columns is finite  (by square summability) and this number is the same for $V'$ and $V''$. Since the good columns of $V'$ and $V''$ coincide, the number of occurrences of $\pm c$ in the \textit{bad} columns in the last row of $V'$ and $V''$ is the same. That is, the number of times  a bad column of the form  $(0,\pm c)$ appears in $V'$ is the same as for $V''$. This holds for every $c\in \R\backslash\{0\}$. After applying a suitable signed permutation, the good and the bad columns of $V'$ become equal to the good and the bad columns of $V''$. The remaining columns, if there are any,  are $0$.  Hence, $\mu' = \mu''$.
\end{proof}

As an immediate consequence of the above lemma we record the following result.

\begin{cor}\label{cor:nu_injective}
The map $\Psi : \mathbb W_k \to \mathcal M_1(\R^k)$ is injective.
\end{cor}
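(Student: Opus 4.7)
The plan is to deduce injectivity of $\Psi$ as an immediate consequence of Lemma~\ref{lem:linear_combinations_multidim}. Suppose we are given two measures $\mu',\mu''\in \mathbb W_k$ with $\Psi(\mu')=\Psi(\mu'')$. Writing these as $\mu' = \sum_{j=1}^{n'}(\delta_{C_j'}+\delta_{-C_j'})$ and $\mu'' = \sum_{j=1}^{n''}(\delta_{C_j''}+\delta_{-C_j''})$, with associated matrices $V(\mu'),V(\mu'')$, the definition of $\Psi$ translates the identity $\Psi(\mu')=\Psi(\mu'')$ into the distributional equality
\[
\sum_{j=1}^{n'} C_j' Y_j + \Xi' \stackrel{\dint}{=} \sum_{j=1}^{n''} C_j'' Y_j + \Xi'',
\]
where $\Xi' := \sigma\bigl(\id_{k\times k}-V(\mu')V(\mu')^*\bigr)^{1/2} N_k$ and $\Xi'' := \sigma\bigl(\id_{k\times k}-V(\mu'')V(\mu'')^*\bigr)^{1/2} N_k$. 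By construction, both $\Xi'$ and $\Xi''$ are centered Gaussian random vectors in $\R^k$ which, by the independence built into the definition of $\Psi$, are independent of $Y_1,Y_2,\ldots$

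With this identification, the assumptions of Lemma~\ref{lem:linear_combinations_multidim} are in force: $Y_1,Y_2,\ldots$ are i.i.d., non-Gaussian, symmetric with all moments finite (these are the standing hypotheses carried over from Theorem~\ref{thm:ldp multidimensional projection_product_measures}), and we have produced the required distributional identity. Applying the lemma directly yields $\mu'=\mu''$, which is exactly the injectivity of $\Psi$. Thus no additional work beyond unwrapping the definition of $\Psi$ is needed.

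There is no substantive obstacle here since all of the real content is packaged in Lemma~\ref{lem:linear_combinations_multidim}; the only thing to be careful about is to verify that the ``Gaussian correction'' terms $\Xi'$ and $\Xi''$ indeed satisfy the hypotheses of that lemma (centered, Gaussian, independent of the $Y_i$), which is immediate from the construction of $\Psi$. Note in particular that we do not need to check that the covariance matrices of $\Xi'$ and $\Xi''$ coincide; Lemma~\ref{lem:linear_combinations_multidim} concludes $\mu'=\mu''$ without such an assumption (and, as a by-product, this equality forces $V(\mu')V(\mu')^*=V(\mu'')V(\mu'')^*$, so the Gaussian parts then automatically match).
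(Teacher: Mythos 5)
Your proof is correct and is exactly the argument the paper has in mind when it introduces Corollary~\ref{cor:nu_injective} with the phrase ``as an immediate consequence of the above lemma'': unwrap the definition of $\Psi$ to obtain the distributional identity, check that the Gaussian terms satisfy the hypotheses of Lemma~\ref{lem:linear_combinations_multidim}, and apply that lemma. Your closing remark that no matching of the Gaussian covariances needs to be verified in advance is a correct and useful clarification of why the application is genuinely immediate.
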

\subsection{Step 5 -- Vague convergence in \texorpdfstring{$\mathbb W_k$}{W\_k} and weak convergence in \texorpdfstring{$\mathcal M_1(\R^k)$}{M\_1(Rk)}}
% % % % % % % % % % % % % % % % % % % % % % % % % %
%Recall that for $k\in\N$,
%\[
%W_k = \Bigg\{ \mu:=\sum_{j=1}^{N}(\delta_{C_j} + \delta_{-C_j})\,:\, N\in\N\cup\{0,\infty\}, \mu \text{ symmetric locally finite on %} [-1,1]^k\setminus\{0\},\, (C_j)_{j\in\N}=V \in \widetilde{W}_k  \Bigg\},
%\]
%where the space $\widetilde{W}_k$ of real $k\times \infty$ matrices is given by
%\[
%\widetilde{W}_k = \Big\{ V=(V(i,j))_{i=1,j=1}^{k,\infty}\in \R^{k\times \infty}\,:\,\|(V(i,j))_{j\in\N}\|_2 <\infty \,\forall %i\in\{1,\dots,k\}, \, \|VV^*\|\leq 1\Big\}
%\]
%and endowed with the topology of pointwise convergence.
%We will now show that vague convergence in the space $W_k$ of point processes and weak convergence of the corresponding laws as %defined in \eqref{eq:rep_law_sum_p_gauss_product_meas} are equivalent.

The next lemma states that $\Psi:\mathbb W_k \to \mathcal M_1(\R^k)$ is a homeomorphism onto its image $\mathcal K_{k, Y_1}:= \Psi(\mathbb W_k)$, which is a compact subset of $\mathcal M_1(\R^k)$.

\begin{lemma}\label{lem:vague vs weak convergence}
Let $k\in\N$ be fixed.
Assume that $Y_1,Y_2,\ldots$ are non-Gaussian i.i.d.\ random variables having symmetric distribution  with variance $\sigma^2:= \E [Y_1^2]$ and $\E[|Y_1|^p] <\infty$ for all $p\in \N$. Recall that the map $\Psi: \mathbb W_k \to \mathcal M_1(\R^k)$ is defined as follows: for $\mu=\sum_{j=1}^N (\delta_{C_j}+\delta_{-C_j})\in \mathbb W_k$, we put
\[
\Psi(\mu) := \text{Law}\,\Big(\sum_{j=1}^N C_jY_j + \sigma \left(\id_{k\times k} - V(\mu)V(\mu)^*\right)^{1/2} N_k\Big),
\]
where $N_k$ is a $k$-dimensional standard Gaussian random vector independent of the sequence $(Y_i)_{i\in\N}$.
Then a sequence $\mu_1,\mu_2,\ldots \in \mathbb W_k$ converges vaguely to $\mu\in \mathbb W_k$ if and only if $\Psi(\mu_n)$ converges weakly to $\Psi(\mu)$ in $\mathcal M_1(\R^k)$, as $n\to\infty$.
\end{lemma}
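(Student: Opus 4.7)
The plan is to pass to characteristic functions and apply Lévy's continuity theorem. The substantive direction is the forward one; the converse will follow from it by combining the compactness of $\mathbb W_k$ (Lemma~\ref{lem:W_k_compact}) with the injectivity of $\Psi$ (Corollary~\ref{cor:nu_injective}). Writing $\mu_n = \sum_j (\delta_{C_j^{(n)}} + \delta_{-C_j^{(n)}})$ and $\mu = \sum_j (\delta_{C_j} + \delta_{-C_j})$, setting $V_n := V(\mu_n)$, $V := V(\mu)$, and using the symmetry of $Y_1$, one has
\[
\phi_{\Psi(\mu_n)}(t) = \prod_j \phi_{Y_1}(\langle t, C_j^{(n)}\rangle)\,\exp\!\Big(-\tfrac{\sigma^2}{2}\langle t,(\id_{k\times k}-V_nV_n^*)t\rangle\Big),\qquad t\in\R^k,
\]
and analogously for $\Psi(\mu)$. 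The infinite product converges absolutely since $|\log \phi_{Y_1}(x)| \leq c_t x^2$ near $0$ (by finiteness of the variance) and $\sum_j \|C_j^{(n)}\|_2^2 = \Tr(V_nV_n^*) \leq k$. The key obstruction is that vague convergence controls only atoms bounded away from $0$, so mass can escape to the origin; the main idea is to absorb that lost mass into the Gaussian term.

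To implement this, fix $\eps > 0$ in the cocountable set of values not equal to the norm of any atom of $\mu$. Vague convergence on the compactum $\{\|x\|_2 \geq \eps\}$ then yields, for $n$ large, a labelling of the $\geq\eps$-atoms of $\mu_n$ as $\pm \tilde C_1^{(n)},\ldots,\pm\tilde C_{M_\eps}^{(n)}$ with $\tilde C_i^{(n)} \to \pm\tilde C_i$ for some choice of signs, where $\pm\tilde C_i$ are the corresponding atoms of $\mu$. In particular $S_n^{\geq\eps} := \sum_{\|C_j^{(n)}\|_2 \geq \eps} C_j^{(n)}(C_j^{(n)})^* \to \sum_{\|\tilde C_i\|_2 \geq \eps} \tilde C_i \tilde C_i^*$, and the finite ``large-atom'' product converges by continuity and evenness of $\phi_{Y_1}$. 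For the small atoms, the Taylor expansion $\phi_{Y_1}(x) = 1 - \tfrac{\sigma^2}{2} x^2 + o(x^2)$ together with the bound $|\langle t, C_j^{(n)}\rangle| \leq \|t\|_2\eps$ yields
\[
\sum_{\|C_j^{(n)}\|_2 < \eps} \log \phi_{Y_1}(\langle t, C_j^{(n)}\rangle) = -\tfrac{\sigma^2}{2} \langle t, S_n^{<\eps} t\rangle + r_n(\eps),
\]
with $|r_n(\eps)| \leq \omega_t(\eps) \|t\|_2^2\, k$ and $\omega_t(\eps) \to 0$ as $\eps \to 0$, uniformly in $n$. Combining this with the explicit Gaussian covariance $\sigma^2(\id_{k\times k} - V_nV_n^*)$ and using the trace identity $S_n^{<\eps} = V_nV_n^* - S_n^{\geq\eps}$, the total Gaussian contribution collapses to $\sigma^2(\id_{k\times k} - S_n^{\geq\eps})$, which converges to $\sigma^2(\id_{k\times k} - \sum_{\|\tilde C_i\|_2 \geq \eps} \tilde C_i \tilde C_i^*)$. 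Applying the same decomposition to $\phi_{\Psi(\mu)}(t)$ and sending $\eps \to 0$ along an admissible sequence (disjoint from the countable set of exceptional norm-values) yields $\phi_{\Psi(\mu_n)}(t) \to \phi_{\Psi(\mu)}(t)$, and Lévy's continuity theorem produces the weak convergence.

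For the reverse direction, suppose $\Psi(\mu_n) \to \Psi(\mu)$ weakly. By the metrizability of the vague topology and Lemma~\ref{lem:W_k_compact}, any subsequence of $(\mu_n)$ has a further vaguely convergent subsequence with limit some $\mu^* \in \mathbb W_k$; the forward direction forces $\Psi(\mu^*) = \Psi(\mu)$, and Corollary~\ref{cor:nu_injective} then yields $\mu^* = \mu$. Since every subsequence contains a further subsequence converging to $\mu$, the whole sequence converges vaguely to $\mu$. The main difficulty throughout is the escape of mass to the origin in the forward direction, which is handled precisely by the cancellation $S_n^{<\eps} + (\id_{k\times k} - V_nV_n^*) = \id_{k\times k} - S_n^{\geq\eps}$ between the small-atom Lindeberg-type contribution and the explicit Gaussian correction.
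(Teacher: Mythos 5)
Your argument is correct and follows the same route as the paper's proof: pass to characteristic functions, use vague convergence together with Renesse's characterization to control the finitely many atoms of norm at least $\eps$, use a second-order Taylor expansion of $\log\varphi_{Y_1}$ for the sub-$\eps$ atoms, and exploit the cancellation $S_n^{<\eps}+(\id_{k\times k}-V_nV_n^*)=\id_{k\times k}-S_n^{\geq\eps}$ before sending $\eps\to 0$. The reverse direction via subsequence compactness plus injectivity of $\Psi$ is likewise the paper's ``continuous bijection from a compact space to a Hausdorff space is a homeomorphism'' argument, just phrased at the level of sequences.
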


\begin{proof}
Assume $\mu_n \to \mu$ vaguely in $\mathbb W_k$ as $n\to\infty$. We prove the weak convergence of $\Psi(\mu_n)$ to $\Psi(\mu)$ via characteristic functions. Let us denote by $\varphi_Y(s) = \E[e^{isY_1}]$ the characteristic function of $Y_1$. Recall that the characteristic function of $N_k$ is given by $\varphi_{N_k}(t) = e^{-\frac 12 \langle t,t\rangle }$ and observe that if $M$ is a real symmetric $k\times k$ matrix, then the characteristic function of $MN_k$ is given by
\[
\varphi_{N_k}(M^*t) = e^{-\frac{1}{2}\langle M^*t,M^* t \rangle}=e^{-\frac{1}{2}\langle t,MM^* t \rangle} = e^{-\frac{1}{2}\langle t,M^2 t \rangle}.
\]
The characteristic function $\varphi$ of $\Psi(\mu)$ is, for any $t\in\R^k$, given by
\begin{align*}
\varphi(t) & = \int_{\R^k} e^{i \langle t,y  \rangle } \,\Psi(\mu)(\dint y) \cr
& = \E\Bigg[e^{i\big\langle t, \sum_{j=1}^N C_jY_j + \sigma(\id_{k\times k} - V(\mu)V(\mu)^*)^{1/2}N_k  \big\rangle} \Bigg] \cr
& = \E\Big[e^{i\langle t, \sigma(\id_{k\times k}-V(\mu)V(\mu)^*)^{1/2}N_k \rangle}\Big] \prod_{j=1}^N \E\Big[e^{i\langle t,C_jY_j \rangle}\Big] \cr
& = e^{-\frac{1}{2}\langle t, \sigma^2(\id_{k\times k}-V(\mu)V(\mu)^*)t \rangle}  \prod_{j=1}^N \varphi_Y\big(\langle t,C_j\rangle\big).
\end{align*}
Note that the previous arguments are clear for $N<\infty$ and if $N=\infty$, then the step from second to third line holds, because the series $\sum_{j=1}^N C_jY_j$ converges almost surely (and thus weakly) by the $L_2$-version of the bounded martingale convergence theorem (see, e.g., \cite[Theorem 11.10]{Klenke2014}) as all rows of the matrix $V(\mu)=(C_j)_{j=1}^N$ belong to $\ell_2$ and $Y_1$ has variance $\sigma^2<\infty$. More precisely, since $Y_1,Y_2,\dots$ are independent and centered, for any $1\leq i \leq k$,
\[
 \sup_{m\in\N} \E\Big[\Big| \sum_{j=1}^mY_jC_j(i) \Big|^2\Big]
   = \sup_{m\in\N} \Var\Big[\sum_{j=1}^mY_jC_j(i)\Big] = \sigma^2 \sum_{j=1}^\infty |C_j(i)|^2  < \infty,
\]
and so we can apply L\'evy's continuity theorem to obtain the desired equality. Similarly, for any $n\in\N$, the characteristic function $\varphi_n$ of $\Psi(\mu_n)$ is given by
\[
\varphi_n(t) = e^{-\frac{1}{2}\langle t, \sigma^2(\id_{k\times k}-V(\mu_n)V(\mu_n)^*)t \rangle}  \prod_{j=1}^{K_n} \varphi_Y\big(\langle t,C_j^{(n)}\rangle\big),
\]
where $V(\mu_n)$ is the matrix with columns $C_j^{(n)}\in\R^k$ with
\[
\mu_n = \sum_{j=1}^{  K_n  }\left(\delta_{C_j^{(n)}} + \delta_{-C_j^{(n)}}\right),\qquad K_n \in\N\cup\{0,\infty\}.
\]
We shall now show that, as $n\to\infty$, $\varphi_n(t)\to\varphi(t)$ for any $t\in\R^k$. Fix $t\in\R^k$ and let $\varepsilon>0$ be such that our measure $\mu=\sum_{j=1}^N (\delta_{C_j}+\delta_{-C_j})\in \mathbb W_k$ satisfies
\[
\mu\big(\partial \B_2^k(0,\varepsilon)\big) = 0,
\]
i.e., no point $C_j\in\R^k$, $j\in\N$, lies on the Euclidean sphere of radius $\varepsilon$ around zero. Note that because the rows of our matrix $V(\mu)=(C_j)_{j\in\N}$ belong to $\ell_2$, there are at most finitely many points outside of the ball $\B_2^k(0,\varepsilon)$ (otherwise we would find a row which is not in $\ell_2$), i.e., we may assume that for some $m=m(\varepsilon)<\infty$
\[
\pm C_1,\dots, \pm C_m \notin \B_2^k(0,\varepsilon)
\]
and, for all $j>m$, $\pm C_j \in \B_2^k(0,\varepsilon)$. We now apply \cite[Proposition 3.13, p. 144]{R2008} with $K=\B_2^k(0,\varepsilon)$ there, which yields a formulation of vague convergence in terms of convergence of the points that define the respective point measures. We obtain the following: for all $n\geq N(\varepsilon) \in\N$, the number of points $\pm C_j^{(n)}$, $j\in\N$, defining the point measure $\mu_n$, which lie outside of $\B_2^k(0,\varepsilon)$ is also equal to $2m$. Moreover, we may relabel those points as $\pm C_1^{(n)},\dots,\pm C_m^{(n)}$ such that, for all $j\in\{1,\dots,m\}$, we have the pointwise convergence $\pm C_j^{(n)} \to \pm C_j$ in $\R^k$ as $n\to\infty$. This implies that
\begin{equation}\label{eq:product 1 to m}
\prod_{j=1}^m \varphi_Y\big(\langle t,C_j^{(n)}\rangle\big) \stackrel{n\to\infty}{\longrightarrow} \prod_{j=1}^m \varphi_Y\big(\langle t,C_j\rangle\big).
\end{equation}
Since
\[
\varphi(t) = e^{-\frac{1}{2}\langle t, \sigma^2(\id_{k\times k}-V(\mu)V(\mu)^*)t \rangle}  \prod_{j=1}^m \varphi_Y\big(\langle t,C_j\rangle\big)\prod_{j=m+1}^N \varphi_Y\big(\langle t,C_j\rangle\big)
\]
and
\[
\varphi_n(t) = e^{-\frac{1}{2}\langle t, \sigma^2(\id_{k\times k}-V(\mu_n)V(\mu_n)^*)t \rangle}  \prod_{j=1}^m \varphi_Y\big(\langle t,C_j^{(n)}\rangle\big)\prod_{j=m+1}^{ K_n} \varphi_Y\big(\langle t,C_j^{(n)}\rangle\big)
\]
it is left to establish the convergence of the remaining parts defining $\varphi_n$. Since $\E[|Y_1|^p] <\infty$ for all $p\in \N$, the characteristic function $\varphi_Y$ of $Y_1$ is also $p$-times differentiable for all $p\in\N$. Since the random variable $Y_1$ is symmetric (and thus centered), we have
\[
\varphi_Y(s) = \sum_{k=0}^2\frac{\varphi_Y^{(k)}(0)}{k!} s^k + R_3(s) = 1-\sigma^2\frac{ s^2}{2} + O(s^4)
\]
for $s\to 0$. Therefore, as $s\to 0$,
\[
\log \varphi_Y(s) = -\sigma^2\frac{s^2}{2} +O(s^4).
\]
This means that for every $n\geq N(\eps)$,
\begin{align*}
\log \prod_{j=m+1}^{K_n} \varphi_Y\big(\langle t,C_j^{(n)}\rangle\big) & = \sum_{j=m+1}^{K_n} \log \varphi_Y\big(\langle t,C_j^{(n)}\rangle\big)
= \sum_{j=m+1}^{K_n} \left(-\frac{\sigma^2}{2}\langle t,C_j^{(n)}\rangle^2 +O\Big(\langle t,C_j^{(n)}\rangle^4\Big)\right).
\end{align*}
Note that it follows from the Cauchy-Schwarz inequality and the fact that the rows of the matrix $V(\mu_n)=(C_j^{(n)})_{j=1}^{\infty}$ are square summable (which implies $\|C_j^{(n)}\|_2\to 0$ as $j\to\infty$) that
\[
\langle t,C_j^{(n)}\rangle^4 \leq \|t\|_2^4\|C_j^{(n)}\|_2^4 \stackrel{j\to\infty}{\longrightarrow} 0
\]
as $j\to\infty$, and so the constant implicit in the $O$-terms is uniform, because we may work with an $\varepsilon>0$ above such that we only consider $j\geq m$ with $m=m(\varepsilon)$ sufficiently large. Thus, we have
\begin{align*}
\log \prod_{j=m+1}^{K_n} \varphi_Y\big(\langle t,C_j^{(n)}\rangle\big)
& =
-\frac{\sigma^2}{2}\Bigg(\sum_{j=m+1}^{K_n} \langle t,C_j^{(n)}\rangle^2\Bigg) +O\Bigg(\sum_{j=m+1}^{K_n}\|C_j^{(n)}\|_2^4\Bigg) \cr
& =
-\frac{\sigma^2}{2}\Bigg(\sum_{j=1}^{K_n} \langle t,C_j^{(n)}\rangle^2 - \sum_{j=1}^m \langle t,C_j^{(n)}\rangle^2\Bigg) +O\Bigg(\sum_{j=m+1}^{K_n}\|C_j^{(n)}\|_2^4\Bigg) \cr
&=
-\frac{\sigma^2}{2} \langle V(\mu_n)V(\mu_n)^*t,t\rangle +\frac{\sigma^2}{2}\sum_{j=1}^m \langle t,C_j^{(n)}\rangle^2 +O\Bigg(\sum_{j=m+1}^{K_n}\|C_j^{(n)}\|_2^4\Bigg),
\end{align*}
where we used that $\sum_{j=1}^{K_n}\langle t,C_j^{(n)}\rangle^2 = \langle V(\mu_n)V(\mu_n)^*t,t\rangle$. Therefore, we obtain
\begin{align*}
&\log \Bigg(e^{-\frac{1}{2}\langle t, \sigma^2(\id_{k\times k}-V(\mu_n)V(\mu_n)^*)t \rangle}  \prod_{j=m+1}^{K_n} \varphi_Y\big(\langle t,C_j^{(n)}\rangle\big)\Bigg)\\
& = \log \Bigg(e^{-\frac{1}{2}\langle t, \sigma^2(\id_{k\times k}-V(\mu_n)V(\mu_n)^*)t \rangle}\Bigg)\\
&\quad -\frac{\sigma^2}{2} \langle V(\mu_n)V(\mu_n)^*t,t\rangle +\frac{\sigma^2}{2}\sum_{j=1}^m \langle t,C_j^{(n)}\rangle^2 +O\Bigg(\sum_{j=m+1}^{K_n}\|C_j^{(n)}\|_2^4\Bigg) \cr
& = -\frac{\sigma^2}{2}\langle t,t \rangle +\frac{\sigma^2}{2}\sum_{j=1}^m \langle t,C_j^{(n)}\rangle^2 +O\Bigg(\sum_{j=m+1}^{K_n}\|C_j^{(n)}\|_2^4\Bigg).
\end{align*}
A similar computation shows that
\begin{align*}
\log\Bigg(e^{-\frac{1}{2}\langle t, \sigma^2(\id_{k\times k}-V(\mu)V(\mu)^*)t \rangle}  \prod_{j=m+1}^N \varphi_Y\big(\langle t,C_j\rangle\big)\Bigg)
& = -\frac{\sigma^2}{2}\langle t,t \rangle +\frac{\sigma^2}{2}\sum_{j=1}^m \langle t,C_j\rangle^2 +O\Bigg(\sum_{j=m+1}^N\|C_j\|_2^4\Bigg).
\end{align*}
Now we deal with the error terms. For some absolute constant $c\in(0,\infty)$, because for all $j>m$, $\pm C_j \in \B_2^k(0,\varepsilon)$, we have
\[
O\Bigg(\sum_{j=m+1}^N\|C_j\|_2^4\Bigg) \leq c \sum_{j=m+1}^N\|C_j\|_2^4 = c \sum_{j=m+1}^N\|C_j\|_2^2 \|C_j\|_2^2 \leq c \sum_{j=m+1}^N\varepsilon^2\|C_j\|_2^2 \leq c\varepsilon^2 k,
\]
where in the last step we used that the rows of $V(\mu)=(C_j)_{j=1}^N$ have $\ell_2$-norm at most $1$. Similarly, and uniformly in $n$,
\[
O\Bigg(\sum_{j=m+1}^{K_n}\|C_j^{(n)}\|_2^4\Bigg) \leq \widetilde{c} \varepsilon^2 k,
\]
where $\widetilde{c}\in(0,\infty)$ is another absolute constant. Therefore, we obtain that for some absolute constant $C\in(0,
\infty)$
\begin{align*}
-2C\varepsilon^2 k & \leq \liminf_{n\to\infty}\, \log \Bigg(\frac{e^{-\frac{1}{2}\langle t, \sigma^2(\id_{k\times k}-V(\mu_n)V(\mu_n)^*)t \rangle}  \prod_{j=m+1}^{K_n} \varphi_Y\big(\langle t,C_j^{(n)}\rangle\big)}{e^{-\frac{1}{2}\langle t, \sigma^2(\id_{k\times k}-V(\mu)V(\mu)^*)t \rangle}  \prod_{j=m+1}^N \varphi_Y\big(\langle t,C_j\rangle\big)}\Bigg) \cr
& \leq \limsup_{n\to\infty}\, \log \Bigg(\frac{e^{-\frac{1}{2}\langle t, \sigma^2(\id_{k\times k}-V(\mu_n)V(\mu_n)^*)t \rangle}  \prod_{j=m+1}^{K_n} \varphi_Y\big(\langle t,C_j^{(n)}\rangle\big)}{e^{-\frac{1}{2}\langle t, \sigma^2(\id_{k\times k}-V(\mu)V(\mu)^*)t \rangle}  \prod_{j=m+1}^N \varphi_Y\big(\langle t,C_j\rangle\big)}\Bigg) \leq 2C\varepsilon^2 k.
\end{align*}
Taking the convergence \eqref{eq:product 1 to m} into account, we thus obtain
\begin{align*}
e^{-2C\varepsilon^2 k} & \leq \liminf_{n\to\infty} \Bigg(\frac{e^{-\frac{1}{2}\langle t, \sigma^2(\id_{k\times k}-V(\mu_n)V(\mu_n)^*)t \rangle}  \prod_{j=1}^{K_n} \varphi_Y\big(\langle t,C_j^{(n)}\rangle\big)}{e^{-\frac{1}{2}\langle t, \sigma^2(\id_{k\times k}-V(\mu)V(\mu)^*)t \rangle}  \prod_{j=1}^N \varphi_Y\big(\langle t,C_j\rangle\big)}\Bigg) \cr
& \leq \limsup_{n\to\infty}\Bigg(\frac{e^{-\frac{1}{2}\langle t, \sigma^2(\id_{k\times k}-V(\mu_n)V(\mu_n)^*)t \rangle}  \prod_{j=1}^{K_n} \varphi_Y\big(\langle t,C_j^{(n)}\rangle\big)}{e^{-\frac{1}{2}\langle t, \sigma^2(\id_{k\times k}-V(\mu)V(\mu)^*)t \rangle}  \prod_{j=1}^N \varphi_Y\big(\langle t,C_j\rangle\big)}\Bigg) \leq e^{ 2C\varepsilon^2 k}.
\end{align*}
Since the latter chain of inequalities holds for any $\varepsilon>0$ small enough such that $\mu\big(\partial \B_2^k(0,\varepsilon)\big) = 0$, considering a sequence of such $\varepsilon$ tending to $0$, the desired pointwise convergence of the characteristic functions follows, i.e., $\lim_{n\to\infty}\varphi_n(t)=\varphi(t)$.

Note in passing that the above proof shows absolute convergence of the series
$\sum_{j=1}^N \log \varphi_Y (\langle t,C_j\rangle)$ (if $N=\infty$),
which proves that the distribution of the sum $\sum_{j=1}^N C_j Y_j$ stays invariant under arbitrary signed permutations of the summands.

We have shown that $\Psi: \mathbb W_k \to \mathcal M_1(\R^k)$ is continuous. Since this map is also injective by Corollary~\ref{cor:nu_injective} and $\mathbb W_k$ is compact by Lemma~\ref{lem:W_k_compact}, we conclude that $\Psi$ is a homeomorphism onto its image (which is also compact). Here we  used that a continuous, bijective mapping between a compact space and a Hausdorff space has continuous inverse and  hence is a homeomorphism.
\end{proof}

As a consequence of Lemma~\ref{lem:vague vs weak convergence} we record the following result.

\begin{cor}\label{cor:nu_homeo}
The map $\Psi : \mathbb W_k \to \mathcal M_1(\R^k)$ is a homeomorphism between $\mathbb W_k$ and its image $\mathcal K_{k, Y_1} = \Psi (\mathbb W_k)$, which is a compact subset of $\mathcal M_1(\R^k)$ endowed with the topology of weak convergence.
\end{cor}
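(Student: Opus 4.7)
The plan is to extract this corollary essentially for free from the preceding results, relying on the standard topological fact that a continuous bijection from a compact space to a Hausdorff space is automatically a homeomorphism onto its image. All three ingredients needed for this trick are already in place, so the ``proof'' amounts to identifying them and combining them.

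First, I would record that injectivity of $\Psi$ has been established in Corollary~\ref{cor:nu_injective}, which itself was deduced from the multidimensional Marcinkiewicz-type Lemma~\ref{lem:linear_combinations_multidim}. Hence $\Psi$ is a bijection from $\mathbb W_k$ onto its image $\mathcal K_{k, Y_1} := \Psi(\mathbb W_k)$. Next, I would observe that the ``only if'' direction of Lemma~\ref{lem:vague vs weak convergence} establishes exactly the continuity of $\Psi$ with respect to the vague topology on $\mathbb W_k$ and the weak topology on $\mathcal M_1(\R^k)$; that direction was proven there by showing pointwise convergence of the characteristic functions of $\Psi(\mu_n)$ to that of $\Psi(\mu)$ and invoking L\'evy's continuity theorem. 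Both spaces are metrizable --- the vague topology on $\mathbb W_k$ by a complete separable metric (cf.~the remark following the definition of $\mathbb W_k$), and the weak topology on $\mathcal M_1(\R^k)$ by the L\'evy--Prokhorov distance --- so sequential continuity implies continuity.

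Finally, I would assemble the pieces: $\mathbb W_k$ is compact by Lemma~\ref{lem:W_k_compact}, the target space $\mathcal M_1(\R^k)$ is Hausdorff, and $\Psi$ is a continuous bijection onto its image. Applying the compact-to-Hausdorff trick then yields that $\Psi^{-1}: \mathcal K_{k, Y_1} \to \mathbb W_k$ is continuous, so $\Psi$ is a homeomorphism onto $\mathcal K_{k,Y_1}$. Compactness of $\mathcal K_{k,Y_1}$ follows immediately, since the continuous image of a compact space under $\Psi$ is compact. There is no real obstacle at this stage: all of the analytic and combinatorial work (the Marcinkiewicz-type uniqueness, the vague-weak equivalence, and the compactness of $\mathbb W_k$) has already been carried out, and the corollary itself is simply a topological repackaging. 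One could alternatively avoid the compact-Hausdorff trick entirely by invoking the ``if'' direction of Lemma~\ref{lem:vague vs weak convergence}, which directly yields the continuity of $\Psi^{-1}$ on $\mathcal K_{k,Y_1}$.
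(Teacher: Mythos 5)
Your proposal is correct and follows essentially the same route as the paper: injectivity from Corollary~\ref{cor:nu_injective}, continuity of $\Psi$ from the ``only if'' direction of Lemma~\ref{lem:vague vs weak convergence}, compactness of $\mathbb W_k$ from Lemma~\ref{lem:W_k_compact}, and the continuous-bijection-from-compact-to-Hausdorff trick, which is precisely how the paper concludes at the end of the proof of Lemma~\ref{lem:vague vs weak convergence}. One small caveat on your closing remark: invoking the ``if'' direction of Lemma~\ref{lem:vague vs weak convergence} to avoid the compact--Hausdorff argument is slightly circular, since the paper only proves the ``only if'' direction directly (via characteristic functions) and obtains the ``if'' direction \emph{as a consequence of} the compact--Hausdorff argument.
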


\subsection{Step 6 -- LDP on \texorpdfstring{$\mathbb W_k$}{W\_k}}
The next proposition states an LDP on the space $\mathbb W_k$ for the columns of the random Stiefel matrix. As we shall argue in the next step, it implies the LDP stated in Theorem~\ref{thm:ldp multidimensional projection_product_measures} by the contraction principle (e.g.,\cite[Theorem 4.2.1]{DZ2010}) applied to the homeomorphic mapping $\Psi$ .

\begin{proposition}\label{prop: LDP_for_nu_n}
Let $V_{k,n}$ be uniformly distributed on the Stiefel manifold $\mathbb V_{k,n}$. Denote the columns of $V_{k,n}$ by $C_1(V_{k,n}),\ldots,C_n(V_{k,n})$ and consider the following  random element in $\mathbb W_k$:
$$
\eta_n = \sum_{j=1}^n \left(\delta_{C_j(V_{k,n})} + \delta_{-C_j(V_{k,n})}\right).
$$
Then, the sequence $\eta_n$, $n\geq k$, satisfies an LDP on the compact space $\mathbb W_k$ with speed $n$ and a good rate function $\mathbb J: \mathbb W_k\to[0,+\infty]$ defined by
\begin{align}\label{eq:J_inform_funct_def}
\mathbb J(\mu) =
\begin{cases}
-\frac 12 \log \det (\id_{k\times k} - V(\mu)V(\mu)^*) & :\, \|V(\mu)V(\mu)^*\|<1 \\
+\infty &:\, \|V(\mu)V(\mu)^*\|=1.
\end{cases}
\end{align}
\end{proposition}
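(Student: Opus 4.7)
The plan is to invoke Proposition \ref{prop:basis topology} on a convenient base of the vague topology on $\mathbb W_k$. This is allowed since $\mathbb W_k$ is compact by Lemma \ref{lem:W_k_compact}, so a weak LDP upgrades automatically to a full LDP and goodness of the rate function is automatic. A convenient base is given by sets of the form $U = U_{f_1,\ldots,f_m;\delta} = \{\nu \in \mathbb W_k : |\nu(f_i)-\mu(f_i)|<\delta,\ i=1,\ldots,m\}$ with continuous compactly supported test functions $f_i$ all vanishing on $\{\|x\|_2<\varepsilon\}$ for some common $\varepsilon>0$. The key structural observation is that $\eta_n(f) = \sum_{j=1}^n \tilde f(C_j(V_{k,n}))$ with $\tilde f(x):=f(x)+f(-x)$, so only columns of $V_{k,n}$ of norm at least $\varepsilon$ contribute, and the Stiefel constraint $V_{k,n}V_{k,n}^* = \id_{k\times k}$ caps the number of such ``large'' columns at $N_\varepsilon := \lfloor k/\varepsilon^2\rfloor$. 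The proof reduces the LDP for $\eta_n$ to the LDP for the first $N$ columns $V_{k,n}^{(N)}$ supplied by Lemma \ref{lem:ldp_stiefel_kl}.

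For the upper bound I use exchangeability of the columns of $V_{k,n}$ to decompose $\{\eta_n \in U\}$ according to the (random) index set $T \subset \{1,\ldots, n\}$ of large columns; a union bound gives
\[
\Pro(\eta_n \in U) \leq \sum_{N=0}^{N_\varepsilon} \binom{n}{N}\, \Pro\bigl(V_{k,n}^{(N)} \in E_{N,U}\bigr),
\]
where $E_{N,U} := \{B \in [-1,1]^{k\times N} : \|C_j(B)\|_2 \geq \varepsilon\ \forall j,\ |\sum_{j=1}^N \tilde f_i(C_j(B)) - \mu(f_i)| < \delta\ \forall i\}$. The binomial factors are subexponential, and Lemma \ref{lem:ldp_stiefel_kl} bounds each summand by $\exp(-n\inf_{\overline{E_{N,U}}}\rate_N + o(n))$. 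As $U$ shrinks to $\mu$ (by enriching the family $\{f_i\}$ to a countable separating sequence of test functions on $\{\|x\|_2 \geq \varepsilon\}$ and sending $\delta \downarrow 0$), a compactness argument forces any $B^{(n)} \in E_{N,U^{(n)}}$ to have $\sum_j(\delta_{C_j(B^{(n)})} + \delta_{-C_j(B^{(n)})})$ converging along subsequences to the restriction of $\mu$ to $\{\|x\|_2\geq \varepsilon\}$, so $\liminf \rate_N(B^{(n)}) \geq -\tfrac{1}{2}\log\det(\id_{k\times k} - V^{(\varepsilon)}(\mu)V^{(\varepsilon)}(\mu)^*)$, where $V^{(\varepsilon)}(\mu)$ assembles the atoms of $\mu$ of norm $\geq \varepsilon$ (the minimum over $N$ being achieved when $N$ matches the number of such atoms). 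Sending $\varepsilon \downarrow 0$ and invoking the monotonicity from Lemma \ref{lem:monotone_determinant_in_ell} yields the upper bound $-\mathbb J(\mu)$.

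For the lower bound with a finite measure $\mu = \sum_{j=1}^N(\delta_{C_j}+\delta_{-C_j})$ satisfying $\|V(\mu)V(\mu)^*\| < 1$, the LDP lower bound from Lemma \ref{lem:ldp_stiefel_kl} gives $\liminf_n n^{-1}\log \Pro\bigl(V_{k,n}^{(N)} \in B_r(V(\mu))\bigr) \geq -\mathbb J(\mu)$. Conditional on this event, the remaining $k\times(n-N)$ block of $V_{k,n}$ is distributed as a uniform element of an appropriate restricted Stiefel manifold, and its columns typically have norms of order $1/\sqrt{n-N}$. A direct Markov/union bound, combined with Lemma \ref{lem:ldp_stiefel_kl} applied to a single column, shows that with subexponentially high conditional probability no residual column exceeds the threshold $\varepsilon$, so no spurious atoms appear in $\eta_n$ against the test functions $f_i$. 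Measures $\mu$ with infinitely many atoms are handled by truncations $\mu^{(m)}\nearrow \mu$ vaguely, together with $\mathbb J(\mu^{(m)})\nearrow \mathbb J(\mu)$ from Lemma \ref{lem:monotone_determinant_in_ell}. The boundary case $\|V(\mu)V(\mu)^*\|=1$, where $\mathbb J(\mu)=+\infty$, follows automatically from the upper-bound analysis since $\rate_N \to +\infty$ along any sequence of optimizers with $\|B^{(n)}(B^{(n)})^*\|\to 1$, yielding super-exponential decay.

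The main technical obstacle is the conditional concentration estimate in the lower-bound step: after conditioning on the first $N$ columns of $V_{k,n}$ lying near $V(\mu)$, one must control the probability that none of the remaining $n-N$ columns produces an atom of $\eta_n$ above the threshold $\varepsilon$. This requires explicit handling of the conditional distribution of the tail block of a uniform Stiefel matrix, which is accessible through the Gaussian realization in Lemma \ref{lem:uniform distribution Stiefel manifold} combined with the fact that the rows of this block are orthonormal in a space of dimension $n-N$, forcing their columns to be of order $1/\sqrt{n-N}$.
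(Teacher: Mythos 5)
Your proposal follows essentially the same strategy as the paper's proof: reduce the LDP to the behavior of the finitely many ``large'' columns, apply Proposition~\ref{prop:basis topology} on a base of the vague topology, obtain the upper bound by a subexponential union bound over which indices carry the large columns combined with the inverse matrix variate $t$-density for the corresponding block, and obtain the lower bound by conditioning on the first block and showing the residual columns stay small. The paper uses the ``ball'' base $W_{r,\rho}(\mu)$ from the characterization of vague convergence in~\cite[Proposition~3.13]{R2008} rather than a test-function base, but this is a cosmetic difference.

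The step you flag as the ``main technical obstacle'' is genuinely the crux, and your sketch of it is not quite complete. It is not enough to observe that the tail block sits in a ``restricted Stiefel manifold'' or that it comes from the Gaussian realization: what you actually need is the precise distributional fact, due to Khatri (cited in the paper as~\cite[Lemma~2]{Khatri1970a}; see also~\cite[Theorem~8.2.2]{GN2000}), that conditionally on the first $N$ columns $A_N = A$, the tail block equals $(\id_{k\times k} - AA^*)^{1/2}\widetilde V_{k,n-N}$ where $\widetilde V_{k,n-N}$ is itself uniform on $\mathbb V_{k,n-N}$ and independent of $A_N$. Once this is in hand, the exponential tail bound for a single column of $\widetilde V_{k,n-N}$ (the case $\ell=1$ of the inverse $t$-density, i.e.\ \eqref{eq:V_k_n_submatrix_density} specialized) and a union bound give, as you anticipate, that the conditional probability that no residual column exceeds $r$ tends to $1$. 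A small correction in your closing remark: the rows of the tail block are not orthonormal; they satisfy $\widetilde A_N \widetilde A_N^* = \id_{k\times k} - A_N A_N^*$, so their squared norms are $\le 1$ but not $=1$. This does not invalidate your heuristic that the tail columns are small, but the correct input is the Khatri factorization, not orthonormality of the tail rows. Also, ``Markov'' is the wrong tool for the per-column tail bound --- you need the explicit polynomial-times-$(1-\|x\|_2^2)^{(n-N-k-2)/2}$ density to get exponential decay; Markov alone would not. Beyond these points your proposal's structure, including the use of Lemma~\ref{lem:monotone_determinant_in_ell} to pass to the limit in both bounds and the handling of the boundary case $\|V(\mu)V(\mu)^*\|=1$, matches the paper's argument.
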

\begin{proof}
We shall again draw on Proposition~\ref{prop:basis topology} and work on a base of the topology. Hence, we start by describing an explicit base for the vague topology on $\mathbb W_k$. We denote by $\mathbb B(x, r)$  the open ball (in the metric space $[-1,1]^k$ endowed with the induced Euclidean metric) which has radius $r$ and is centered at $x\in \R^k$. Let $\bar {\mathbb B}(x,r)$ be the  closure of $\mathbb B(x, r)$.

\vspace*{2mm}
\noindent
\textit{Base of topology.} Take some $\mu\in \mathbb W_k$. We shall construct a basis of open neighborhoods of $\mu$ as follows. Take some number $r>0$ and assume that there are no atoms of $\mu$ on the sphere $\{x\in \R^k: \|x\|_2 = r\}$. The restriction of $\mu$ to the complement of the  ball $\bar {\mathbb  B}(0,r)$ can be written as
$$
\mu|_{[-1,1]^k \backslash \bar {\mathbb  B}(0,r)} = \sum_{i=1}^L \left(m_i \delta_{D_i} + m_i \delta_{-D_i}\right)
$$
with the following convention: the total number of atoms of $\mu$ in $\{\|x\|_2>r\}$ is $2m$ with $m:= m_1+\ldots + m_L$. The atoms are located at positions $\pm D_1,\ldots, \pm D_L\in [-1,1]^k \backslash \bar {\mathbb B}(0,r)$, for some $L\in \mathbb N_0$, and the multiplicity of the atom at $D_i$ is equal to $m_i\in \N$, as is the multiplicity of the atom at $-D_i$. Let now $\rho>0$ be so small that the $2L$ balls $\bar{\mathbb B}(D_i, \rho)$ and $\bar{\mathbb B}(-D_i, \rho)$, for $i=1,\ldots, L$, are pairwise disjoint and contained in $[-1,1]^k \backslash \bar {\mathbb B}(0,r)$. Then, we denote by $W_{r,\rho}(\mu)$ the set of all $\mu'\in \mathbb W_k$ such that the following two conditions are satisfied:
\begin{itemize}
\item $\mu'$ has $m_i$ atoms in the ball $\mathbb B(D_i, \rho)$ and $m_i$ atoms in $\mathbb B(-D_i, \rho)$, for every $i=1,\ldots, L$.
\item $\mu'$ has no other atoms in $[-1,1]^k \backslash {\mathbb B}(0,r)$, that is, $\mu'([-1,1]^k \backslash {\mathbb B}(0,r)) = 2m= \mu ([-1,1]^k \backslash {\mathbb B}(0,r))$.
\end{itemize}
Note that the number of the atoms of $\mu'$ in the ball $\mathbb B(0, r)$ (as well as their positions) may be arbitrary. The sets of the form $W_{r,\rho}(\mu)$ with $r>0$ and $\rho>0$ satisfying the conditions listed above form a base of open neighborhoods of $\mu$ for the vague topology on $\mathbb W_k$. This is a well-known characterization of vague convergence; see~\cite[Proposition~3.13]{R2008}.

\vspace*{2mm}
\noindent
\textit{Upper bound.}
In the following we shall verify conditions of Proposition~\ref{prop:basis topology} for the base of topology described above. We start with the (simpler) upper bound. Take some $\mu\in \mathbb W_k$. Our aim is to prove that
\begin{equation}\label{eq:upper_bound_theo_D_statement}
\inf_{r>0, \rho>0} \limsup_{n\to\infty} \frac 1 {n} \log \Pro \left[\eta_n \in W_{r,\rho}(\mu)\right] \leq - \mathbb J(\mu).
\end{equation}
If $\mu=0$, then $\mathbb J(\mu)=0$ and there is nothing to prove. In the following let $\mu\neq 0$. For sufficiently small $r>0$ and $\rho>0$, we would like to bound from above the probability that $\eta_n \in W_{r,\rho}(\mu)$, where $W_{r,\rho}(\mu)$ is defined as above. Recall that $V_{k,n}$ denotes the random Stiefel matrix and let $A_{m}\in \R^{k\times m}$ be the $k\times m$-matrix with columns $C_1(V_{k,n}), \ldots, C_{m}(V_{k,n})$. By the union bound,
\begin{equation}\label{eq:union_bound}
\Pro \left[\eta_n \in W_{r,\rho}(\mu)\right]
\leq
2^L \binom {n}{m_1,\ldots, m_L, n-m_1-\ldots-m_L} \Pro[A_{m} \in O_{r,\rho}(\mu)],
\end{equation}
where $O_{r,\rho}(\mu)$ is the set of $k\times m$-matrices $A\in \R^{k \times m}$ whose columns, denoted by $C_1(A), \ldots C_m(A)$, satisfy the following conditions:
\begin{itemize}
\item $C_i(A)\in \mathbb B(D_1, \rho)$ for all $i=1,\ldots m_1$,
\item $C_i(A)\in \mathbb B(D_2, \rho)$ for all $i=m_1+1,\ldots, m_1+m_2$,
\item $\ldots$
\item $C_i(A)\in \mathbb B(D_L, \rho)$ for all $i=m_1+\ldots+ m_{L-1}+1,\ldots, m_1+\ldots + m_L$.
\end{itemize}
With other words, the set $O_{r,\rho}(\mu)$ is the Cartesian product of Euclidean balls $\otimes_{i=1}^L (\mathbb B(D_i, \rho))^{m_i}$.
The density of the random matrix $A_{m}$ with respect to the Lebesgue measure on $\R^{k\times m}$ is known from~\eqref{eq:V_k_n_decomposition} and~\eqref{eq:V_k_n_submatrix_density} to be
\begin{equation}\label{eq:V_k_n_submatrix_density_repeat}
f_n(A) = \frac{\Gamma_k(\frac{n}{2})}{\pi^{\frac{km}{2}}\Gamma_k(\frac{n-m}{2})}\det\Big(\id_{k\times k} - AA^*\Big)^{\frac{n-m-k-1}{2}},
\qquad
A\in\R^{k\times m},
\qquad
\|AA^*\| \leq 1,
\end{equation}
where $\Gamma_k$ is the multivariate Gamma function given by~\eqref{eq:gamma_function_generalized}.  Combining~\eqref{eq:union_bound} and~\eqref{eq:V_k_n_submatrix_density_repeat}, we arrive at the estimate
$$
\Pro \left[\eta_n \in W_{r,\rho}(\mu)\right] \leq \kappa(n) \int_{O_{r,\rho}(\mu) \cap \{A\in \R^{k\times m}:\, \|AA^*\|\leq 1\}} \det\Big(\id_{k\times k} - AA^*\Big)^{\frac{n-m-k-1}{2}} \dint A,
$$
where $\kappa(n)$ is a certain explicit factor satisfying $\lim_{n\to\infty} \frac 1n \log \kappa(n) = 0$ and is therefore negligible at logarithmic speed $n$.  Estimating the integral by the supremum, we get
$$
\Pro \left[\eta_n \in W_{r,\rho}(\mu)\right] \leq \kappa(n) \cdot \vol_{k\times m}(O_{r,\rho}(\mu))\cdot \left(\sup_{A\in O_{r,\rho}(\mu), \, \|AA^*\|\leq 1} \det\Big(\id_{k\times k} - AA^*\Big)\right)^{\frac{n-m-k-1}{2}},
$$
where $\vol_{k\times m}$ denotes the Lebesgue volume in $\R^{k\times m}$. Taking the logarithm, dividing by $n$, and taking the $\limsup_{n\to\infty}$, we arrive at
$$
\limsup_{n\to\infty} \frac 1n \log \Pro \left[\eta_n \in W_{r,\rho}(\mu)\right]
\leq \frac 12 \sup_{A\in O_{r,\rho}(\mu),\,  \|AA^*\|\leq 1}  \log \det\Big(\id_{k\times k} - AA^*\Big),
$$
where we used that $O_{r,\rho}(\mu)$ has volume not depending on $n$ and, in particular,
$$
\limsup_{n\to\infty} \frac 1n \log \vol_{k\times m}(O_{r,\rho}(\mu)) = 0.
$$
We are now able to complete the proof of the upper bound~\eqref{eq:upper_bound_theo_D_statement}.
Let $\eps>0$ be given. Let first the number of atoms of $\mu$ be infinite. For a sufficiently small $r>0$ (meaning that $m$, which is a function of $r$, is sufficiently large), the $k\times m$-matrix $A_{m}$ is such that $\det (\id_{k\times k} - AA^*)$ is within distance $\eps/2$ from $\det (\id_{k\times k} - V_{k,n}V_{k,n}^*)$; see~\eqref{eq:rate_stiefel_supremum}. If the total number of atoms of $\mu$ is finite, then we can even achieve the exact equality of both determinants by choosing $r$ smaller than the smallest norm of an atom of $\mu$.  Now, in both cases, we use the continuity of the function $A\mapsto \det (\id_{k\times k} - AA^*)$ to choose $\rho>0$ so small that $\det (\id_{k\times k} - AA^*)$ is within distance $\eps/2$ from $\det (\id_{k\times k} - A_mA_m^*)$ for every $A\in O_{r,\rho}(\mu)$. By the triangle inequality, we obtain
$$
\sup_{A\in O_{r,\rho}(\mu),\,  \|AA^*\|\leq 1} \det\Big(\id_{k\times k} - AA^*\Big) \leq  \eps + \det (\id_{k\times k} - V_{k,n}V_{k,n}^*).
$$
Since $\eps>0$ was arbitrary, this completes the proof of~\eqref{eq:upper_bound_theo_D_statement}.

\vspace*{2mm}
\noindent
\textit{Lower bound.}
We now prove the lower bound of Proposition~\ref{prop:basis topology}.  Take some $\mu\in \mathbb W_k$. Our aim is to prove that for all $r>0$, $\rho>0$,
\begin{equation}\label{eq:lower_bound_theo_D_statement}
\liminf_{n\to\infty} \frac 1 {n} \log \Pro \left[\eta_n \in W_{r,\rho}(\mu)\right] \geq - \mathbb J(\mu).
\end{equation}
If $\|V(\mu) V(\mu)^*\| = 1$, then $\mathbb J(\mu) = +\infty$ by~\eqref{eq:J_inform_funct_def} and there is nothing to prove. Hence, we let in the following $\|V(\mu) V(\mu)^*\| < 1$. Using the notation introduced in the proof of the upper bound, we observe that in order for the event $\eta_n \in W_{r,\rho}(\mu)$ to occur, it is sufficient that the first $m$ columns of $V_{k,n}$ (which form the random matrix $A_m$) belong to the balls $\mathbb B(D_i, \rho)$  (where $i=1,\ldots, L$ and the $i$-th ball is counted $m_i$ times), whereas all other columns have a norm less than $r$. More precisely, we  can write
\begin{align*}
\Pro \left[\eta_n \in W_{r,\rho}(\mu)\right]
&\geq
\Pro\left[A_m \in O_{r,\rho}(\mu), \sup_{i=1,\ldots,n-m} \|C_{m+i}(V_{k,n})\|_2 < r\right]\\
&=
\int_{O_{r,\rho}(\mu) \cap \{A\in \R^{k\times m}:\, \|AA^*\|\leq 1\}}
f_n(A) \, \Pro\left[\sup_{i=1,\ldots,n-m} \|C_{m+i}(V_{k,n})\|_2 < r \Big| A_m = A\right] \dint A,
\end{align*}
where $f_n$ is the Lebesgue density of $A_m$ given in~\eqref{eq:V_k_n_submatrix_density_repeat} and $\dint A$ refers to the integration with respect to Lebesgue measure on $\R^{k\times m}$. Assume that $n-m \geq k$, which is justified since later we let $n\to\infty$. Fix some matrix $A\in \R^{k\times m}$ with $\|AA^*\|\leq 1$. Conditionally on $A_m = A$ (that is, conditionally on the first $m$ columns of the Stiefel matrix $V_{k,n}$), the remaining columns $C_{m+1}(V_{k,n}), C_{m+2}(V_{k,n}),\ldots$ form a $k\times (n-m)$-matrix denoted by $\widetilde A_m$ whose distribution is that of $(1-A A^*)^{1/2} \widetilde V_{k, n-m}$, where $\widetilde V_{k, n-m}$ is a random, uniform $k\times (n-m)$-Stiefel matrix. This fact is due to Khatri~\cite[Lemma~2]{Khatri1970a} who proved that $A_m$ and $\widetilde V_{k, n-m}:= (1-A A^*)^{-1/2}\tilde A_m$ are independent, the first matrix being inverse $t$-distributed, while the second one being uniform on $\mathbb V_{k, n-m}$; see also~\cite[Theorem~8.2.2]{GN2000}. The columns of $(1-A A^*)^{1/2} \widetilde V_{k, n-m}$ are just $(1-A A^*)^{1/2}C_1(\widetilde V_{k, n-m}),(1-A A^*)^{1/2}C_2(\widetilde V_{k, n-m}),\ldots$. Hence, we have
\begin{align*}
\Pro \left[\eta_n \in W_{r,\rho}(\mu)\right]
&\geq
\int_{O_{r,\rho}(\mu) \cap \{A\in \R^{k\times m}:\, \|AA^*\|\leq 1\}}
f_n(A) \, \Pro\left[\sup_{i=1,\ldots,n-m}
\|(1-A A^*)^{1/2} C_{i}(\widetilde V_{k,n-m})\|_2 < r \right] \dint A\\
&\geq
\int_{O_{r,\rho}(\mu) \cap \{A\in \R^{k\times m}:\, \|AA^*\|\leq 1\}}
f_n(A) \, \Pro\left[\sup_{i=1,\ldots,n-m}
\|C_{i}(\widetilde V_{k,n-m})\|_2 < r \right] \dint A
\end{align*}
because the operator norm of $(1-A A^*)^{1/2}$ is at most $1$. We now claim that
\begin{equation}\label{eq:proof_D_lim_probab_sup_norm_column}
\lim_{n\to\infty}
\Pro\left[\sup_{i=1,\ldots,n-m}
\|C_{i}(\widetilde V_{k,n-m})\|_2 < r \right]
= 1.
\end{equation}
To prove this claim, observe that by the argument of Section~\ref{subsec:inverted_t_and_LDP_for_submatrix}, see in particular~\eqref{eq:V_k_n_submatrix_density} with $\ell= 1$, the density of the first column $C_{1}(\widetilde V_{k,n-m})$ is given by
\begin{equation}\label{eq:density_first_column_stiefel}
x\mapsto
%\frac{\Gamma_k(\frac{n-m}{2})}{\pi^{\frac{k}{2}}\Gamma_k(\frac{n-1}{2})}
%\det\Big(\id_{k\times k} - xx^*\Big)^{\frac{n-m-k-2}{2}} \mathbb 1_{\{\|x\|_2 <1\}}
%=
\frac{\Gamma_k(\frac{n-m}{2})}{\pi^{\frac{k}{2}}\Gamma_k(\frac{n-m-1}{2})}
(1 - x^2)^{\frac{n-m-k-2}{2}} \mathbb 1_{\{\|x\|_2 <1\}},
\qquad
x\in \R^{k\times 1}\equiv \R^k,
\end{equation}
where we used that the $k\times k$-matrix $xx^*$ appearing in~\eqref{eq:V_k_n_submatrix_density} is $\|x\|_2^2$ times the orthogonal projection onto the line spanned by $x$ and therefore $\det(\id_{k\times k} - xx^*) = 1-\|x\|_2^2$. The normalizing factor in~\eqref{eq:density_first_column_stiefel} grows polynomially in $n$, hence for every fixed $r\in (0,1)$ and every $\eps>0$, we have the exponential decay
$$
\Pro\big[\|C_{1}(\widetilde V_{k,n-m})\|_2 >r\big] = O\big((1-r^2)^{\frac {1-\eps}2 n}\big),
\qquad n\to\infty.
$$
The union bound completes the proof of~\eqref{eq:proof_D_lim_probab_sup_norm_column}.
It follows that for sufficiently large $n\in \N$, the probability in~\eqref{eq:proof_D_lim_probab_sup_norm_column} is larger than $1/2$ and we can write
\begin{align*}
\Pro \left[\eta_n \in W_{r,\rho}(\mu)\right]
&\geq
\frac 12 \int_{O_{r,\rho}(\mu) \cap \{A\in \R^{k\times m}:\, \|AA^*\|\leq 1\}}
f_n(A)  \dint A\\
&\geq
\frac{\Gamma_k(\frac{n}{2})}{2\pi^{\frac{km}{2}}\Gamma_k(\frac{n-m}{2})} \int_{O_{r,\rho}(\mu) \cap \{A\in \R^{k\times m}:\, \|AA^*\|\leq 1\}}
\det\Big(\id_{k\times k} - AA^*\Big)^{\frac{n-m-k-1}{2}}\dint A.
\end{align*}
Denoting the factor in front of the integral by $\kappa'(n)$, we observe that $\lim_{n\to\infty} \frac 1n \log \kappa'(n) = 0$. Recall that the condition $A\in O_{r,\rho}(\mu)$ means that the $m$ columns of $A$ belong to the balls $B(D_i, \rho)$, $i=1,\ldots, L$, where the $i$-th ball appears in the list $m_i$ times. For sufficiently large $n\in\N$, we have $\rho>1/n$ and hence $A\in O_{r,1/n}(\mu)$ implies that $A\in O_{r,\rho}(\mu)$ and also that $\|AA^*\|<1$ (the latter since $\|V(\mu) V(\mu^*)\| <1$). Estimating the integrand by its minimum, we obtain
\begin{align*}
\Pro \left[\eta_n \in W_{r,\rho}(\mu)\right]
&\geq
\kappa'(n)\cdot \vol_{k\times m} (O_{r,1/n}(\mu)) \cdot \left(\inf_{A\in O_{r,1/n}(\mu)} \det \Big(\id_{k\times k} - AA^*\Big)\right)^{\frac{n-m-k-1}{2}}.
\end{align*}
The volume of $O_{r,1/n}(\mu)= \otimes_{i=1}^L (\mathbb B(D_i, 1/n))^{m_i}$ decays polynomially in $n$. Taking the logarithm, dividing by $n$ and letting $n$ to infinity and using the continuity of the function $A\mapsto \det (\id_{k\times k} - AA^*)$, we get
$$
\liminf_{n\to\infty} \Pro \left[\eta_n \in W_{r,\rho}(\mu)\right]
\geq \frac 12 \log  \det (\id_{k\times k} - V_m(\mu) V_m(\mu)^*),
$$
where $V_m(\mu)$ is the $k\times m$- matrix formed by the first $m$ columns of $V(\mu)$ (which are $D_1,\ldots, D_L$ with multiplicities $m_1,\ldots, m_L$).  The right-hand side of the above inequality is $\geq \frac 12 \log  \det (\id_{k\times k} - V(\mu) V(\mu)^*)$, as we have shown in Lemma~\ref{lem:monotone_determinant_in_ell} of Section~\ref{subsec:dawson gaertner approach}, and the proof of the lower bound~\eqref{eq:lower_bound_theo_D_statement} is complete.

\vspace*{2mm}
\noindent
\textit{Completing the proof.} Taking together the upper and lower bounds~\eqref{eq:upper_bound_theo_D_statement} and~\eqref{eq:lower_bound_theo_D_statement}, we can apply Proposition~\ref{prop:basis topology} thereby obtaining a weak LDP for $\eta_n$ on the space $\mathbb W_k$. Since this space is compact by Lemma~\ref{lem:W_k_compact}, the weak LDP already implies the full LDP.
\end{proof}

\subsection{Step 7 -- LDP on \texorpdfstring{$\mathcal M_1(\R^k)$}{M\_1(Rk)}}
We can now complete the proof of Theorem~\ref{thm:ldp multidimensional projection_product_measures}. The random measure $\widetilde \mu_{V_{k,n}}$ appearing there is nothing else but $\Psi(\eta_n)$. Transforming the LDP for $\eta_n$ on the space $\mathbb W_k$ stated in Proposition~\ref{prop: LDP_for_nu_n} to the compact space $\mathcal K_{k, Y_1}= \Psi(\mathbb W_k)\subset \mathcal M_1(\R^k)$ by the map $\Psi$ (which is a homeomorphism between $\mathbb W_k$ and $\mathcal K_{k, Y_1}$, as we have shown in Corollary~\ref{cor:nu_homeo}), we arrive at Theorem~\ref{thm:ldp multidimensional projection_product_measures} since the function $\rate$ appearing there is $\rate(\nu) = \mathbb J(\Psi^{-1}(\nu))$.

\subsection*{Acknowledgment}
We are grateful to Gerold Alsmeyer for drawing our attention to the works of J. V.\ Linnik related to Lemma~\ref{lem:linear_combinations}.
Zakhar Kabluchko has been supported by the German Research Foundation under Germany’s Excellence Strategy EXC 2044 – 390685587, \textit{Mathematics M\"unster: Dynamics - Geometry - Structure} and by the DFG priority program SPP 2265 \textit{Random Geometric Systems}.
Joscha Prochno is supported by the Austrian Science Fund (FWF) Project P32405 \textit{Asymptotic Geometric Analysis and Applications} and by the FWF Project F5513-N26 which  is  a  part  of  the  Special Research  Program  \emph{Quasi-Monte  Carlo  Methods:  Theory  and  Applications}.

%\section{Appendix}

\bibliographystyle{plain}
\bibliography{ldp}

\bigskip
\bigskip
	
	\medskip
	
	\small

	\noindent \textsc{Zakhar Kabluchko:} Faculty of Mathematics, University of M\"unster, Orl\'eans-Ring 10,
		48149 M\"unster, Germany
		
	\noindent
		{\it E-mail:} \texttt{zakhar.kabluchko@uni-muenster.de}
	
		\medskip
	
	\noindent \textsc{Joscha Prochno:} Faculty of Computer Science and Mathematics,
	University of Passau, Dr.-Hans-Kapfinger-Strasse 30, 94032 Passau, Germany
	
	\noindent
	{\it E-mail:} \texttt{joscha.prochno@uni-passau.de}

\end{document}